\theoremstyle{definition}
\newtheorem{theorem}{Theorem}[section]
\newtheorem{proposition}[theorem]{Proposition}
\newtheorem{lemma}[theorem]{Lemma}
\newtheorem{definition}[theorem]{Definition}
\newtheorem{remark}[theorem]{Remark}
\newtheorem{example}[theorem]{Example}
\newtheorem{corollary}[theorem]{Corollary}
\newcommand{\R}{\mathbb{R}}   
\newcommand{\C}{\mathbb{C}}   
\newcommand{\N}{\mathbb{N}}   
\newcommand{\Z}{\mathbb{Z}}    
\newcommand{\calP}{\mathcal{P}}
\newcommand{\monP}{\mathbb{P}_{\mathrm mon}}
\DeclareMathOperator{\lead}{lead}
\newcommand{\SUB}{\mathbb{D}}
\begin{document}

\title{New combinatorial identity for the set of partitions and limit theorems in finite free probability theory}
\author{Octavio Arizmendi, Katsunori Fujie and Yuki Ueda}
\date{}

\address{
Octavio Arizmendi:
Centro de Investigaci\'{o}n en Matem\'{a}ticas, Guanajuato, Mexico}
\email{octavius@cimat.mx}

\address{
Katsunori Fujie\footnote{All aspects of the project were accomplished during his time at Hokkaido university.}:
Department of Mathematics, Kyoto University, Kyoto, Japan}
\email{fujie.katsunori.42m@st.kyoto-u.ac.jp}

\address{
Yuki Ueda:
Department of Mathematics, Hokkaido University of Education, Hokkaido, Japan}
\email{ueda.yuki@a.hokkyodai.ac.jp}

\maketitle

\begin{abstract}
We provide a refined combinatorial identity for the set of partitions of $\{1,\dots, n\}$, which plays an important role in investigating several limit theorems related to finite free convolutions. 
Firstly, we present the finite free analogue of Sakuma and Yoshida's limit theorem. That is, we provide the limit of $\{D_{1/m}((p_d^{\boxtimes_dm})^{\boxplus_dm})\}_{m\in \N}$ as $m\rightarrow\infty$ in two cases: (i) $m/d\rightarrow t$ for some $t>0$, or (ii) $m/d\rightarrow0$.
The second application presents a central limit theorem for finite free multiplicative convolution. We establish a connection between this theorem and the multiplicative free semicircular distributions through combinatorial identities.
Our last result gives alternative proofs for Kabluchko's limit theorems concerning the unitary Hermite and the Laguerre polynomials.
\end{abstract}

\section{Introduction}

In the 1980s, \textit{free probability theory} was initiated by Voiculescu to attack problems for free products of operator algebras. One of the most important concepts in this theory is the notion of \textit{free independence}, a concept that has found relevant applications to operator algebras and random matrices (see, e.g., \cite{MS, NS, VDN92} and references therein).

Recently, \textit{finite free probability} was introduced by Marcus, Spielman and Srivastava \cite{MSS22} in their study of various consequences of the theory of interlacing polynomials.
As the name suggests, this theory provides a link between polynomial convolutions, free probability theory (see \cite{Marcus}) and random matrices (see \cite{MSS22}).
Two important concepts in this theory are the finite free, additive and multiplicative, convolutions of polynomials.

There has been particularly great progress concerning limit theorems for finite free convolutions and combinatorial structures on finite free probability (see, e.g., \cite{AGVP, AP,  K21, K22, Marcus}).
This article contributes new insights on limit theorems for finite free convolutions and their connection to free probability theory.
In the way to prove these theorems, we have discovered a series of combinatorial identities on sums over partitions, which may be of independent interest.
In particular, these identities allow giving new proofs of the recent results by Kabluchko \cite{ K21, K22} using purely combinatorial tools. 

\subsection{Notation}
Let $\C[x]$ be the set of all polynomials with complex coefficients.
For a polynomial $f\in \C[x]$ expressed as $f(x)=a_dx^d+\cdots +a_1x+a_0 \; (a_d \not = 0)$, we define $\deg (f):=d$ and ${\rm lead}(f):=a_d$.
A polynomial $f\in \C[x]$ is said to be {\it monic} if ${\rm lead}(f)=1$.
Also, $f\in \C[x]$ is said to be {\it real-rooted} if all roots of $f$ are in $\R$. The following subsets of $\C[x]$ are often used in this paper:
\begin{itemize}
\item $\C[x]_0:=\{f\in \C[x] \ |\ f(0)=0\}$;
\item $\mathbb{P}_{\mathrm{mon}}(d):=\{p\in \mathbb{C}[x]\ |\ p \text{ is monic and }\deg (p)=d \}$.
\end{itemize}
In this article, we use the following notations for a polynomial $p$ of degree $d$.
\begin{itemize}
  \item {
     For $p \in \C[x]$ expressed as 
  \[
  p(x)=\sum_{i=0}^d a_i x^{d-i},
  \]
  it is often beneficial to use the notation
  \[
  \widetilde{a}_i(p):= (-1)^i \binom{d}{i}^{-1} a_i, \qquad i=0,1,\dots,d,
  \]
  and then $p$ can be represented by
  \begin{align}\label{eq:poly}
    p(x)=\sum_{i=0}^d (-1)^i \binom{d}{i} \widetilde{a}_i(p) x^{d-i}.
  \end{align}
  }
  \item {The empirical root distribution of $p(x) = \prod_{k=1}^d (x-\lambda_k)$ is a probability measure
  \[
    \mu\llbracket p \rrbracket := \frac{1}{d} \sum_{k=1}^d \delta_{\lambda_k}.
  \]
  }
  \item For $c\neq 0$, the dilation of $p\in \mathbb{P}_{\mathrm{mon}}(d)$ is defined as $D_c(p)(x):=c^dp(x/c)$.
  \item For a monic polynomial $p(x)=\prod_{k=1}^d (x-\lambda_k)$ with nonnegative roots,
  \begin{align}\label{eq:phi_a}
  \phi_\alpha (p)(x):=\prod_{k=1}^d (x-\lambda_k^\alpha), \qquad \alpha>0.
  \end{align}
  \end{itemize}

\subsection{Main results}

Let us present the main results of this paper.
First, we derive a new combinatorial identity for sums over partitions.


\begin{theorem}\label{main1}
Let us consider polynomials $f_1,\dots,f_k$ such that $f_i\in \C[x]_0$ with $\deg (f_i)=m_i$ for $i=1,\dots,k$. Then
\begin{equation}
  \sum_{\pi \in \calP(n)} \prod_{i=1}^k \left(\sum_{V\in \pi} f_i(|V|)\right)\mu_n(\pi,1_n)
   = \begin{cases}
  (n-1)! n^{k-1} \prod_{i=1}^k m_i \text{lead}(f_i), & n=\sum_{i=1}^km_i-(k-1),\\
  0, & n>\sum_{i=1}^km_i-(k-1),
  \end{cases}
  \end{equation}
where $\calP(n)$ denotes the set of all partitions of $[n]$ and $1_n:=\{\{1,\dots, n\}\}\in \calP(n)$, and $\mu_n$ is the M\"{o}bius function on $\calP(n)$, see Section 2.1 for details.
\end{theorem}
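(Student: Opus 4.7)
The plan is to reduce to monomial $f_i$'s by multilinearity, reinterpret each factor $\sum_{V \in \pi} |V|^{j_i}$ as a count of tuples, apply M\"obius inversion on $\calP(n)$, and finally recognize the critical case as a count of bipartite spanning trees. Writing $f_i(x) = \sum_{j_i=1}^{m_i} c_{i,j_i} x^{j_i}$ and expanding by multilinearity, it suffices to prove that
\[
S_{\vec j}(n) := \sum_{\pi \in \calP(n)} \prod_{i=1}^k \Bigl(\sum_{V \in \pi} |V|^{j_i}\Bigr)\mu_n(\pi, 1_n)
\]
vanishes for $n > \sum j_i - (k-1)$ and equals $(n-1)!\, n^{k-1} \prod j_i$ when $n = \sum j_i - (k-1)$. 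Since $j_i \le m_i$, the theorem follows: the vanishing case is immediate, and at $n = \sum m_i - (k-1)$ only the top-degree term $j_i = m_i$ (contributing $\operatorname{lead}(f_i)$) survives.

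The crucial observation is that $\sum_{V\in\pi}|V|^{j_i}$ counts tuples $\vec{x}_i \in [n]^{j_i}$ whose entries all lie in a common block of $\pi$. Letting $\sigma_{\vec{x}_i}$ denote the partition of $[n]$ whose unique non-singleton block is the support of $\vec{x}_i$, this condition reads $\sigma_{\vec{x}_i} \le \pi$. Setting $\sigma(\vec{x}) := \sigma_{\vec{x}_1} \vee \cdots \vee \sigma_{\vec{x}_k}$ and expanding gives
\[
\prod_i \sum_{V\in\pi} |V|^{j_i} = \sum_{\vec{x}_1,\dots,\vec{x}_k} \mathbb{1}[\sigma(\vec{x}) \le \pi].
\]
Swapping the order of summation and invoking the standard identity $\sum_{\pi \ge \sigma} \mu_n(\pi, 1_n) = \mathbb{1}[\sigma = 1_n]$ on the partition lattice reduces the problem to
\[
S_{\vec j}(n) = \#\{(\vec{x}_1,\dots,\vec{x}_k) : \sigma(\vec{x}) = 1_n\}.
\]

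The condition $\sigma(\vec{x}) = 1_n$ translates into: the bipartite incidence graph on $[n] \cup [k]$, with $i \in [k]$ joined to each coordinate of $\vec{x}_i$, is connected and spans $[n]$. A connected graph on $n+k$ vertices has at least $n+k-1$ simple edges, and the hyperedge side contributes at most $\sum j_i$ simple edges, so $n \le \sum j_i - (k-1)$, yielding the vanishing. At the boundary $n = \sum j_i - (k-1)$, both inequalities become equalities, forcing each $\vec{x}_i$ to have distinct entries and the incidence graph to be a spanning tree of $K_{n,k}$ with prescribed degree $j_i$ at vertex $i \in [k]$. By the bipartite analogue of Cayley's formula (Scoins' theorem), the number of such trees is $\binom{n-1}{j_1-1,\dots,j_k-1}\,n^{k-1}$, and multiplying by the $\prod_i j_i!$ orderings of each tuple gives $(n-1)!\,n^{k-1}\prod j_i$, as required.

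The main technical hurdle is the final counting step: specifically, the enumeration of bipartite spanning trees of $K_{n,k}$ with prescribed one-sided degrees. A Pr\"ufer-type encoding handles this, and one only needs to verify the degree-sum constraint $\sum j_i = n+k-1$, which is automatic in the boundary case.
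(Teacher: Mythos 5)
Your proposal is correct, and it reaches the theorem by a genuinely different route than the paper. The paper goes through generating functions: it defines $r_k[f](z)$ and $s_k[f](z)$, needs Thomas' polarization identity (Lemma \ref{lem:core}) to extend $s_k[f,\dots,f]$ to a symmetric multilinear $s_k[f_1,\dots,f_k]$, expands in the binomial basis $c_m(x)=\binom{x}{m}$ so that coefficients count tuples of \emph{sets} $(W_1,\dots,W_k)$ covering $[n]$, isolates the essential tuples with $\bigvee_i \tau(W_i)=1_n$, and counts these at the boundary by induction on $k$ through the weighted sets of Proposition \ref{prop:T}. You instead observe that the left-hand side is already manifestly multilinear in $(f_1,\dots,f_k)$ — so no polarization is needed — expand in the monomial basis, and read $\sum_{V\in\pi}|V|^{j_i}$ as counting tuples with repetitions allowed; after the same M\"obius inversion step (Lemma \ref{prop:Mobius on posets}) you reduce to counting tuples with $\sigma(\vec{x})=1_n$, the multiplicity-allowing analogue of the paper's essential tuples. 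Your vanishing argument (a connected graph on $n+k$ vertices needs $n+k-1$ edges, while the coordinate slots bound the number of distinct edges by $\sum_i j_i$) is more transparent than the paper's inductive claim that no essential tuples exist for large $n$, and at the boundary your forced-tree observation identifies the configurations exactly with spanning trees of $K_{n,k}$ with prescribed degrees $j_i$ on the $[k]$ side, decorated by the $\prod_i j_i!$ orderings; your count $\binom{n-1}{j_1-1,\dots,j_k-1}n^{k-1}$ agrees with the paper's $\# S^{(M-(k-1))}_{(m_1,\dots,m_k)}=\binom{M-k}{m_1-1,\dots,m_k-1}(M-(k-1))^{k-1}$ since $M-k=n-1$, so both yield $(n-1)!\,n^{k-1}\prod_i j_i$. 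The one ingredient you assert rather than prove is the degree-constrained bipartite spanning-tree enumeration: this is a standard refinement of Scoins' formula via a Pr\"ufer-type encoding (a useful sanity check: summing it over degree sequences by the multinomial theorem recovers Scoins' total $n^{k-1}k^{n-1}$), but since it carries the entire leading-coefficient computation you should either cite it precisely or include the short proof — for instance the same strip-off-one-part induction on $k$ that the paper uses for Proposition \ref{prop:T} works verbatim. What your approach buys is economy (no generating functions, no polarization, no auxiliary weighted sets); what the paper's buys is slightly more information, namely the polynomiality of $s_k$ and the weighted count \eqref{eq:T}, which do not follow from your boundary analysis alone.
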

{This formula plays a crucial role in investigating the three limit theorems (Theorems \ref{main3}, \ref{main5} and \ref{main4}) outlined below, which establish connections between finite free probability and free probability.}




{Next, we present the finite free analogue of Sakuma and Yoshida's result \cite{SY}, a limit theorem that involves finite free multiplicative and additive convolutions. A detailed description of the findings in \cite{SY} is necessary to articulate the problem.}
Let $\mu$ be a probability measure on $[0,\infty)$ that has the second moment and is not $\delta_0$.
Put $s:=1/m_1(\mu)>0$ and $\alpha=\text{Var}(\mu)/ (m_1(\mu))^2$. Then, Sakuma and Yoshida in \cite[Theorems 9 and 11]{SY} proved that there exists a probability measure $\eta_\alpha$ on $[0,\infty)$, such that
\begin{align}\label{eq:SYlimit}
  D_{s^m/m}((\mu^{\boxtimes m})^{\boxplus m}) \xrightarrow{w} \eta_\alpha,
\end{align}
where $D_c(\rho)(B):=\rho(cB)$ for a probability measure $\rho$ on $\R$, $c\neq0 $ and a Borel set $B$ in $\R$, and also $\xrightarrow{w}$ means the weak convergence.
In addition, it holds for the measure $\eta_\alpha$ that
\[
  \kappa_n(\eta_\alpha)=\frac{(\alpha n)^{n-1}}{n!}, \qquad n\in \N,
\]
where $\kappa_n(\rho)$ is the $n$-th free cumulant of a probability measure $\rho$ on $\R$.
Free cumulants are an important combinatorial tool to treat the free additive and multiplicative convolutions (see \cite{NS} for details).

{As stated in \cite{AP}, it is possible to define the finite free cumulants $\{\kappa_n^{(d)}(p)\}_{n=1}^d$ for $p \in \monP(d)$.
This approach allows treating finite free additive convolution $\boxplus_d$ from a combinatorial perspective.
The definition and detailed facts regarding finite free cumulants are outlined in Section 2.2.}
{Here, we are interested in the limit of the expression given by
\begin{align}\label{eq:Kappa_n(SY)}
\kappa_n^{(d)}\left(D_{1/m}\left((p_d^{\boxtimes_dm})^{\boxplus_dm}\right) \right).
\end{align}
However, it is observed that the limit of \eqref{eq:Kappa_n(SY)} converges to zero as $m\to \infty$ when $d$ is fixed (see Section 4 for details).
To obtain non-trivial limits, 
we consider two regimes under the assumption $\mu\llbracket p_d \rrbracket \xrightarrow{w} \mu$ as $d\to \infty$: (i) $m/d\to t$ for some $t>0$ and (ii) $m/d\to 0$.
Notably, case (i) yields a non-trivial limit for \eqref{eq:Kappa_n(SY)}, but it does not align with the free cumulant of Sakuma and Yoshida's limit distribution (Proposition \ref{thm:m=td}).
The more significant case is (ii), where the limit coincides with the desired free cumulant sequence. We now present the second main result.}


\begin{theorem}\label{main3}
Let us consider $p_d\in \mathbb{P}_{\mathrm{mon}}(d)$ with nonnegative roots such that $\kappa_1^{(d)}(p_d)=1$, and let $\mu$ be a probability measure with compact support. Assume that $\mu\llbracket p_d \rrbracket\xrightarrow{w} \mu$ as $d\rightarrow\infty$. For $n\in \N$, we have
\begin{align*}
\lim_{\substack{d,m\rightarrow \infty\\ m/d\rightarrow 0}}\kappa_n^{(d)} \left(D_{1/m}\left((p_d^{\boxtimes_dm})^{\boxplus_dm}\right)\right) =\frac{(\kappa_2(\mu)n)^{n-1}}{n!},
\end{align*}
i.e. the limit coincides with the $n$-th free cumulant of $\eta_{\kappa_2(\mu)}$.

\end{theorem}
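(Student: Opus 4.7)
The plan is to reduce the left-hand side to a Möbius-weighted sum over partitions via moment-cumulant inversion, and then extract the dominant term using Theorem \ref{main1}. First, by the additivity of $\kappa_n^{(d)}$ under $\boxplus_d$ and its degree-$n$ homogeneity under dilation,
\[
\kappa_n^{(d)}\!\left(D_{1/m}\!\left((p_d^{\boxtimes_d m})^{\boxplus_d m}\right)\right) = m^{1-n}\,\kappa_n^{(d)}\!\left(p_d^{\boxtimes_d m}\right),
\]
so it suffices to identify the asymptotic $m^{n-1}$-coefficient of $\kappa_n^{(d)}(p_d^{\boxtimes_d m})$.

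Next, the multiplicativity $\widetilde{a}_k(p\boxtimes_d q) = \widetilde{a}_k(p)\widetilde{a}_k(q)$ gives $\widetilde{a}_k(p_d^{\boxtimes_d m}) = \widetilde{a}_k(p_d)^m$. Combine this with the finite free moment-cumulant Möbius inversion (developed in \cite{AP}), which takes the form
\[
\kappa_n^{(d)}(p) = K_n(d)\sum_{\pi \in \calP(n)} \mu_n(\pi,1_n)\prod_{V\in\pi}\widetilde{a}_{|V|}(p),
\]
with $K_n(d)\sim (-d)^{n-1}/(n-1)!$ as $d\to\infty$. Setting $L_k := \log\widetilde{a}_k(p_d)$ and expanding the exponential gives
\[
\prod_{V\in\pi}\widetilde{a}_{|V|}(p_d)^m = \sum_{j\ge 0}\frac{m^j}{j!}\Bigl(\sum_{V\in\pi} L_{|V|}\Bigr)^j.
\]
Newton's identities, together with $\mu\llbracket p_d\rrbracket\xrightarrow{w}\mu$ and $\kappa_1^{(d)}(p_d)=1$ (so $\mu$ has mean $1$), yield $\widetilde{a}_k(p_d) = 1 - \binom{k}{2}\kappa_2(\mu)/d + O(1/d^2)$ and hence $L_k = -\binom{k}{2}\kappa_2(\mu)/d + O(1/d^2)$.

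Now apply Theorem \ref{main1}. At leading order $L_{|V|} \approx f(|V|)/d$ with $f(x) = -\kappa_2(\mu)\,x(x-1)/2 \in \C[x]_0$ of degree $2$ and $\mathrm{lead}(f) = -\kappa_2(\mu)/2$. For the $j$-th term of the expansion, Theorem \ref{main1} applied with $j$ identical copies of $f$ (so in its notation $k = j$ and every $m_i = 2$) returns $0$ whenever $n > j+1$, and exactly $(n-1)!\,n^{n-2}(-\kappa_2(\mu)/d)^{n-1}$ when $j = n-1$. Assembling this surviving contribution with the prefactor $K_n(d)\sim (-d)^{n-1}/(n-1)!$ and dividing by $m^{n-1}$,
\[
m^{1-n}\,\kappa_n^{(d)}\!\left(p_d^{\boxtimes_d m}\right) \longrightarrow \frac{\kappa_2(\mu)^{n-1}n^{n-2}}{(n-1)!} = \frac{(\kappa_2(\mu)n)^{n-1}}{n!},
\]
which is the claimed limit.

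The main obstacle is controlling the terms not resolved directly by Theorem \ref{main1}. Contributions with $j\ge n$ escape the vanishing clause, but combining the $d^{n-1}$ from $K_n(d)$, the $d^{-j}$ scaling of $\prod L_{|V|}$, and the $m^j$ from the exponential expansion produces a net factor $(m/d)^{j-n+1}\to 0$. Likewise, the $O(1/d^2)$ subleading corrections to $L_k$ produce polynomials in $|V|$ of higher degree; a degree-counting argument, once more appealing to the vanishing clause of Theorem \ref{main1} for the relevant mixed-type products, shows that each such correction carries at least one extra factor of $m/d$. The regime $m/d\to 0$ (rather than $m/d\to t > 0$) is essential here: only the quadratic leading term in $L_k$, through the $j = n-1$ contribution, survives, and the appearance of just $\kappa_2(\mu)$ (rather than higher free cumulants of $\mu$) is a central-limit-type phenomenon enforced by this quadratic leading term.
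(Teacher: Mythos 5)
Your proposal follows the same route as the paper's proof of this statement (Theorem \ref{thm:mainresult}): reduce to $m^{1-n}\kappa_n^{(d)}(p_d^{\boxtimes_d m})$ via additivity of finite free cumulants and dilation homogeneity, expand $\widetilde{a}_\pi(p_d)^m$ in powers of $1/d$ using Lemma \ref{lem:finite_free_multi} and the cumulant formula, and extract the $m^{n-1}d^{-(n-1)}$ coefficient by the combinatorial identity --- your application of Theorem \ref{main1} with $j=n-1$ copies of the quadratic $f(x)=-\kappa_2(\mu)x(x-1)/2$ is exactly the paper's use of Equation \eqref{eq:chV/2} after a multinomial expansion, and your leading-term arithmetic is correct (note only that the prefactor is exactly $(-d)^{n-1}/(n-1)!$, not merely asymptotically, and that the expansion naturally carries $\kappa_2^{(d)}(p_d)\to\kappa_2(\mu)$). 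The only organizational difference is that you expand $\exp(mL_k)$ where the paper expands $(1+g)^m$ binomially inside the formal-series ring $\SUB$; these are equivalent bookkeeping devices.

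The substantive weak point is your error control, where there is a genuine gap. A term with $j$ log-factors of total $d$-weight $L=\sum_i l_i$ has relative size $m^{j-(n-1)}d^{(n-1)-L}$ after normalization. First, your claim that each subleading correction ``carries at least one extra factor of $m/d$'' is not the right statement: when $L<n-1$ and all $l_i=1$ (so $j=L$), the relative size is $(d/m)^{(n-1)-L}\to\infty$, so such terms must vanish \emph{identically}, not merely be small; they do vanish by Theorem \ref{main1} since all factors are quadratic. Second, for mixed terms with some $l_i\ge 2$ and $L<n-1$, invoking the vanishing clause requires knowing that the coefficient $g_l$ of $d^{-l}$ in $L_k=\log\widetilde{a}_k(p_d)$ is a polynomial in $k$ of degree at most $l+1$. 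This is true but needs proof: naive bounds give degree $2l$ (e.g.\ the $d^{-2}$ coefficient of $\widetilde{a}_k(p_d)$ contains the genuinely degree-$4$ term $3\binom{k}{4}\kappa_2^{(d)}(p_d)^2$, which cancels against $\tfrac12 b_1^2$ only after taking the logarithm), and with degree $2l$ the vanishing clause fails for, say, $j=1$, $L=2$, $n=4$, whose relative size $d/m^2$ is not controlled by $m/d\to 0$ alone. You assert this degree-counting but never establish the cancellation. The paper sidesteps the issue entirely: by the second expression in Proposition \ref{prop:cumulant_formula}, the join condition $\sigma_1\vee\cdots\vee\sigma_m=1_n$ forces $\sum_i(n-|\sigma_i|)\ge n-1$, so the coefficient $q_l(m)$ of $d^{-l}$ vanishes identically for all $l\le n-2$ and every $m$, and the remaining terms are handled by the degree bound $\deg_m q_l\le l$ from Lemmas \ref{lem:subring} and \ref{lem:power}, yielding the $O(1/m)+O(m/d)$ remainder. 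To complete your write-up, either prove the bound $\deg_k g_l\le l+1$ or import this join-condition vanishing.
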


 {As our third main result, we establish a central limit theorem for finite free multiplicative convolution of polynomials with nonnegative roots. Additionally, we explore its connection to free probability through the utilization of the first combinatorial identities (Theorem \ref{main1}).}
\begin{theorem}\label{main5}
\begin{enumerate}[\rm (1)]
\item (Central Limit Theorem) Let $d \ge 2$. Suppose $p(x)=\prod_{k=1}^d (x-e^{\theta_k})$ such that $\frac{1}{d}\sum_{k=1}^d \theta_k=0$ and $\frac{1}{d}\sum_{k=1}^d \theta_k^2 =\sigma^2$. Then we have
\begin{align*}
\lim_{m\rightarrow \infty} \phi_{1/\sqrt{m}}(p)^{\boxtimes_d m} = I_d\left(x; \frac{d\sigma^2}{d-1}\right),
\end{align*}
 {where}
\[
I_d(x;t):=\sum_{k=0}^d (-1)^k \binom{d}{k}\exp\left(\frac{k(d-k)}{2d}t\right)x^{d-k}, \qquad t \ge 0.
\]

\item As $d\rightarrow\infty$, we have 
\[
\mu\llbracket I_d(x;t)\rrbracket \xrightarrow{w} \lambda_t,
\] 
where $\lambda_t$ is the {\it multiplicative free semicircular distribution} (see \cite[Proposition 5]{Biane95} or Section 5 for details).
\end{enumerate}
\end{theorem}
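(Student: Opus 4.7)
The plan is to pass to the level of the coefficients $\tilde a_k(\cdot)$ and to use the combinatorial identity of Theorem \ref{main1} as the main engine; the two assertions have rather different characters, so I treat them in turn.

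For part (1), the finite free multiplicative convolution $\boxtimes_d$ admits a description at the coefficient level, with $(x-1)^d$ as the multiplicative identity. The scaled polynomial $\phi_{1/\sqrt m}(p)$ has roots $e^{\theta_j/\sqrt m}$, so
\[
\tilde a_k(\phi_{1/\sqrt m}(p))=\binom{d}{k}^{-1}e_k\!\left(e^{\theta_1/\sqrt m},\dots,e^{\theta_d/\sqrt m}\right)=1+\frac{c_k^{(1)}}{\sqrt m}+\frac{c_k^{(2)}}{m}+O(m^{-3/2}),
\]
with $c_k^{(1)}=0$ forced by $\sum_j \theta_j=0$ and $c_k^{(2)}$ a function of $\sigma^2$ alone. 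The next step is to express $\tilde a_k(\phi_{1/\sqrt m}(p)^{\boxtimes_d m})$ as a sum indexed by partitions of an $m$-element set, with the block sizes recording how many of the $m$ factors contribute at a given order in $1/\sqrt m$. In the limit $m\to\infty$ only certain block-size statistics survive, and Theorem \ref{main1}, applied to a suitable choice of $f_1,\dots,f_k\in\C[x]_0$ (naturally of degree $2$ because of the quadratic scaling), collapses those sums to a closed form. The resulting $k$-th coefficient is precisely $\exp(k(d-k)\sigma^2/(2(d-1)))$, matching $\tilde a_k(I_d(x;d\sigma^2/(d-1)))$; the factor $d/(d-1)$ emerges from the $(n-1)!\,n^{k-1}$ factor in Theorem \ref{main1} after normalisation.

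The main obstacle in (1) is the bookkeeping: one must simultaneously track the Taylor expansion of each factor in $1/\sqrt m$, the combinatorial structure of $\boxtimes_d$, and the partition sum targeted by Theorem \ref{main1}. I expect that only first- and second-order terms in $1/\sqrt m$ matter, with the remaining tail controlled by uniform boundedness of the rescaled coefficients for fixed $d$.

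For part (2), I would proceed by moments. From $\tilde a_k(I_d(x;t))=\exp(k(d-k)t/(2d))$ and Newton's identities, each moment $m_n(\mu\llbracket I_d\rrbracket)=d^{-1}\sum_j \lambda_j^n$ is an explicit polynomial in the $\tilde a_k$'s with $k\le n$. Sending $d\to\infty$ with $n$ fixed, each $\tilde a_k(I_d(x;t))$ tends to $e^{kt/2}$, and extracting the dominant $d$-dependence yields a limiting moment sequence, which I would match to the closed-form moments of Biane's multiplicative free semicircular $\lambda_t$ (computable via its $S$-transform or via sums over non-crossing partitions). Here too Theorem \ref{main1} should be the combinatorial mechanism that identifies the emerging partition sums with these free moments, so that the two parts of the theorem share essentially the same combinatorial backbone.
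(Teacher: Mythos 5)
Your proposal has a genuine gap in both parts. For part (1), the partition-sum mechanism you propose does not correspond to how $\boxtimes_d$ acts: there is no sum over partitions of the $m$ factors at all, because by Lemma \ref{lem:finite_free_multi} the normalized coefficients are \emph{multiplicative}, $\widetilde{a}_k(p^{\boxtimes_d m})=\widetilde{a}_k(p)^m$. Once this is known, the whole proof is the scalar limit $\bigl(1+\tfrac{c_k}{m}+O(m^{-3/2})\bigr)^m\to e^{c_k}$, and Theorem \ref{main1} plays no role whatsoever in part (1). Your specific claim that the factor $d/(d-1)$ ``emerges from the $(n-1)!\,n^{k-1}$ factor in Theorem \ref{main1}'' is false: in the paper it comes from elementary binomial algebra, namely the constraint $0=\bigl(\sum_{j}\theta_j\bigr)^2=d\sigma^2+2\sum_{j_1<j_2}\theta_{j_1}\theta_{j_2}$ combined with $\binom{d-1}{k-1}-\binom{d-2}{k-2}=\binom{d-2}{k-1}$ and $\binom{d-2}{k-1}\big/\binom{d}{k}=k(d-k)/(d(d-1))$, which give $\widetilde{a}_k(\phi_{1/\sqrt{m}}(p))=1+\frac{k(d-k)}{2(d-1)m}\sigma^2+O(m^{-3/2})$ and hence the limit $\exp\bigl(k(d-k)\sigma^2/(2(d-1))\bigr)$ directly.

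For part (2), your moment/Newton-identity route is only sketched, and the step ``each $\widetilde{a}_k(I_d(x;t))$ tends to $e^{kt/2}$, then extract the dominant $d$-dependence'' would fail if taken at face value: replacing each coefficient by its limit $e^{kt/2}$ produces exactly $\sum_{k=0}^d(-1)^k\binom{d}{k}e^{kt/2}x^{d-k}=(x-e^{t/2})^d$, whose empirical root distribution converges to $\delta_{e^{t/2}}$, not to $\lambda_t$. The entire shape of $\lambda_t$ is carried by the $O(1/d)$ correction in $\widetilde{a}_k(I_d(x;t))=e^{kt/2}\exp(-k^2t/(2d))$, and this correction must survive the massive cancellations hidden in Newton's identities; you supply no mechanism to control that. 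The paper sidesteps this by working with finite free cumulants via \eqref{def:cumulant}: one has $\widetilde{a}_\pi(I_d(x;t))=e^{tn/2}\exp\bigl(-\tfrac{t}{2d}\sum_{V\in\pi}|V|^2\bigr)$, and after expanding the exponential, the identity \eqref{eq:V^2} (a corollary of Theorem \ref{main1} with $f_i(x)=x^2$) annihilates all terms of order $d^{-k}$ with $k<n-1$ in the M\"{o}bius sum and evaluates the $k=n-1$ term as $2^{n-1}(n-1)!\,n^{n-2}$, giving $\kappa_n^{(d)}(I_d(x;t))\to e^{nt/2}(nt)^{n-1}/n!$; this is matched with $\kappa_n(\lambda_t)$ (Proposition \ref{prop:Cumulant_sigma_t}, via Lagrange inversion of the $S$-transform) and upgraded to weak convergence by Proposition \ref{prop:converges_freecumulant}. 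So your instinct that Theorem \ref{main1} is the combinatorial backbone is correct for part (2), but it operates through the cumulant formula rather than through moments, and not at all in part (1).
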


 {As the last main result, we provide alternative proofs for Kabluchko's two limit theorems by using the first combinatorial identities again.}

\begin{theorem}\label{main4}
\begin{enumerate}[\rm (1)]  
\item (Kabluchko \cite{K22}) Let us define $H_d(z;t)$ as the unitary Hermite polynomial. Then we have
\begin{align*}
\mu\llbracket H_d(z;t)\rrbracket \xrightarrow{w} \sigma_t, \qquad d\rightarrow\infty,
\end{align*}
where $\sigma_t$ is the free unitary normal distribution (see \cite{Biane95} and \cite[Lemma 6.3]{BV92}).

\item (Kabluchko \cite{K21}) Let us define $L_{d,m}(z)$ as the unitary Laguerre polynomial. Then we have
\begin{align*}
\mu\llbracket L_{d,m}\rrbracket\xrightarrow{w} \Pi_t, \qquad d\rightarrow\infty,
\end{align*}
where $\Pi_t$ is the free unitary Poisson distribution (see \cite{K21} and \cite[Lemma 6.4]{BV92}).
\end{enumerate}
\end{theorem}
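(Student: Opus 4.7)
The plan is to prove Theorem \ref{main4} via moment convergence: I would show that every complex moment $\int z^n \, d\mu\llbracket H_d(z;t)\rrbracket(z)$ converges as $d \to \infty$ to the corresponding moment of $\sigma_t$, and analogously for $L_{d,m}$ and $\Pi_t$. The unitary Hermite and unitary Laguerre polynomials each admit explicit coefficient expansions of the shape $\sum_k (-1)^k \binom{d}{k} w(k,d)\, z^{d-k}$, with a weight $w(k,d)$ of exponential type depending on the parameters, so in the notation of \eqref{eq:poly} their normalized coefficients $\widetilde{a}_i$ are tractable closed-form functions of $i$ and $d$.

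My approach would proceed in three steps. First, for $p \in \monP(d)$ I would express $\int z^n \, d\mu\llbracket p\rrbracket(z)$ through a finite moment-cumulant formula, writing it as a weighted sum over $\pi \in \calP(n)$ of products of finite free cumulants $\kappa_{|V|}^{(d)}(p)$ over blocks $V \in \pi$, against the M\"obius coefficient $\mu_n(\pi,1_n)$ and the standard $d$-dependent normalisation from Section 2.2. Second, I would substitute the explicit finite free cumulants of $H_d(z;t)$ (respectively $L_{d,m}(z)$) read off from their $\widetilde{a}_i$. After Taylor-expanding the exponential weights, each summand should factor as a product $\prod_{i=1}^k \sum_{V \in \pi} f_i(|V|)$ for specific polynomials $f_i \in \C[x]_0$ encoding the block-size dependence, so that the expression is placed exactly in the form of the left-hand side of Theorem \ref{main1}.

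Third, Theorem \ref{main1} produces a drastic collapse: only those tuples $(m_1,\dots,m_k)$ with $n = \sum_{i=1}^k m_i - (k-1)$ contribute, yielding the closed-form value $(n-1)!\, n^{k-1} \prod_{i=1}^k m_i\, \mathrm{lead}(f_i)$. Summing these surviving terms over $k$ and admissible tuples, and sending $d \to \infty$, should reproduce the free moment-cumulant expansion of $\sigma_t$ (respectively $\Pi_t$), whose free cumulants on the unit circle are explicitly recorded in \cite{Biane95,BV92}; matching these two expansions coefficient by coefficient completes the convergence.

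The main obstacle will be the identification of the polynomials $f_i$: one must verify that the Taylor expansion of the exponential weights in $H_d(z;t)$ and $L_{d,m}(z)$ produces precisely those $f_i \in \C[x]_0$ whose leading coefficients, once Theorem \ref{main1} is applied, assemble into the known free cumulants of $\sigma_t$ and $\Pi_t$. A secondary technical point is uniform control of the $1/d$ corrections from terms with $n < \sum m_i - (k-1)$, in order to justify the passage to the limit; this bookkeeping is likely more delicate in the Laguerre case because of the additional parameter $m$ entering the weight.
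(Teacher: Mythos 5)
Your proposal is correct and takes essentially the same route as the paper: the paper likewise computes $\kappa_n^{(d)}(H_d(z;t))$ and $\kappa_n^{(d)}(L_{d,m})$ from the explicit $\widetilde{a}_k$'s, Taylor-expands the exponential weights so that Theorem \ref{main1} (via the polynomials $f_l(x)=(2x)^l/l$ in the Laguerre case, and Equation \eqref{eq:V^2} in the Hermite case) collapses the partition sums to exactly the tuples with $\sum_i m_i-(k-1)=n$, with the lower-order terms absorbed into $O(1/d)$. The only cosmetic differences are that the paper packages your moment-matching step as the cumulant-convergence criterion (Proposition \ref{prop:converges_freecumulant}) and, since \cite{BV92} records only the $S$-transforms rather than the free cumulants of $\sigma_t$ and $\Pi_t$, derives those cumulants itself via the Lagrange inversion theorem in Appendix A.
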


\subsection{Organization of the paper}

 {The remainder of the paper comprises five sections and an appendix. In Section 2, we recall the fundamentals of partitions and combinatorial concepts.
Additionally, the definitions of finite free convolution and finite free cumulant are introduced and discussed for their various known properties.
Furthermore, we provide a formula for the finite free cumulant of finite free multiplicative convolution.}
In Section 3, we prove the combinatorial identities presented in Theorem \ref{main1}. 
In Section 4, we investigate the limit behavior of $D_{1/m}((p_d^{\boxtimes_dm})^{\boxplus_dm})$, as $d,m\rightarrow\infty$ in two cases: (i) $m/d\rightarrow t$ for some $t>0$ and (ii) $m/d\rightarrow0$ (see Theorem \ref{main3}).
In Section 5, we establish the central limit theorem for finite free multiplicative convolution. Additionally, we find its connection to free probability (see Theorem \ref{main5}).
In Section 6, we provide alternative proofs of Kabluchko's two limit theorems (see Theorem \ref{main4}). The key to the proofs in Sections 4--6 lies in the utilization of combinatorial formulas derived from Theorem \ref{main1}. 
In Appendix A, we compute the free cumulants of the multiplicative free semicircular distribution $\lambda_t$, the free unitary normal distribution $\sigma_t$ and the free unitary Poisson distribution $\Pi_t$ via the Lagrange inversion theorem.


\section{Preliminaries}

\subsection{M\"{o}bius functions on partially ordered sets}

In this section, we summarize useful results on the M\"{o}bius functions on partially ordered sets. A {\it partially ordered set} (for short, poset) is a set equipped with a partial order.
More precisely, a pair $P=(P,\le)$ is called a poset if $P$ is a set and $\le $ is a relation on $P$, that is, reflexive, antisymmetric and transitive.
A poset $P$ is said to be finite if the number of elements in $P$ is finite. 


\begin{example}\label{ex:posets}
\begin{enumerate}[\rm (1)]
\item Define $\mathcal{B}(n)$ as the set of all subsets of $[n]$. The set $\mathcal{B}(n)$ can be equipped with the following partial order $\le$: 
\begin{align*}
  A \le B \overset{\text{def}}{\Longleftrightarrow} A\subset B \quad \text{($A$ is a subset of $B$)},
\end{align*}
for $A, B\in \mathcal{B}(n)$.
Therefore $\mathcal{B}(n)=(\mathcal{B}(n),\le )$ is a finite poset. 
It is easy to verify that the minimum and maximum elements of $\mathcal{B}(n)$ are $\emptyset$ and $[n]$, respectively.

\item We call $\pi=\{V_1,\dots, V_r\}$ a {\it partition} of the set $[n]$ if it satisfies that
\begin{enumerate}[\rm (i)]
  \item $V_i$ is a non-empty subset of $[n]$ for all $i=1,2,\dots, r$;
  \item $V_i\cap V_j=\emptyset$ if $i\neq j$;
  \item $V_1\cup \cdots \cup V_r=[n]$.
\end{enumerate}
Each subset $V_i$ is called a {\it block} of $\pi$ and $|V_i|$ denotes the number of elements in $V_i$, namely the {\it size} of $V_i$.

Let $\calP(n)$ be the set of all partitions in $[n]$. The set $\calP(n)$ can be equipped with the following partial order $\le$:
\begin{align*}
  \pi \le \sigma \overset{\text{def}}{\Longleftrightarrow} \text{each block of $\pi$ is completely contained in one of the blocks of $\sigma$}.
\end{align*}
Then $\mathcal{P}(n)=(\mathcal{P}(n),\le )$ is a finite poset. The minimum and maximum elements of $\calP(n)$ are given by $0_n:=\{\{1\},\{2\},\dots, \{n\}\}$ and $1_n:=\{\{1,2,\dots,n\}\}$, respectively.
\end{enumerate}
\end{example}

Let $P=(P,\le )$ be a finite poset. Denote $P^{(2)}:=\{(\pi,\sigma)\in P\times P \ |\ \pi\le \sigma\}$. For $F, G:P^{(2)}\rightarrow \C$, their convolution $F \ast G$ is defined as the function from $P^{(2)}$ to $\C$ by setting
\[
  (F \ast G)(\pi,\sigma):=\sum_{\substack{\rho \in P \\ \pi \le \rho \le \sigma}} F(\pi,\rho) G(\rho,\sigma).
\]
The {\it zeta function} $\zeta_P:P^{(2)}\rightarrow \C$ of $P$ is defined by
\[
  \zeta_P(\pi,\sigma)=1, \qquad (\pi,\sigma)\in P^{(2)}.
\]
The {\it M\"obius function} $\mu_P$ of $P$ is defined as the inverse of $\zeta_P$ with respect to the convolution $\ast$. 

The following inversion principle is one of the most important properties of incidence algebras.

\begin{proposition}[M\"{o}bius inversion formula]\label{prop:Mobius}
Let $P$ be a finite poset. Then there is a unique M\"{o}bius function $\mu_P: P^{(2)} \rightarrow \Z$ such that, for any functions $f,g:P\rightarrow \C$ and $\pi\in P$, the identify
\[
f(\pi)=\sum_{\sigma\le \pi} g(\sigma)
\]
holds, if and only if
\[
g(\pi)=\sum_{\sigma\le \pi} f(\sigma)\mu_P(\sigma,\pi).
\]
\end{proposition}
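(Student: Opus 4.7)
The plan is to split the proposition into two logically separate assertions and handle them in turn: first, that a function $\mu_P\colon P^{(2)}\to \Z$ satisfying $\zeta_P \ast \mu_P = \mu_P \ast \zeta_P = \delta$ (where $\delta(\pi,\sigma)=1$ if $\pi=\sigma$ and $0$ otherwise) exists and is unique; second, that the biconditional in the statement is equivalent to this inverse relation, whence the same existence and uniqueness carry over.

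For the first assertion I would argue by recursion on intervals. Fix $\rho \in P$. The condition $(\mu_P \ast \zeta_P)(\rho,\pi)=\delta_{\rho,\pi}$ forces $\mu_P(\rho,\rho)=1$, and for each $\pi$ with $\rho<\pi$ it forces
\[
\mu_P(\rho,\pi)=-\sum_{\rho\le\sigma<\pi}\mu_P(\rho,\sigma).
\]
Because $P$ is finite, one can process the values $\pi$ in any linear extension of $\le$; the recursion is therefore well-defined, integer-valued by induction, and uniquely determined. A parallel downward recursion starting from $\mu_P(\pi,\pi)=1$ produces a right inverse of $\zeta_P$, and the standard incidence-algebra argument (left and right inverses of the same element in an associative algebra coincide) identifies it with the function just built. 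Thus $\mu_P$ exists, is integer-valued, and is the unique two-sided convolution inverse of $\zeta_P$.

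For the second assertion I would prove the forward implication by interchanging the order of summation. Assuming $f(\pi)=\sum_{\sigma\le\pi}g(\sigma)$,
\begin{align*}
\sum_{\sigma\le\pi} f(\sigma)\mu_P(\sigma,\pi)
&=\sum_{\sigma\le\pi}\Bigl(\sum_{\rho\le\sigma} g(\rho)\Bigr)\mu_P(\sigma,\pi)\\
&=\sum_{\rho\le\pi} g(\rho)\sum_{\rho\le\sigma\le\pi}\mu_P(\sigma,\pi)\\
&=\sum_{\rho\le\pi} g(\rho)\,\delta_{\rho,\pi}=g(\pi),
\end{align*}
where the third equality uses $\zeta_P\ast\mu_P=\delta$. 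The converse implication is symmetric, using $\mu_P\ast\zeta_P=\delta$ in place of its companion. Finally, to see that $\mu_P$ is the unique function for which the biconditional holds for every pair $(f,g)$, I would specialize to $g=\mathbf{1}_{\{\rho\}}$ for each $\rho\in P$: the corresponding $f=\zeta_P(\rho,\cdot)$ is determined, and the required identity $g(\pi)=\sum_{\sigma\le\pi}f(\sigma)\mu_P'(\sigma,\pi)$ forces $\mu_P'$ to coincide with $\mu_P$ entry by entry.

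The main obstacle, such as it is, lies in the bookkeeping for the recursion and the verification that a left inverse is automatically a right inverse inside the incidence algebra; once that is secured, the bi-implication is a one-line swap of sums, and uniqueness follows by testing the biconditional on indicator functions. No other subtleties arise, and the argument is entirely self-contained within the framework set up in Section~2.1.
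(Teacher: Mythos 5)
Your proof is correct and complete. Note, however, that the paper does not actually prove this proposition: it treats Möbius inversion as a classical fact of incidence-algebra theory, having already \emph{defined} $\mu_P$ as the convolution inverse of $\zeta_P$, and implicitly defers to the standard literature (e.g.\ \cite{NS}). Against that backdrop, your argument supplies exactly what the paper leaves implicit, and it does so along the standard textbook route: (i) the recursion $\mu_P(\rho,\rho)=1$, $\mu_P(\rho,\pi)=-\sum_{\rho\le\sigma<\pi}\mu_P(\rho,\sigma)$ shows a one-sided inverse of $\zeta_P$ exists, is unique, and is integer-valued (a point the paper's definition of $\mu_P$ as ``the inverse of $\zeta_P$'' does not by itself make visible, though the statement asserts $\mu_P\colon P^{(2)}\to\Z$); (ii) left and right inverses coincide in the associative unital incidence algebra; (iii) both directions of the biconditional follow from a single interchange of summation, using $\zeta_P\ast\mu_P=\delta$ one way and $\mu_P\ast\zeta_P=\delta$ the other; and (iv) testing the biconditional on the indicator functions $g=\mathbf{1}_{\{\rho\}}$ pins down $\mu_P$ entry by entry, which correctly separates the uniqueness asserted in the proposition from the uniqueness of the convolution inverse. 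All steps check out, including the slightly delicate point that the indicator test forces $(\zeta_P\ast\mu_P')(\rho,\pi)=\delta_{\rho,\pi}$ and hence $\mu_P'=\mu_P$ by downward recursion in the first argument; the only cosmetic caveat is that $f=\zeta_P(\rho,\cdot)$ should be read as extended by zero off the set $\{\sigma \mid \rho\le\sigma\}$, since $\zeta_P$ is defined only on $P^{(2)}$.
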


The following formula on the M\"{o}bius functions is often used in this paper.
\begin{lemma}\label{prop:Mobius on posets}
Let $P$ be a finite poset with the maximum $1_P$ and $\mu_P$ the M\"{o}bius function on $P$. Then the following identity holds:
\begin{align*}
\sum_{\pi \le \sigma} \mu_P(\sigma, 1_P)=\delta_{\pi, 1_P}:=\begin{cases}
1, & \pi=1_P,\\
0, & \pi\neq 1_P.
\end{cases}
\end{align*}
\end{lemma}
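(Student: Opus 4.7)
The plan is to derive the stated identity as an instance of the two-sided convolution-inverse property of the M\"obius function. By the characterization given in the text, $\mu_P$ is the inverse of $\zeta_P$ under convolution, which amounts to the one-sided identity $\mu_P * \zeta_P = \delta_P$, where $\delta_P(\pi,\sigma) = \mathbf{1}[\pi = \sigma]$ is the unit of the incidence algebra. A standard fact about the incidence algebra of a finite poset is that any one-sided convolution inverse is automatically two-sided, so that also $\zeta_P * \mu_P = \delta_P$. Equivalently, $\mu_P$ satisfies the dual recursion $\mu_P(\sigma,\sigma) = 1$ and $\mu_P(\pi,\sigma) = -\sum_{\pi < \rho \le \sigma} \mu_P(\rho,\sigma)$ for $\pi < \sigma$, which encodes precisely $\zeta_P * \mu_P = \delta_P$.

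Once this is in hand, specializing the identity $(\zeta_P * \mu_P)(\pi, 1_P) = \delta_P(\pi,1_P)$ and unwinding the convolution gives
\begin{equation*}
\sum_{\pi \le \sigma \le 1_P} \zeta_P(\pi,\sigma)\, \mu_P(\sigma, 1_P) \;=\; \delta_P(\pi, 1_P).
\end{equation*}
Since $\zeta_P \equiv 1$ and the constraint $\sigma \le 1_P$ is automatic (as $1_P$ is the maximum of $P$), the left-hand side collapses to $\sum_{\pi \le \sigma} \mu_P(\sigma, 1_P)$, while the right-hand side is exactly $\delta_{\pi, 1_P}$, which yields the lemma.

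The only mildly delicate step is the passage from the one-sided identity $\mu_P * \zeta_P = \delta_P$ (essentially the definition) to the two-sided identity $\zeta_P * \mu_P = \delta_P$. This can be justified by a general finite-dimensional linear-algebra argument in the incidence algebra restricted to each finite interval $[\pi, 1_P]$, or verified by a direct induction on the length of a maximal chain from $\pi$ to $1_P$, using the recursive construction of $\mu_P$ inside that interval. Either route is entirely routine, and no real obstacle is expected.
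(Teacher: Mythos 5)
Your proof is correct; note that the paper states this lemma without any proof, treating it as a standard fact about incidence algebras, and your argument supplies exactly the routine verification being taken for granted: specializing $(\zeta_P \ast \mu_P)(\pi, 1_P) = \delta_P(\pi, 1_P)$ and using $\zeta_P \equiv 1$ together with the fact that $\sigma \le 1_P$ holds automatically. The one subtlety you flag --- that the defining (one-sided) inverse relation for $\mu_P$ is automatically two-sided in the incidence algebra of a finite poset --- is indeed the only point requiring care, and either of your proposed justifications (interval-wise finite-dimensional linear algebra, or induction via the dual recursion $\mu_P(\pi,\sigma) = -\sum_{\pi < \rho \le \sigma} \mu_P(\rho,\sigma)$) is standard and sound.
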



\begin{example}\label{ex:Mobius_poset}
\begin{enumerate}[\rm (1)]
\item A function $\mu_{\mathcal{B}(n)}$ denotes the M\"{o}bius function on $\mathcal{B}(n)$. It is easy to verify that, if $W\le V$ in $\mathcal{B}(n)$ then $\mu_{\mathcal{B}(n)}(W, V)=(-1)^{|V|-|W|}$. In particular, we have $\mu_{\mathcal{B}(n)}(W,[n])=(-1)^{n-|W|}$ for all $W\in \mathcal{B}(n)$. 

\item A function $\mu_n=\mu_{\calP(n)}$ denotes the M\"{o}bius function on $\mathcal{P}(n)$. The function $\mu_n$ can be explicitly computed as follows: for $\pi, \sigma \in \calP(n)$, 
\[
  \mu_n(\pi,\sigma)=(-1)^{|\pi|-|\sigma|} (2!)^{r_3} (3!)^{r_4} \cdots ((n-1)!)^{r_n},
\]
where $|\pi|$ denotes the number of blocks of $\pi \in \calP(n)$ and $r_i$ is the number of blocks of $\sigma$ that contain exactly $i$ blocks of $\pi$. In particular, we have
\begin{equation} \label{eq:white}
  \mu_n(\pi,1_n)=(-1)^{|\pi|-1}(|\pi|-1)!
\end{equation}
and
\[
  \mu_n(0_n, \sigma)=(-1)^{n-|\sigma|}(2!)^{t_3}(3!)^{t_4}\cdots ((n-1)!)^{t_n},
\]
where $t_i$ is the number of blocks of $\sigma$ of size $i$.
\end{enumerate}
\end{example}


\subsection{Finite free probability}

 {In this section, we introduce essential concepts in finite free probability. Specifically, we define two finite free convolutions and finite free cumulants, summarizing several results related to these concepts.}


\subsubsection*{ {Finite free convolutions}}

For any $p,q\in \mathbb{P}_{\mathrm{mon}}(d)$, one defines the {\it finite free additive convolution} $p\boxplus_dq$ to be
  \[
    \begin{split}
      (p\boxplus_d q)(x) &= \sum_{k=0}^d (-1)^k \binom{d}{k} \sum_{i+j=k} \frac{k!}{i!j!}\widetilde{a}_i(p) \widetilde{a}_j(q)  x^{d-k}.
    \end{split}
  \]
For $p \in \mathbb{P}_{\mathrm{mon}}(d)$, a polynomial $p^{\boxplus_d m}$ denotes the $m$-th power of finite free additive convolution of $p$. Note that, if $p,q \in \mathbb{P}_{\mathrm{mon}}(d)$ are real-rooted, then so is $p\boxplus_d q\in \mathbb{P}_{\mathrm{mon}}(d)$ (see \cite[Theorem 1.3]{MSS22}).  The finite free additive convolution plays an important role in studying characteristic polynomials of the sum of (random) matrices. For a $d\times d$ real symmetric matrix $A$, $\chi_A$ denotes the characteristic polynomial of $A$. Then we obtain
\[
  (\chi_A\boxplus_d \chi_B)(x)= \mathbb{E}_Q \det[xI_d-A-QBQ^*],
\]
where the expectation is taken over unitary matrices $Q$ distributed uniformly on the unitary group of degree $d$ (see \cite[Theorem 1.2]{MSS22}).
Moreover, the finite additive convolution is closely related to the free additive convolution $\boxplus$ which describes the law of sum of freely independent non-commutative random variables (see \cite{BV93, M92, V86} for detailed information on free additive convolution).
\begin{proposition} (see \cite[Corollary 5.5]{AP})
Suppose that $p_d,q_d \in \mathbb{P}_{\mathrm{mon}}(d)$ are real-rooted and $\mu,\nu$ are probability measures on $\R$ with compact support. If $\mu\llbracket p_d \rrbracket\xrightarrow{w}\mu$ and $\mu\llbracket q_d \rrbracket \xrightarrow{w} \nu$ as $d\rightarrow\infty$, respectively, then $\mu \llbracket p_{d}\boxplus_d q_{d} \rrbracket \xrightarrow{w}\mu\boxplus\nu$ as $d\rightarrow\infty$.
\end{proposition}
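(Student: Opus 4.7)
The plan is to use the method of moments, with finite free cumulants serving as the combinatorial bridge between $\boxplus_d$ and $\boxplus$. First, I would invoke (from Section 2.2, which will recall these facts) the additivity of finite free cumulants: for every $1\le n\le d$,
\[
\kappa_n^{(d)}(p\boxplus_d q)=\kappa_n^{(d)}(p)+\kappa_n^{(d)}(q).
\]
This is the finite free analogue of the classical free cumulant additivity $\kappa_n(\mu\boxplus\nu)=\kappa_n(\mu)+\kappa_n(\nu)$, and is the structural reason one can hope to reduce the problem to the individual sequences $(p_d)$ and $(q_d)$.

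Second, the key asymptotic input I would use is that if $\mu\llbracket p_d\rrbracket\xrightarrow{w}\mu$ as $d\to\infty$ and $\mu$ has compact support, then for every fixed $n$ one has $\kappa_n^{(d)}(p_d)\to\kappa_n(\mu)$. This will follow from two ingredients: (i) convergence of the empirical moments $\tfrac{1}{d}\sum_k \lambda_k(p_d)^n$ to the moments of $\mu$, and (ii) the fact that the finite free moment--cumulant relation is a sum over $\calP(n)$ with $d$-dependent weights that collapse, as $d\to\infty$, to the classical free moment--cumulant formula in which only noncrossing partitions survive with weight one. Combining (i), (ii) and Step 1,
\[
\kappa_n^{(d)}(p_d\boxplus_d q_d)=\kappa_n^{(d)}(p_d)+\kappa_n^{(d)}(q_d)\longrightarrow\kappa_n(\mu)+\kappa_n(\nu)=\kappa_n(\mu\boxplus\nu).
\]
Inverting the limiting moment--cumulant relation then gives convergence of every fixed moment of $\mu\llbracket p_d\boxplus_d q_d\rrbracket$ to the corresponding moment of $\mu\boxplus\nu$.

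Third, to promote moment convergence to weak convergence I would establish tightness of $\{\mu\llbracket p_d\boxplus_d q_d\rrbracket\}_{d}$. Since $\mu$ and $\nu$ have compact support, a standard truncation lets one assume the roots of $p_d$ and $q_d$ lie in a fixed compact interval uniformly in $d$. The Marcus--Spielman--Srivastava interlacing estimates for $\boxplus_d$ then bound the extremal roots of $p_d\boxplus_d q_d$ in terms of those of $p_d$ and $q_d$ (the maximal root is bounded by the sum of the maxima, and symmetrically for the minimum), giving the required uniform support control. Moment convergence together with a common compact support yields the claimed weak convergence.

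The main obstacle is Step 2, namely the convergence $\kappa_n^{(d)}(p_d)\to\kappa_n(\mu)$. It demands a careful analysis of the finite free moment--cumulant relation, showing that the $d$-dependent coefficients weighting crossing partitions vanish as $d\to\infty$ while those on noncrossing partitions tend to one. This is precisely the kind of partition-sum asymptotics that the combinatorial identity of Theorem \ref{main1} is designed to handle, and for the present statement the detailed verification has been carried out in \cite{AP}.
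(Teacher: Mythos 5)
Your Steps 1 and 2 are exactly the route behind the result: note that the paper itself does not reprove this proposition (it quotes it from \cite[Corollary 5.5]{AP}), but its own Proposition \ref{prop:converges_freecumulant} packages precisely the bridge you describe, namely that $\mu\llbracket p_d\rrbracket \xrightarrow{w}\mu$ with $\mu$ compactly supported is \emph{equivalent} to $\kappa_n^{(d)}(p_d)\to\kappa_n(\mu)$ for every fixed $n$. Combined with the additivity $\kappa_n^{(d)}(p_d\boxplus_d q_d)=\kappa_n^{(d)}(p_d)+\kappa_n^{(d)}(q_d)$ and $\kappa_n(\mu\boxplus\nu)=\kappa_n(\mu)+\kappa_n(\nu)$, the forward direction of that proposition applied to $p_d$ and $q_d$, plus the reverse direction applied to $p_d\boxplus_d q_d$, finishes the argument in three lines; one only has to observe that $\mu\boxplus\nu$ is again compactly supported, so the reverse direction is applicable to the limit. (A caveat you share with the paper's sketch: weak convergence alone does not imply convergence of empirical moments without some uniform integrability, so ingredient (i) of your Step 2 tacitly uses the same convention as Proposition \ref{prop:converges_freecumulant}; within this framework your Steps 1--2 are fine.)

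The genuine problem is your Step 3. The claim that ``a standard truncation lets one assume the roots of $p_d$ and $q_d$ lie in a fixed compact interval uniformly in $d$'' is not justified: weak convergence of the empirical root distributions to compactly supported limits does not force the roots to be uniformly bounded (an asymptotically vanishing fraction of roots may escape to infinity while the empirical measures still converge), and if you truncate the offending roots you change the polynomials, so you would then have to control the effect of that modification on $p_d\boxplus_d q_d$ --- which is not automatic, since a single wild root perturbs all coefficients of the convolution. Fortunately this step is unnecessary. Once every moment of $\mu\llbracket p_d\boxplus_d q_d\rrbracket$ converges to the corresponding moment of $\mu\boxplus\nu$, you are done: $\mu\boxplus\nu$ is compactly supported, hence determined by its moments, and moment convergence to a determinate measure upgrades to weak convergence by the Fr\'{e}chet--Shohat argument (moment bounds give tightness, and every subsequential weak limit has the moments of $\mu\boxplus\nu$). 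This is exactly how the proof of Proposition \ref{prop:converges_freecumulant} concludes in the paper, and replacing your Step 3 with it removes the only flawed step, yielding essentially the proof of \cite{AP}.
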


Similarly, for $p,q \in \mathbb{P}_{\mathrm{mon}}(d)$, the {\it finite free multiplicative convolution} $p\boxtimes_dq$ is defined as
\begin{align*}
  (p\boxtimes_d q)(x)= \sum_{i=0}^d (-1)^i \binom{d}{i} \widetilde{a}_i(p) \widetilde{a}_i(q) x^{d-i}.
\end{align*}
For $p \in \monP(d)$, we denote by $p^{\boxtimes_d m}$ the $m$-th power of finite free multiplicative convolution of $p$.
Note that, if $p$ has only nonnegative roots and $q$ is real-rooted, then $p\boxtimes_d q$ has only real roots (see, e.g., \cite[Section 16, Exercise 2]{Mar66}).
If $p,q$ have only roots located on $\mathbb{T}:=\{z\in \C \ |\ |z|=1\}$, then so is $p\boxtimes_d q$ (see, e.g., \cite[Satz 3]{Sz22}).
According to \cite[Theorem 1.5]{MSS22}, the finite free multiplicative convolution describes a characteristic polynomial of the product of positive definite matrices. More precisely, if $A$ and $B$ are $d\times d$ positive definite matrices, then
\[
  (\chi_A\boxtimes_d \chi_B)(x)=\mathbb{E}_Q\det[xI_d-AQBQ^*].
\]
Furthermore, it is also known that $\boxtimes_d$ is closely related to free multiplicative convolution $\boxtimes$ which describes the law of multiplication of freely independent random variables (see \cite{BV93} for definition and known facts of free multiplicative convolution).
\begin{proposition} (see \cite[Theorem 1.4]{AGVP})
Let us consider $p_d, q_d \in \monP(d)$ in which $p_d$ has only nonnegative roots and $q_d$ is real-rooted. Further, consider probability measures $\mu,\nu$ on $\R$ with compact support, in which $\mu$ is supported on $[0,\infty)$. If $\mu \llbracket p_{d} \rrbracket \xrightarrow{w} \mu$ and $\mu \llbracket q_{d} \rrbracket \xrightarrow{w} \nu$ as $d\rightarrow\infty$, respectively, then $\mu \llbracket p_{d}\boxtimes_d q_{d} \rrbracket \xrightarrow{w} \mu \boxtimes \nu$ as $d\rightarrow\infty$. 
\end{proposition}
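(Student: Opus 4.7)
My plan is to prove this proposition by showing convergence of every finite free cumulant of $p_d\boxtimes_d q_d$ to the corresponding free cumulant of $\mu\boxtimes\nu$, and then using the moment--cumulant relation together with uniform control of the roots to upgrade to weak convergence. By \cite{AP}, the hypothesis $\mu\llbracket p_d\rrbracket\xrightarrow{w}\mu$ combined with uniform bounds on the roots of $p_d$ yields $\kappa_n^{(d)}(p_d)\to\kappa_n(\mu)$ for every $n\in\N$, and similarly $\kappa_n^{(d)}(q_d)\to\kappa_n(\nu)$; this is what I would take as input.

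The main tool would be the formula announced in Section 2.2 that expresses $\kappa_n^{(d)}(p\boxtimes_d q)$ as a sum over pairs of partitions of $[n]$, carrying a $d$-dependent combinatorial weight and products of the form $\kappa_\pi^{(d)}(p)\cdot\kappa_\sigma^{(d)}(q)$, where $\kappa_\pi^{(d)}(p)=\prod_{V\in\pi}\kappa_{|V|}^{(d)}(p)$. Substituting the converging cumulants of $p_d$ and $q_d$, the whole question reduces to analyzing the $d\to\infty$ asymptotics of this sum. The uniform boundedness of the roots of $p_d\boxtimes_d q_d$, needed to pass from cumulant convergence to weak convergence, I would obtain from the matrix representation $(p\boxtimes_d q)(x)=\mathbb{E}_Q\det[xI_d-AQBQ^*]$: since $p_d$ is nonnegative-rooted with $\mu\llbracket p_d\rrbracket\to\mu$ of compact support, the operator norm of $A$ is uniformly bounded, and similarly for $B$, so $\|AQBQ^*\|\le\|A\|\cdot\|B\|$ yields a uniform bound on the roots of $p_d\boxtimes_d q_d$.

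The decisive and most delicate step is the asymptotic analysis of the combinatorial weights. One must establish that only pairs $(\pi,\sigma)$ of partitions that are non-crossing and Kreweras-complementary contribute weight $1$ in the limit, while every other pair carries a factor of order $O(1/d)$ (arising from ratios of falling or rising factorials in $d$) and drops out. The natural vehicle for this analysis is precisely the partition-weighted identity in Theorem \ref{main1}: extracting the leading-order-in-$d$ contribution from the $\boxtimes_d$-cumulant formula should isolate exactly the non-crossing terms. Once the vanishing of all crossing contributions is settled, the surviving sum matches the classical Nica--Speicher formula for $\kappa_n(\mu\boxtimes\nu)$ as a sum over non-crossing partitions, which completes the identification and, combined with uniform boundedness of supports, yields the asserted weak convergence $\mu\llbracket p_d\boxtimes_d q_d\rrbracket\xrightarrow{w}\mu\boxtimes\nu$.
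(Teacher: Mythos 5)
First, a contextual remark: the paper does not prove this proposition at all; it is imported from \cite[Theorem 1.4]{AGVP} as a known black box, so there is no internal proof to compare against, and your proposal is in effect an attempt to reconstruct the argument of \cite{AGVP}. Your overall architecture is the right one: prove $\kappa_n^{(d)}(p_d\boxtimes_d q_d)\to\kappa_n(\mu\boxtimes\nu)$ using the exact formula quoted in Section 2.2, namely $\kappa_n^{(d)}(p\boxtimes_d q)=\frac{(-d)^{n-1}}{(n-1)!}\sum_{\sigma\vee\tau=1_n}d^{|\sigma|+|\tau|-2n}\mu_n(0_n,\sigma)\mu_n(0_n,\tau)\kappa_\sigma^{(d)}(p)\kappa_\tau^{(d)}(q)$, and then upgrade to weak convergence via Proposition \ref{prop:converges_freecumulant} (moments of a compactly supported measure determine it, so no separate uniform root bound is needed at that stage).

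However, your decisive step is wrong as stated. The $d$-exponent of a pair $(\sigma,\tau)$ is $|\sigma|+|\tau|-(n+1)$, and the pairs achieving the maximum $|\sigma|+|\tau|=n+1$ under $\sigma\vee\tau=1_n$ are exactly those whose bipartite block-intersection graph is a spanning tree; this class is \emph{strictly larger} than the set of non-crossing Kreweras-complementary pairs. For instance, with $n=3$, $\sigma=\{\{1,2\},\{3\}\}$ and $\tau=\{\{1,3\},\{2\}\}$ satisfy $\sigma\vee\tau=1_3$ and $|\sigma|+|\tau|=4$, yet $\tau\neq K(\sigma)=\{\{2,3\},\{1\}\}$; such pairs contribute at order $d^0$, not $O(1/d)$. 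Moreover, the surviving terms carry the weights $\frac{1}{(n-1)!}\prod_{V\in\sigma}(|V|-1)!\prod_{W\in\tau}(|W|-1)!$ coming from the M\"obius factors, so the limit is not term-by-term the Nica--Speicher sum $\sum_{\pi\in NC(n)}\kappa_\pi(\mu)\kappa_{K(\pi)}(\nu)$ with weight $1$ per pair; a genuine combinatorial identity equating the weighted tree-pair sum with the non-crossing sum is required, and this is precisely the nontrivial content of \cite{AGVP}. Theorem \ref{main1} as stated (a single-partition M\"obius-weighted sum with block-additive arguments) does not deliver this identity, although the conversion $\sum_{\sigma\vee\tau=1_n}(\cdots)=\sum_{\pi\in\calP(n)}\bigl(\sum_{\sigma\le\pi}\cdots\bigr)\bigl(\sum_{\tau\le\pi}\cdots\bigr)\mu_n(\pi,1_n)$ via Lemma \ref{prop:Mobius on posets} does bring the problem into its orbit. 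Two smaller flaws: weak convergence $\mu\llbracket p_d\rrbracket\xrightarrow{w}\mu$ alone does \emph{not} give a uniform bound on $\|A\|$ (a vanishing fraction of roots may escape to infinity), and bounding the roots of $\mathbb{E}_Q\det[xI_d-AQBQ^*]$ by $\sup_Q\|AQBQ^*\|$ requires the interlacing-family property of \cite{MSS22}, not merely taking expectations inside the determinant.
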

Also, according to \cite[Proposition 2.9]{K21}, the same statement holds when $p_d,q_d$ have only roots located on $\mathbb{T}$ and $\mu,\nu$ are probability measures on $\mathbb{T}$.

\subsubsection*{ {Limit behavior of finite free multiplicative convolution power}}

 {In Section 4, we will investigate limit theorems regarding finite free multiplicative convolutions.  Howeverm before doing so, here, we conssider the limit behavior of the $m$-fold finite free multiplicative convolution $p^{\boxtimes_dm}$ of $p \in \monP(d)$ with nonnegative roots, which surprsingly may lead to a non-trivial limit. }

First, let us recall Newton's inequality and Maclaurin's inequality for $\{\widetilde{a}_i(p)\}_{i=0}^d$. (see, e.g., \cite[Section 2.22]{HLP34}).
\begin{lemma}\label{lem:NewMac}

\begin{enumerate}[\rm (1)]
\item (Newton's inequality) \label{prop:Newton}
  Let $p \in \monP(d)$ be a monic polynomial with real roots.
  Then
   \[
    \widetilde{a}_{i+1}(p)\widetilde{a}_{i-1}(p) \le \widetilde{a}_{i}(p)^2, \qquad i=1,2,\dots, d-1.
  \]
  The equality holds if and only if its roots are the same $\alpha$ in which case $\widetilde{a}_{i}(p) = \alpha^i$ for $i=1,2,\dots, d$.
  
  \item (Maclaurin's inequality) Let $p \in \monP(d)$ be a monic polynomial with positive roots.
  Then 
  \begin{equation} \label{eq:Maclaurin}
    \widetilde{a}_{1}(p) \ge \widetilde{a}_{2}(p)^{\frac{1}{2}} \ge \cdots \ge \widetilde{a}_{d}(p)^{\frac{1}{d}}
  \end{equation}
  with equality if and only if its roots are the same $\alpha$.
  \end{enumerate}
\end{lemma}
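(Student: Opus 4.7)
The plan is to prove both inequalities by exploiting the standard fact that differentiation (via Rolle's theorem) and reversal $p\mapsto x^{\deg p}p(1/x)$ preserve real-rootedness, and act transparently on the normalized coefficient sequence $\widetilde{a}_j(p)$. The goal is to reduce any consecutive triple $(\widetilde{a}_{i-1}(p),\widetilde{a}_i(p),\widetilde{a}_{i+1}(p))$ to the coefficients of a real-rooted quadratic, where real-rootedness is equivalent to nonnegativity of the discriminant.

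First I would record two reduction formulas. Starting from $p(x)=\sum_{j=0}^d(-1)^j\binom{d}{j}\widetilde{a}_j(p)\,x^{d-j}$, a direct computation shows that the monic polynomial $q(x):=\frac{(i+1)!}{d!}\,p^{(d-i-1)}(x)$ has degree $i+1$, is real-rooted by iterated Rolle, and satisfies $\widetilde{a}_j(q)=\widetilde{a}_j(p)$ for $0\le j\le i+1$. When $\widetilde{a}_{i+1}(p)\ne 0$, the reversal $\widehat{q}(x):=(-1)^{i+1}x^{i+1}q(1/x)/\widetilde{a}_{i+1}(p)$ is monic of degree $i+1$, still real-rooted (its roots being the reciprocals of those of $q$), with $\widetilde{a}_k(\widehat{q})=\widetilde{a}_{i+1-k}(p)/\widetilde{a}_{i+1}(p)$ for $0\le k\le i+1$.

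For Newton's inequality I would then apply $i-1$ further derivatives to $\widehat{q}$ and rescale, obtaining a real-rooted monic quadratic $Q(x)=x^2-2\widetilde{a}_1(\widehat{q})\,x+\widetilde{a}_2(\widehat{q})$. The discriminant condition $\widetilde{a}_1(\widehat{q})^2\ge \widetilde{a}_2(\widehat{q})$, after multiplication by $\widetilde{a}_{i+1}(p)^2\ge 0$, rearranges to $\widetilde{a}_i(p)^2\ge \widetilde{a}_{i-1}(p)\,\widetilde{a}_{i+1}(p)$; note the sign of $\widetilde{a}_{i+1}(p)$ plays no role once squared. The corner case $\widetilde{a}_{i+1}(p)=0$ is handled by the standard perturbation argument: shift each root $\lambda_k$ by $\epsilon$, apply the inequality (all the relevant $\widetilde{a}$-coefficients becoming nonzero for small $\epsilon\ne 0$), and let $\epsilon\to 0$. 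Maclaurin's inequality \eqref{eq:Maclaurin} then follows quickly: positivity of the roots yields $\widetilde{a}_j(p)>0$ for every $j$ by Vieta, so one may form the ratios $r_j:=\widetilde{a}_j(p)/\widetilde{a}_{j-1}(p)$, and Newton's inequality is precisely the statement that $(r_j)_j$ is nonincreasing. Since $\widetilde{a}_k(p)^{1/k}=(r_1r_2\cdots r_k)^{1/k}$ is the geometric mean of the first $k$ terms of this nonincreasing sequence, and such a geometric mean is itself nonincreasing in $k$ (because $G_{k+1}\le G_k$ reduces to $r_{k+1}\le G_k$, which holds as $r_{k+1}\le r_j$ for every $j\le k$), the chain in \eqref{eq:Maclaurin} follows.

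The main obstacle will be tracing the equality cases through the chain of reductions. Equality in Newton at index $i$ forces the reduced quadratic $Q$ to have a double root; undoing the reversal and the successive differentiations (each governed by the equality case of Rolle) shows that $\widehat{q}$, then $q$, and finally $p$ itself must be of the form $(x-\alpha)^d$, yielding $\widetilde{a}_j(p)=\alpha^j$. For Maclaurin, equality throughout \eqref{eq:Maclaurin} is equivalent to the ratios $r_j$ being constant, which again forces $p=(x-\alpha)^d$ by the above. A secondary technical point is to verify that the perturbation argument used to dispose of the $\widetilde{a}_{i+1}(p)=0$ case does not break the equality analysis, which is handled by noting that equality in Newton with $\widetilde{a}_{i+1}(p)=0$ automatically forces $\widetilde{a}_i(p)=0$, and repeating this reasoning shows that all roots must coincide.
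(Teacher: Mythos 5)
The paper itself contains no proof of this lemma: it is quoted as classical with a pointer to \cite[Section 2.22]{HLP34}, so your proposal can only be judged on its own correctness. The inequality half is correct and is the standard reduction. Your two formulas check out: $q=\frac{(i+1)!}{d!}p^{(d-i-1)}$ satisfies $\widetilde{a}_j(q)=\widetilde{a}_j(p)$ for $0\le j\le i+1$, and when $\widetilde{a}_{i+1}(p)\ne 0$ the reversal is legitimate because $q(0)=(-1)^{i+1}\widetilde{a}_{i+1}(p)\ne 0$, with $\widetilde{a}_k(\widehat{q})=\widetilde{a}_{i+1-k}(p)/\widetilde{a}_{i+1}(p)$ as you say. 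The discriminant step, the multiplication by $\widetilde{a}_{i+1}(p)^2$, and the derivation of \eqref{eq:Maclaurin} from Newton via the nonincreasing ratios $r_j$ and monotone geometric means are all sound. (In the corner case $\widetilde{a}_{i+1}(p)=0$ you do not even need the perturbation: the left-hand side of Newton's inequality is then $0$ while the right-hand side is a square.)

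The genuine gap is in your equality analysis for part (1). The claim that equality at a single index $i$ forces all roots to coincide is false once zero roots are allowed, and your patch fails exactly there: take $p(x)=x^2(x-1)$, $d=3$, $i=2$, so that $\widetilde{a}_1(p)=1/3$ and $\widetilde{a}_2(p)=\widetilde{a}_3(p)=0$; then $\widetilde{a}_3(p)\widetilde{a}_1(p)=0=\widetilde{a}_2(p)^2$ with equality, yet the roots are $0,0,1$. Your argument correctly derives $\widetilde{a}_i(p)=0$ from $\widetilde{a}_{i+1}(p)=0$, but "repeating this reasoning" is impossible because there is no equality at index $1$ (indeed $\widetilde{a}_2\widetilde{a}_0=0<1/9=\widetilde{a}_1^2$), and the asserted conclusion is simply wrong. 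To be fair, the lemma as printed shares this defect (Hardy--Littlewood--P\'olya state strictness for \emph{positive} numbers), and the paper only invokes the equality case, in Proposition \ref{thm:newlimit}, with $\widetilde{a}_1(p)=\widetilde{a}_2(p)=1$, i.e.\ in the nondegenerate regime. In that regime your vague "equality case of Rolle" step can be made airtight in one stroke by the identity, valid for any monic real-rooted $g$ of degree $n\ge 2$ with roots $\mu_1,\dots,\mu_n$ and mean $\bar{\mu}$,
\[
\widetilde{a}_1(g)^2-\widetilde{a}_2(g)=\frac{1}{n(n-1)}\sum_{k=1}^n(\mu_k-\bar{\mu})^2 .
\]
Equality at index $i$ with $\widetilde{a}_{i+1}(p)\ne0$ gives $\widetilde{a}_1(\widehat{q})^2=\widetilde{a}_2(\widehat{q})$, so by this identity $\widehat{q}=(x-\beta)^{i+1}$ with $\beta\ne0$; hence $q=(x-1/\beta)^{i+1}$, so $\widetilde{a}_1(p)^2=\widetilde{a}_1(q)^2=\widetilde{a}_2(q)=\widetilde{a}_2(p)$, and applying the identity once more, now to $p$ itself, forces all roots of $p$ to be equal. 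Your Maclaurin equality case (positive roots, so no degeneracy) is fine under the "equality throughout" reading; note that even a single equality $G_k=G_{k+1}$ forces $r_{k+1}=G_k\ge r_k\ge r_{k+1}$, hence $r_1=\cdots=r_{k+1}$, in particular $\widetilde{a}_2=\widetilde{a}_1^2$, and the identity above again yields equal roots, so both readings of "equality" in part (2) are covered.
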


\begin{remark}
   {The inequality \eqref{eq:Maclaurin} remains valid even when $p$ has zero roots.}
  More precisely, if $p$ has exactly $k$ zero roots then $\widetilde{a}_{1}(p) >\widetilde{a}_{2}(p)^{\frac{1}{2}} > \cdots > \widetilde{a}_{d-k}(p)^{\frac{1}{d-k}}$ and $\widetilde{a}_{d-k+1}(p) = \cdots = \widetilde{a}_{d}(p) = 0$.
\end{remark}

\begin{proposition}\label{thm:newlimit}
  For $p\in\mathbb{P}_{\mathrm{mon}}(d)$ with nonnegative roots, we have
  \begin{align*}
    \lim_{m\rightarrow\infty}p^{\boxtimes_dm} (x) =
    \begin{cases}
      x^d,          & \widetilde{a}_1(p)<1,                                   \\
      x^d-dx^{d-1}, & \widetilde{a}_1(p)=1 \text{ and } \widetilde{a}_2(p)<1,  \\
      (x-1)^d,      & \widetilde{a}_1(p)=1 \text{ and } \widetilde{a}_2(p)=1.
    \end{cases}
  \end{align*}
  The limit does not exist when $\widetilde{a}_1(p)>1$.
\end{proposition}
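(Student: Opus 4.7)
The plan is to use the multiplicativity of the $\widetilde{a}_i$-coefficients under $\boxtimes_d$ to reduce the problem to the scalar limits $\widetilde{a}_i(p)^m$, and then to use Maclaurin's inequality (Lemma \ref{lem:NewMac}(2)) to propagate the values of $\widetilde{a}_1(p)$ and $\widetilde{a}_2(p)$ down the chain to all higher coefficients.

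The first step is to verify directly from the definition of $\boxtimes_d$ the identity $\widetilde{a}_i(p\boxtimes_d q)=\widetilde{a}_i(p)\,\widetilde{a}_i(q)$ for every $i=0,1,\dots,d$, which by induction gives $\widetilde{a}_i(p^{\boxtimes_d m})=\widetilde{a}_i(p)^m$ (with $\widetilde{a}_0\equiv 1$ since the polynomials are monic). Since convergence in $\monP(d)$ amounts to coefficient-wise convergence, it suffices to determine $\lim_{m\to\infty}\widetilde{a}_i(p)^m$ for every $i\ge 1$.

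Next, Maclaurin's inequality together with the remark following it yields $\widetilde{a}_1(p)\ge\widetilde{a}_2(p)^{1/2}\ge\cdots\ge\widetilde{a}_d(p)^{1/d}$ for $p$ with nonnegative roots. This chain produces three regimes. If $\widetilde{a}_1(p)<1$, then $\widetilde{a}_i(p)\le\widetilde{a}_1(p)^i<1$ for every $i\ge 1$, so every non-leading coefficient tends to $0$ and $p^{\boxtimes_d m}\to x^d$. If $\widetilde{a}_1(p)=1$ and $\widetilde{a}_2(p)<1$, then applying the chain from index $i=2$ onwards gives $\widetilde{a}_i(p)\le\widetilde{a}_2(p)^{i/2}<1$ for $i\ge 2$, while $\widetilde{a}_1(p)^m=1$, producing the limit $x^d-dx^{d-1}$. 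If $\widetilde{a}_1(p)=\widetilde{a}_2(p)=1$, the equality case of Maclaurin's inequality forces all roots of $p$ to be equal to a common $\alpha$, and then $\widetilde{a}_1(p)=\alpha=1$ yields $p(x)=(x-1)^d$, so every $\widetilde{a}_i(p)=1$ and the sequence is constantly $(x-1)^d$. The same chain rules out $\widetilde{a}_2(p)>1$ whenever $\widetilde{a}_1(p)\le 1$, so these three cases are exhaustive.

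Finally, if $\widetilde{a}_1(p)>1$, the coefficient of $x^{d-1}$ in $p^{\boxtimes_d m}$ is $-d\,\widetilde{a}_1(p)^m$, which is unbounded as $m\to\infty$, so no limit polynomial of degree $d$ exists. The argument is essentially bookkeeping; the only substantive point I expect to be careful about is correctly invoking the equality case of Maclaurin's inequality to pin down $p(x)=(x-1)^d$ in the boundary regime.
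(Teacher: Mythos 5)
Your proposal is correct and follows essentially the same route as the paper: reduce via $\widetilde{a}_i(p^{\boxtimes_d m})=\widetilde{a}_i(p)^m$ to coefficient-wise scalar limits, use the Maclaurin chain (valid for nonnegative roots by the remark) to handle the cases $\widetilde{a}_1(p)<1$ and $\widetilde{a}_1(p)=1$, $\widetilde{a}_2(p)<1$, and pin down $p(x)=(x-1)^d$ in the boundary case. The only cosmetic difference is that in that boundary case the paper invokes the equality case of Newton's inequality (Lemma \ref{lem:NewMac}(1)) rather than Maclaurin's; since equality in the first link $\widetilde{a}_1(p)=\widetilde{a}_2(p)^{1/2}$ of the Maclaurin chain is precisely Newton's equality $\widetilde{a}_0(p)\widetilde{a}_2(p)=\widetilde{a}_1(p)^2$ at $i=1$, your citation is equivalent, though part (1) of the lemma is the cleaner reference.
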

\begin{proof}
  For each $i$, we have
  \begin{align*}
    \widetilde{a}_i (p^{\boxtimes_dm}) =\widetilde{a}_i(p)^m \xrightarrow{m\rightarrow\infty}
    \begin{cases}
      0,            & \widetilde{a}_i(p)<1,  \\
      1, & \widetilde{a}_i(p)=1,  \\
      \infty,       & \widetilde{a}_i(p)>1.
    \end{cases}
  \end{align*}

  Since $d$ is fixed, it does not lead to any limit when $\widetilde{a}_1(p)>1$.

  Next, we consider the case that $\widetilde{a}_1(p) \le 1$.
  \begin{itemize}
    \item If $\widetilde{a}_1(p) < 1$, then $\widetilde{a}_i(p) < 1$ for $i=2,\dots, d$ by Lemma \ref{lem:NewMac} (2). 
    Then we have
          \begin{align*}
            p^{\boxtimes_dm} (x)=\sum_{i=0}^d (-1)^i \binom{d}{i} \widetilde{a}_i(p)^m x^{d-i} \xrightarrow{m\rightarrow\infty} x^d.
          \end{align*}

    \item If $\widetilde{a}_1(p)=1$, then $\widetilde{a}_2(p)\le1$ by Lemma \ref{lem:NewMac} (2). 
    There are two possible cases.
          \begin{enumerate}[\rm (i)]
            \item If $\widetilde{a}_2(p)<1$, then $\widetilde{a}_i(p)<1$ for $i=2,\dots, d$, and therefore
                  \[
                    p^{\boxtimes_dm} (x)=\sum_{i=0}^d (-1)^i \binom{d}{i} \widetilde{a}_i(p)^m x^{d-i} \xrightarrow{m\rightarrow\infty} x^d-dx^{d-1}.
                  \]

            \item If $\widetilde{a}_2(p)=1$, then $\widetilde{a}_i(p)=1$ for $i=2,\dots, d$ by Lemma \ref{lem:NewMac} (1).
            It means $p(x)=(x-1)^d$.
            Thus we have
                  \[
                    p^{\boxtimes_dm} (x)=(x-1)^d\xrightarrow{m\rightarrow\infty} (x-1)^d.
                  \]
          \end{enumerate}
  \end{itemize}
\end{proof}

\subsubsection*{ {Finite free cumulants}}

There is a useful combinatorial concept for understanding two finite free convolutions. For $p\in \mathbb{P}_{\mathrm{mon}}(d)$, the {\it finite free cumulant} of $p$ is defined by
\begin{align}\label{def:cumulant}
  \kappa_n^{(d)}(p):=\frac{(-d)^{n-1}}{(n-1)!} \sum_{\pi \in \calP(n)} \widetilde{a}_\pi(p) \mu_n(\pi,1_n),
\end{align}
for $n=1,2,\dots, d$ (see \cite[Proposition 3.4]{AP} for details).

\begin{example}
  \begin{enumerate}[\rm (1)]
    \item
          Let us take $p(x)=x^d-dx^{d-1}$. Since $\widetilde{a_1}(p) =1$ and $\widetilde{a}_i(p)=0$ for all $i=2,\dots, d$, it is easy to see that $\kappa_n^{(d)}(p)=d^{n-1}$ for $n=1,2,\dots, d$.

    \item Let $\lambda > 0$. We define the normalized Laguerre polynomial $\widehat{L}_d^{(\lambda)}$ as
          \[
            \widehat{L}_d^{(\lambda)}(x)=\sum_{i=0}^d (-1)^i \binom{d}{i} \frac{(d\lambda)_i}{d^i} x^{d-i},
          \]
          where $(\alpha)_i:=\alpha(\alpha-1)\cdots (\alpha-i+1)$. Then the finite free cumulants of $\hat{L}_d^{(\lambda)}$ are given by $\kappa_n^{(d)}(\widehat{L}_d^{(\lambda)})=\lambda$ for $n=1,2, \dots, d$ (see \cite{AGVP}).
  \end{enumerate}
\end{example}

According to \cite[Proposition 3.6]{AP}, it is known that the finite free cumulant linearizes the finite free additive convolution:
\begin{align*}
  \kappa_n^{(d)}(p\boxplus_d q ) =\kappa_n^{(d)}(p)+ \kappa_n^{(d)}(q)
\end{align*}
for $p, q\in \monP(d)$. In particular, we have
\begin{align*}
  \kappa_n^{(d)}(p^{\boxplus_d m})=m \kappa_n^{(d)}(p), \qquad m\in \N.
\end{align*}

Recall that, for $p \in \monP(d)$ and $c\neq 0$,
\begin{align*}
  D_c (p)(x)= c^d p(x/c).
\end{align*}
Hence, by the definition of finite free cumulants, we obtain
\begin{align}\label{eq:kappa_D}
  \kappa_n^{(d)}(D_c(p))=c^n \kappa_n^{(d)}(p).
\end{align}

According to \cite{AP}, it is known that the finite free cumulants approach the free cumulants introduced by Speicher as the degree $d$ tends to infinity.
A consequence of this is the following criterion for convergence in distribution.
\begin{proposition}\label{prop:converges_freecumulant}
Let us consider $p_d\in \mathbb{P}_{\mathrm{mon}}(d)$ and a probability measure $\mu$ with compact support. The following assertions are equivalent.
\begin{enumerate}[\rm (1)]
\item $\mu \llbracket p_d \rrbracket \xrightarrow{w}\mu $ as $d\rightarrow\infty$.
\item For all $n\in \N$, $\lim_{d\rightarrow \infty} \kappa_{n}^{(d)}(p_d)=\kappa_{n}(\mu)$.
\end{enumerate}
\end{proposition}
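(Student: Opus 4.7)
The plan is to prove this via a finite free moment--cumulant formula that interpolates between the two sides as $d\to\infty$, following the argument of \cite{AP}.

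Since $\mu$ has compact support, weak convergence $\mu\llbracket p_d\rrbracket \xrightarrow{w}\mu$ is equivalent to the convergence of all moments $m_n(\mu\llbracket p_d\rrbracket)\to m_n(\mu)$ for every $n\in\N$, as soon as the empirical measures $\mu\llbracket p_d\rrbracket$ are uniformly supported in some compact set. This uniform boundedness is automatic under (1), and under (2) it follows because a convergent sequence of finite free cumulants determines uniformly bounded moments through an inversion formula. Hence it suffices to show the equivalence of moment convergence and finite free cumulant convergence.

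To do so, I would first expand the definition
\begin{align*}
\kappa_n^{(d)}(p)=\frac{(-d)^{n-1}}{(n-1)!}\sum_{\pi\in\calP(n)}\widetilde{a}_\pi(p)\,\mu_n(\pi,1_n)
\end{align*}
and use Newton's identities to rewrite each $\widetilde{a}_i(p_d)$ as a polynomial in the normalised power sums $m_j(\mu\llbracket p_d\rrbracket)$, which are precisely $\frac{1}{d}\sum_k \lambda_k^j$ when $p_d(x)=\prod_k(x-\lambda_k)$. This produces a universal formula of the shape
\begin{align*}
\kappa_n^{(d)}(p_d)=\sum_{\pi\in\calP(n)} c_\pi(d)\,\prod_{V\in\pi} m_{|V|}(\mu\llbracket p_d\rrbracket),
\end{align*}
whose rational coefficients $c_\pi(d)$ depend only on $\pi$ and $d$, not on $p_d$. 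M\"obius inversion on $\calP(n)$ then triangularly inverts this relation so as to express each moment $m_n(\mu\llbracket p_d\rrbracket)$ as a polynomial in $\kappa_1^{(d)}(p_d),\dots,\kappa_n^{(d)}(p_d)$. Consequently, modulo controlling $\lim_{d\to\infty}c_\pi(d)$, convergence of moments up to order $n$ is equivalent to convergence of finite free cumulants up to order $n$.

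The asymptotic claim to establish is that $c_\pi(d)\to 1$ for non-crossing $\pi$ and $c_\pi(d)\to 0$ otherwise; once this is in hand, passing to the limit yields the free moment--cumulant relation
\begin{align*}
m_n(\mu)=\sum_{\pi\in NC(n)}\prod_{V\in\pi}\kappa_{|V|}(\mu),
\end{align*}
and the equivalence (1)$\Leftrightarrow$(2) follows by joint induction on $n$. The hard part will be precisely this combinatorial asymptotic, which requires blending Newton's identities with the explicit M\"obius value $\mu_n(\pi,1_n)=(-1)^{|\pi|-1}(|\pi|-1)!$ recorded in \eqref{eq:white} and tracking the powers of $d$ carefully, so as to isolate non-crossing partitions as those surviving at leading order. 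Since this analysis is carried out in full in \cite{AP}, the plan is to invoke their result rather than reprove it from scratch.
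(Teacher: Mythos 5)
Your proposal follows essentially the same route as the paper: both reduce the statement to the equivalence of moment convergence and finite free cumulant convergence and then defer to \cite[Theorem 5.4]{AP}, whose content is exactly your claimed universal formula with coefficients $c_\pi(d)$ tending to $1$ on non-crossing partitions and to $0$ otherwise, so your Newton's-identities sketch merely unpacks what the paper cites. The one caveat, which your write-up shares with the paper's own sketch, is that the asserted equivalence of weak convergence and convergence of all moments under (1) tacitly requires uniform control of the supports of $\mu\llbracket p_d\rrbracket$ (it is not ``automatic'' from weak convergence alone), though the converse direction is safe since a compactly supported $\mu$ is moment-determinate.
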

\begin{proof}
 {We give a sketch of the proof.
First we assume that $\mu\llbracket p_d \rrbracket \xrightarrow{w} \mu$ as $d\to \infty$.
Note that this weak convergence is equivalent to $\lim_{d\to \infty}m_n(\mu\llbracket p_d \rrbracket ) = m_n(\mu)$ for any $n\in \N$, where $m_n(\rho)$ denotes the $n$-th moment of a probability measure $\rho$ on $\R$. Then Theorem 5.4 in \cite{AP} implies the statement (2). Conversely, we assume that $\lim_{d\rightarrow \infty} \kappa_{n}^{(d)}(p_d)=\kappa_{n}(\mu)$ for any $n\in\N$. 
According to the equality in the line following (5.2) in [2, Theorem 5.4], we obtain
\[
\lim_{d\to \infty} m_n(\mu\llbracket p_d \rrbracket )  = \sum_{\sigma \in NC(n)} \kappa_\sigma (\mu) = m_n(\mu),
\]
where $NC(n)$ denotes the set of non-crossing partitions of $[n]$ and the last equality holds due to the moment-cumulant formula (see \cite[Proposition 11.4]{NS}). This implies that $\mu\llbracket p_d \rrbracket \xrightarrow{w} \mu$ as $d\to \infty$.}
\end{proof}

\subsubsection*{ {Formulas for finite free cumulants related to the finite free multiplicative convolution}}

In the following, we provide a formula for finite free cumulant of $p^{\boxtimes_d m}$ for $p \in \monP(d)$ and $m\in \N$.
First, the following lemma is directly derived from the definition of finite free multiplicative convolution.
\begin{lemma}\label{lem:finite_free_multi} (see \cite[Lemma 3.1]{FU})
  For a family $\{p_i\}_{i=1}^m \subset \mathbb{P}_{\mathrm{mon}}(d)$, one has
  \begin{align*}
    \widetilde{a}_j(p_1 \boxtimes_d \cdots \boxtimes_d p_m)=\prod_{i=1}^m\widetilde{a}_j(p_i) \qquad (j=1, \dots, d).
  \end{align*}
  In particular, if all $p_i = p$ for some $p \in \monP(d)$, then
  \begin{align*}
    \widetilde{a}_j(p^{\boxtimes_dm})=\widetilde{a}_j(p)^m.
  \end{align*}
\end{lemma}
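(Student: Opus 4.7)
The plan is to read off the identity directly from the defining formula of $\boxtimes_d$ for two factors, and then extend to $m$ factors by induction on $m$. The whole statement is essentially a reformulation of the definition, so there should be no conceptual obstacle — only the bookkeeping of normalized coefficients $\widetilde{a}_j$ needs to be handled carefully.

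First, I would start by fixing the correspondence between the ordinary coefficients and the normalized ones $\widetilde{a}_j$. By \eqref{eq:poly}, writing any $p\in \monP(d)$ as $p(x)=\sum_{i=0}^d(-1)^i\binom{d}{i}\widetilde{a}_i(p)x^{d-i}$, the coefficient of $x^{d-j}$ is precisely $(-1)^j\binom{d}{j}\widetilde{a}_j(p)$. Comparing this with the defining formula
\[
(p\boxtimes_d q)(x)=\sum_{i=0}^d(-1)^i\binom{d}{i}\widetilde{a}_i(p)\widetilde{a}_i(q)x^{d-i},
\]
the coefficient of $x^{d-j}$ in $p\boxtimes_d q$ is $(-1)^j\binom{d}{j}\widetilde{a}_j(p)\widetilde{a}_j(q)$. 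Dividing by $(-1)^j\binom{d}{j}$ yields
\[
\widetilde{a}_j(p\boxtimes_d q)=\widetilde{a}_j(p)\widetilde{a}_j(q),\qquad j=1,\dots,d.
\]
This handles the base case $m=2$ (the case $m=1$ is trivial).

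Next, I would argue by induction on $m$. Suppose the formula holds for $m-1$ factors. Applying the two-factor identity to $(p_1\boxtimes_d\cdots\boxtimes_d p_{m-1})\boxtimes_d p_m$ and then using the inductive hypothesis gives
\[
\widetilde{a}_j(p_1\boxtimes_d\cdots\boxtimes_d p_m)=\widetilde{a}_j(p_1\boxtimes_d\cdots\boxtimes_d p_{m-1})\,\widetilde{a}_j(p_m)=\prod_{i=1}^{m-1}\widetilde{a}_j(p_i)\cdot\widetilde{a}_j(p_m),
\]
which is the claim. Specializing to the case $p_1=\cdots=p_m=p$ immediately yields $\widetilde{a}_j(p^{\boxtimes_d m})=\widetilde{a}_j(p)^m$, completing the proof.

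The only subtlety worth flagging is the implicit use of associativity (and commutativity) of $\boxtimes_d$, so that the nested convolution in the inductive step is well-defined independently of the order of grouping. This is immediate from the multiplicative form of the $j$-th normalized coefficient once the base case is known, so associativity is actually a consequence of the identity rather than an obstacle to its proof. I do not anticipate any real difficulty in this lemma.
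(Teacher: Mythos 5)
Your proof is correct and matches the paper's treatment: the paper gives no separate argument, stating that the lemma is directly derived from the definition of $\boxtimes_d$ (citing \cite[Lemma 3.1]{FU}), which is exactly your coefficient comparison plus induction on $m$. Your closing remark that associativity follows from the multiplicative identity itself, rather than being a prerequisite, is also accurate.
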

Second, it is also known that
\[
  \kappa_n^{(d)}(p\boxtimes_d q) = \frac{(-d)^{n-1}}{(n-1)!} \sum_{\substack{\sigma, \tau \in \calP(n) \\ \sigma \lor \tau=1_n}} d^{|\sigma|+|\tau|-2n} \mu_n(0_n,\sigma) \mu_n(0_n,\tau) \kappa_{\sigma}^{(d)}(p) \kappa_{\tau}^{(d)}(q)
\]
from \cite[Theorem 1.1]{AGVP}.
In particular, if $p=q$, then
\[
  \kappa_n^{(d)}(p^{\boxtimes_d2})=\frac{(-d)^{n-1}}{(n-1)!} \sum_{\substack{\sigma_1, \sigma_2 \in \calP(n) \\ \sigma_1 \lor \sigma_2=1_n}} \left( \prod_{i=1}^2 d^{|\sigma_i|-n} \mu_n(0_n,\sigma_i) \kappa_{\sigma_i}^{(d)}(p)\right).
\]
In general, the following formula holds.
\begin{proposition}\label{prop:cumulant_formula}
  For a family $\{p_i\}_{i=1}^m \subset \monP(d)$, we have
  \begin{align*}
    \kappa_n^{(d)}(p_1 \boxtimes_d \cdots \boxtimes_d p_m)
    & =\frac{(-d)^{n-1}}{(n-1)!} \sum_{\pi \in \calP(n)} \prod_{i=1}^m \sum_{\sigma_i\le \pi} d^{|\sigma_i|-n} \mu_n(0_n,\sigma_i) \kappa_{\sigma_i}^{(d)}(p_i) \mu_n(\pi,1_n) \\
    & =\frac{(-d)^{n-1}}{(n-1)!}\sum_{\substack{ \sigma_1,\dots, \sigma_m \in \calP(n) \\ \sigma_1\lor \cdots\lor \sigma_m=1_n}} \left( \prod_{i=1}^m d^{|\sigma_i|-n} \mu_n(0_n,\sigma_i) \kappa_{\sigma_i}^{(d)}(p_i)\right).
  \end{align*}
  In particular, if all $p_i = p$ for some $p \in \monP(d)$, then
  \begin{align*}
    \kappa_n^{(d)}(p^{\boxtimes_dm})
    & =\frac{(-d)^{n-1}}{(n-1)!} \sum_{\pi \in \calP(n)} \left(\sum_{\sigma\le \pi} d^{|\sigma|-n} \mu_n(0_n,\sigma) \kappa_\sigma^{(d)}(p)\right)^m \mu_n(\pi,1_n) \\
    & =\frac{(-d)^{n-1}}{(n-1)!}\sum_{\substack{ \sigma_1,\dots, \sigma_m \in \calP(n) \\ \sigma_1\lor \cdots\lor \sigma_m=1_n}} \left( \prod_{i=1}^m d^{|\sigma_i|-n} \mu_n(0_n,\sigma_i) \kappa_{\sigma_i}^{(d)}(p)\right).
  \end{align*}
\end{proposition}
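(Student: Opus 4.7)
The plan is to apply the definition \eqref{def:cumulant} of finite free cumulants directly to $p_1\boxtimes_d\cdots\boxtimes_d p_m$ and then invert the moment--cumulant relation for each individual factor. By Lemma \ref{lem:finite_free_multi}, $\widetilde{a}_j(p_1\boxtimes_d\cdots\boxtimes_d p_m)=\prod_{i=1}^m\widetilde{a}_j(p_i)$ for each $j$, and multiplying this identity over the blocks of any $\pi\in\calP(n)$ gives $\widetilde{a}_\pi(p_1\boxtimes_d\cdots\boxtimes_d p_m)=\prod_{i=1}^m\widetilde{a}_\pi(p_i)$. Substituting into \eqref{def:cumulant} yields the starting identity
\[
\kappa_n^{(d)}(p_1\boxtimes_d\cdots\boxtimes_d p_m)=\frac{(-d)^{n-1}}{(n-1)!}\sum_{\pi\in\calP(n)}\Bigl(\prod_{i=1}^m\widetilde{a}_\pi(p_i)\Bigr)\mu_n(\pi,1_n).
\]

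Next I will rewrite each $\widetilde{a}_\pi(p_i)$ in terms of finite free cumulants. Möbius inversion applied to \eqref{def:cumulant} on the interval $[0_n,1_n]\subset\calP(n)$ gives the moment--cumulant inversion
\[
\widetilde{a}_n(p)=\sum_{\sigma\in\calP(n)}d^{|\sigma|-n}\mu_n(0_n,\sigma)\kappa_\sigma^{(d)}(p),
\]
which is the finite free analogue of Speicher's classical formula (this is recorded in \cite{AP}). Using the lattice factorization $[0_n,\pi]\cong\prod_{V\in\pi}\calP(V)$ and the corresponding multiplicativity of $\mu_n$ on product intervals, the block-level extension
\[
\widetilde{a}_\pi(p)=\sum_{\sigma\le\pi}d^{|\sigma|-n}\mu_n(0_n,\sigma)\kappa_\sigma^{(d)}(p)
\]
follows immediately. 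Plugging this in for each $p_i$ produces the first equality of the proposition.

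For the second equality I will expand the product over $i$ and swap the order of summation, so that the outer sum runs over tuples $(\sigma_1,\dots,\sigma_m)\in\calP(n)^m$ and the inner sum runs over $\pi$ satisfying $\pi\ge\sigma_1\vee\cdots\vee\sigma_m$. The inner sum $\sum_{\pi\ge\sigma_1\vee\cdots\vee\sigma_m}\mu_n(\pi,1_n)$ equals $\delta_{\sigma_1\vee\cdots\vee\sigma_m,\,1_n}$ by Lemma \ref{prop:Mobius on posets}, which kills every tuple whose join is not $1_n$ and leaves exactly the claimed sum. The specializations to $p_1=\cdots=p_m=p$ are then automatic: in the first form every factor of the product over $i$ coincides, so the product collapses to an $m$-th power, and in the second form one simply substitutes $p$ for each $p_i$.

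The main technical point is the moment--cumulant inversion used in the second paragraph: the scaling factor $d^{|\sigma|-n}$ must be tracked carefully through the Möbius inversion on $\calP(n)$, and the block-level extension must use the fact that $\mu_n$ factors over the sub-lattice $[0_n,\pi]\cong\prod_{V\in\pi}\calP(V)$. Once this is in place, the two desired identities follow from routine manipulation of sums and the single Möbius-collapse step supplied by Lemma \ref{prop:Mobius on posets}.
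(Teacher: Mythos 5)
Your proposal is correct and follows essentially the same route as the paper's proof: reduce via Lemma \ref{lem:finite_free_multi} to $\widetilde{a}_\pi(p_1\boxtimes_d\cdots\boxtimes_d p_m)=\prod_{i=1}^m\widetilde{a}_\pi(p_i)$ inside the definition \eqref{def:cumulant}, substitute the block-level inversion $\widetilde{a}_\pi(p)=\sum_{\sigma\le\pi}d^{|\sigma|-n}\mu_n(0_n,\sigma)\kappa_\sigma^{(d)}(p)$ from \cite[Proposition 3.4]{AP}, and then obtain the second equality by exchanging sums and collapsing $\sum_{\sigma_1\vee\cdots\vee\sigma_m\le\pi}\mu_n(\pi,1_n)=\delta_{\sigma_1\vee\cdots\vee\sigma_m,1_n}$ via Lemma \ref{prop:Mobius on posets}. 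The only cosmetic difference is that you sketch the derivation of the block-level inversion from the factorization $[0_n,\pi]\cong\prod_{V\in\pi}\calP(V)$, whereas the paper simply cites it.
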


\begin{proof}
By the definition of finite free cumulant \eqref{def:cumulant} and Lemma \ref{lem:finite_free_multi}, we obtain
\begin{align}
  \kappa_n^{(d)}(p_1 \boxtimes_d \cdots \boxtimes_d p_m)
  &=\frac{(-d)^{n-1}}{(n-1)!} \sum_{\pi \in \calP(n)} \widetilde{a}_\pi(p_1 \boxtimes_d \cdots \boxtimes_d p_m) \mu_n(\pi,1_n) \notag \\
  &=\frac{(-d)^{n-1}}{(n-1)!} \sum_{\pi \in \calP(n)} \prod_{i=1}^m \widetilde{a}_\pi(p_i) \mu_n(\pi,1_n).\label{eq:cumulant_formula}
\end{align}
  According to \cite[Proposition 3.4]{AP}, since
  \[
    \widetilde{a}_\pi(p)=\sum_{\sigma\le \pi} d^{|\sigma|-n} \mu_n(0_n,\sigma) \kappa_\sigma^{(d)}(p),
  \]
  for any $p \in \monP(d)$,
  the first equality holds by \eqref{eq:cumulant_formula}.

  The second equality is proved as follows:
  \begin{align*}
    \sum_{\pi \in \calP(n)} & \prod_{i=1}^m \sum_{\sigma_i\le \pi} d^{|\sigma_i|-n} \mu_n(0_n,\sigma_i) \kappa_{\sigma_i}^{(d)}(p_i) \mu_n(\pi,1_n)\\
    & =\sum_{\pi \in \calP(n)}\left(\sum_{\sigma_1,\dots, \sigma_m \le \pi} \prod_{i=1}^md^{|\sigma_i|-n} \mu_n(0_n,\sigma_i) \kappa_{\sigma_i}^{(d)}(p_i)\right)\mu_n(\pi,1_n)\\
    & =\sum_{\sigma_1,\dots, \sigma_m\in \calP(n)} \prod_{i=1}^m d^{|\sigma_i|-n} \mu_n(0_n,\sigma_i) \kappa_{\sigma_i}^{(d)}(p_i) \left( \sum_{\sigma_1\lor \cdots\lor\sigma_m \le \pi} \mu_n(\pi,1_n)\right) \\
    & =\sum_{\substack{ \sigma_1,\dots, \sigma_m \in \calP(n)\\ \sigma_1\lor \cdots\lor \sigma_m=1_n}} \left( \prod_{i=1}^m d^{|\sigma_i|-n} \mu_n(0_n,\sigma_i) \kappa_{\sigma_i}^{(d)}(p_i)\right),
  \end{align*}
  where we used Proposition \ref{prop:Mobius on posets} on the third line.
\end{proof}

\begin{example}
  By using the first equation in Proposition \ref{prop:cumulant_formula}, we obtain
  \[
    \kappa_n^{(d)}\left((\widehat{L}_d^{(1)})^{\boxtimes_dm} \right) =\frac{(-d)^{n-1}}{(n-1)!} \sum_{\pi \in \calP(n)} \left(\sum_{\sigma\le \pi} d^{|\sigma|-n} \mu_n(0_n,\sigma) \right)^m \mu_n(\pi,1_n)
  \]
  because $\kappa_n^{(d)}(\widehat{L}_d^{(1)})=1$ for all $n$. The formula implies that
  \begin{align*}
    \kappa_2^{(d)}\left( (\widehat{L}_d^{(1)})^{\boxtimes_dm}\right)=d\left\{1-\left(1-\frac{1}{d}\right)^m\right\}
  \end{align*}
  and
  \begin{align*}
    \kappa_3^{(d)}\left((\widehat{L}_d^{(1)})^{\boxtimes_dm}\right)=d^2 \left\{1-\frac{3}{2}\left(1-\frac{1}{d}\right)^m+\frac{1}{2}\left(1-\frac{1}{d}\right)^m\left(1-\frac{2}{d}\right)^m \right\}.
  \end{align*}
\end{example}


\section{Combinatorial formula related to finite free probability}

In this section, we investigate the value of
\begin{align*}
\sum_{\pi \in \mathcal{P}(n)} \prod_{i=1}^k \left( \sum_{V\in \pi} f_i(|V|) \right)\mu_n(\pi,1_n)
\end{align*}
for polynomials $f_1, \dots, f_k$ without constant terms.
 {These values will be crucial in examining the convergence of finite free cumulants in Sections 4--6.}

\subsection{The definition of polynomials $r_k[f](z)$ and $s_k[f](z)$}

\begin{definition}
For a polynomial $f(x) \in \C[x]_0$, define the family $\{\varphi_n[f](t)\}_{n\in\N}$ and its generating series $M[f](t,z)$ as follows. 
For $n\in\N$
\[
  \varphi_n[f](t) := \exp(f(n)t),
\]
and
\[
M[f](t,z):= 1 + \sum_{n=1}^\infty \frac{\varphi_n[f](t)}{n!}z^n.
\]
Also, we define $\{\psi_n[f](t)\}_{n\in \N}$, $\{r_k[f](z)\}_{k\in\N}$ and $\{s_k[f](z)\}_{k\in\N}$ as characterizing the following identities:
\[
  M[f](t,z) = e^z\left(1 + \sum_{k=1}^\infty \frac{r_k[f](z)}{k!}t^k\right),
\]
and
\[
  \log(M[f](t,z)) = \sum_{n=1}^\infty \frac{\psi_n[f](t)}{n!}z^n = z + \sum_{k=1}^\infty \frac{s_k[f](z)}{k!}t^k.
\]
\end{definition}
\begin{lemma}
  Let $\{\varphi_n[f](t)\}_{n\in \N}$, $\{\psi_n[f](t)\}_{n\in\N}$, $\{r_k[f](z)\}_{k\in \N}$, and $\{s_k[f](z)\}_{k\in\N}$ be defined as above.
  
  \begin{enumerate}[\rm (1)]
      \item For all $n,k\in\N$, we obtain
    \[
    \varphi_n[f](t) = \sum_{\pi \in \calP(n)} \psi_\pi[f](t), \qquad \text{or equivalently \qquad} \psi_n[f](t) = \sum_{\pi \in \calP(n)}\varphi_\pi[f](t) \mu_n(\pi,1_n),
    \]
    and
    \begin{equation} \label{eq:moment-cumulant_rs}
      r_k[f](z) = \sum_{\pi \in \calP(k)} s_\pi[f](z), \qquad \text{or equivalently \qquad} s_k[f](z) = \sum_{\pi \in \calP(k)}r_\pi[f](z) \mu_k(\pi,1_k).
          \end{equation}
    \item For all $n,k\in\N$, we have
    \begin{align}\label{sk_desired}
    s_k^{(n)}[f](0) = \psi_n^{(k)}[f](0) = \sum_{\pi \in \calP(n)}\left(\sum_{V\in\pi}f(|V|)\right)^k \mu_n(\pi,1_n),
    \end{align}
    where $s_k^{(n)}[f](0):= \frac{d^n}{dz^n} s_k[f](z) |_{z=0}$ and $\psi_n^{(k)}[f](0):=\frac{d^k}{dt^k}\psi_n[f](t)|_{t=0}$.
    \item For all $n,k\in\N$,
    \begin{equation} \label{eq:def_of_r}
      r_k^{(n)}[f](0) = \sum_{l=1}^n \binom{n}{l}(-1)^{n-l} f(l)^k,
    \end{equation}
    where $r_k^{(n)}[f](0):=\frac{d^n}{dz^n}r_k[f](z)|_{z=0}$.
  \end{enumerate}
\end{lemma}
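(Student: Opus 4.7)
The plan is to deduce all three parts from formal manipulations of the two factorizations of $M[f](t,z)$ and $\log M[f](t,z)$ in two variables.

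For part (1), I would apply the exponential formula (moment-cumulant duality) twice. Viewing $M[f](t,z)$ as an exponential generating series in $z$ with coefficients in $\C[[t]]$, the identity $M = \exp(\log M)$ together with the standard partition-sum expansion of the exponential yields
\[
\varphi_n[f](t) = \sum_{\pi \in \calP(n)} \psi_\pi[f](t),
\]
where $\psi_\pi[f](t) := \prod_{V \in \pi} \psi_{|V|}[f](t)$, and the inverse follows from the M\"obius inversion formula (Proposition \ref{prop:Mobius}). For the companion relations between $r_k$ and $s_k$, I would divide the defining equation for the $r_k$ by $e^z$ to obtain
\[
\log\!\left(1 + \sum_{k=1}^\infty \frac{r_k[f](z)}{k!} t^k\right) = \sum_{k=1}^\infty \frac{s_k[f](z)}{k!} t^k,
\]
and apply the same exponential formula, this time in the variable $t$ with coefficients in $\C[[z]]$.

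For part (2), I would extract the coefficient of $z^n t^k / (n! k!)$ from the two expressions
\[
\log M[f](t,z) = \sum_{n \ge 1} \frac{\psi_n[f](t)}{n!} z^n = z + \sum_{k \ge 1} \frac{s_k[f](z)}{k!} t^k
\]
to conclude that $\psi_n^{(k)}[f](0) = s_k^{(n)}[f](0)$ for all $n,k \ge 1$. To evaluate this common value, I would substitute $\varphi_n[f](t) = e^{f(n) t}$ into the M\"obius inversion from part (1):
\[
\psi_n[f](t) = \sum_{\pi \in \calP(n)} \exp\!\left(\sum_{V \in \pi} f(|V|)\, t\right) \mu_n(\pi, 1_n),
\]
and differentiate $k$ times in $t$ at $t = 0$, which produces the right-hand side of \eqref{sk_desired}.

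For part (3), I would derive an explicit closed form for $r_k[f](z)$. Expanding $e^{f(n) t}$ in $t$ and rearranging gives
\[
M[f](t,z) = e^z + \sum_{k=1}^\infty \frac{t^k}{k!} \sum_{m=1}^\infty \frac{f(m)^k z^m}{m!},
\]
so that multiplying by $e^{-z}$ and matching with the defining relation of $r_k[f](z)$ yields
\[
r_k[f](z) = e^{-z} \sum_{m=1}^\infty \frac{f(m)^k z^m}{m!}.
\]
Applying the Leibniz rule for the $n$-th derivative at $z = 0$, only terms with $m = n - j \ge 1$ contribute, and $(e^{-z})^{(j)}|_{z=0} = (-1)^j$, so that the reindexing $l = n - j$ produces \eqref{eq:def_of_r}.

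The main obstacle is simply the bookkeeping of bivariate generating series, keeping track of which variable plays the role of $z$ (encoding partition structure) versus $t$ (encoding the power $k$). A small subtlety worth noting is that $r_k[f](z)$ is \emph{a priori} only a formal power series, but the closed form derived above, combined with the hypothesis $f \in \C[x]_0$, confirms it is in fact a polynomial in $z$, consistent with its introduction in the definition.
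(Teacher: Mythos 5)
Your proposal is correct and follows essentially the same route as the paper: part (1) via the exponential formula and M\"obius inversion, part (2) by extracting the coefficient of $z^n t^k/(n!\,k!)$ from the two expansions of $\log M[f](t,z)$ and differentiating $\varphi_\pi[f](t)=\exp\bigl(\sum_{V\in\pi}f(|V|)\,t\bigr)$, and part (3) by multiplying $M[f](t,z)$ by $e^{-z}$ and reading off coefficients (your Leibniz-rule computation of $r_k^{(n)}[f](0)$ from the closed form $e^{-z}\sum_{m\ge 1}f(m)^k z^m/m!$ is the same Cauchy-product argument the paper uses with $\varphi_l^{(k)}[f](0)=f(l)^k$). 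No gaps; your closing remark on polynomiality of $r_k[f]$ matches the paper's Remark 3.4, though strictly it rests on the vanishing identity $\sum_{l=1}^n\binom{n}{l}(-1)^{n-l}l^k=0$ for $n>k$ rather than on the closed form alone.
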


\begin{proof}
The statement (1) follows from the moment-cumulant formula. Since $\varphi_{\pi}[f](0)=1$ and
\begin{align*}
\varphi_{\pi}^{(k)}[f](t):=\frac{d^k}{dt^k}\varphi_\pi[f](t)=\left(\sum_{V\in \pi} f(|V|)\right)^k \varphi_{\pi}[f] (t)
\end{align*}
for $k\in \N$ and $\pi\in \calP(n)$, we obtain
\begin{align*}
\psi_n^{(k)}[f](0)= \sum_{\pi \in \calP(n)}\varphi_\pi^{(k)}[f](0) \mu_n(\pi,1_n) = \sum_{\pi\in \calP(n)} \left(\sum_{V\in \pi} f(|V|)\right)^k \mu_n(\pi,1_n).
\end{align*}
Because we just exchanged the order of sum, we have $s_k^{(n)}[f](0) = \psi_n^{(k)}[f](0)$ as a desired result in (2). By the definitions of $M[f](t,z)$ and $r_k[f](z)$,
\begin{align*}
r_k^{(n)}[f](0) &= \sum_{l=1}^n \binom{n}{l}(-1)^{n-l} \varphi_l^{(k)}[f](0).
\end{align*}
It is easy to verify that $\varphi_l^{(k)}[f](0)=f(l)^k$ as desired.
\end{proof}

\begin{example} \label{example:mosteasy}
  Here, as the simplest polynomial, take $f(x) = x$.
  Then it immediately follows that $\varphi_n[f](t) = \exp(nt)$, $\psi_1[f](t) = e^t$ and $\psi_n[f](t) = 0$ $(n\ge 2)$.
  Hence, $s_k[f](z) = z$ and
  \begin{equation} \label{eq:sum_pi}
    r_k[f](z) = \sum_{\pi\in\calP(k)} z^{|\pi|}
  \end{equation}
  for every $k\in\N$.
\end{example}

\begin{remark} \label{rem:kumo}
  An easy consequence of \eqref{eq:def_of_r} and \eqref{eq:sum_pi} is
  \begin{equation} \label{eq:kurage}
    \sum_{l=1}^n \binom{n}{l}(-1)^{n-l} l^k= 0
  \end{equation}
  for $k\in\N$, if $n>k$.
  Hence, for any polynomial $f \in \C[x]_0$, we can observe that $r_k[f](z)$ is a polynomial because $r_k^{(n)}[f](0) = 0$ if $n > k \deg(f)$ from \eqref{eq:kurage}.
  Also, $s_k[f](z)$ is a polynomial due to the moment--cumulant formula \eqref{eq:moment-cumulant_rs}.
\end{remark}


\begin{lemma}\label{lem:linear}
Let $f,g$ be polynomials in $\C[x]_0$,\ $\alpha,\beta\in \C$ and $k\in \N$. Then
\begin{enumerate}[\rm (1)]
\item $r_1[\alpha f+ \beta g](z)=\alpha r_1[f] (z)+ \beta r_1[g](z)$.
\item $r_k[f](z)=r_1[f^k](z)$.
\end{enumerate}
\end{lemma}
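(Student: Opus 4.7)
The plan is to exploit the explicit formula \eqref{eq:def_of_r} for the derivatives $r_k^{(n)}[f](0)$, together with the observation that each $r_k[f](z)$ is a polynomial whose constant term vanishes, so that it is completely determined by the sequence $\{r_k^{(n)}[f](0)\}_{n\ge 1}$.

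First I would record the vanishing of the constant term. Setting $z=0$ in the defining identity
\[
M[f](t,z) = e^z\left(1 + \sum_{k=1}^\infty \frac{r_k[f](z)}{k!}t^k\right),
\]
and noting that every term in the sum defining $M[f](t,z)$ carries a positive power of $z$, we get $M[f](t,0)=1$, hence $r_k[f](0)=0$ for every $k\in\N$. Combined with Remark \ref{rem:kumo}, which tells us that $r_k[f]$ is a polynomial in $z$, this means $r_k[f]$ is uniquely determined by its higher derivatives at $0$.

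For (1), applying \eqref{eq:def_of_r} with $k=1$ gives
\[
r_1^{(n)}[\alpha f+\beta g](0) = \sum_{l=1}^n \binom{n}{l}(-1)^{n-l}\bigl(\alpha f(l)+\beta g(l)\bigr) = \alpha\, r_1^{(n)}[f](0) + \beta\, r_1^{(n)}[g](0),
\]
for every $n\in\N$. By the uniqueness above, this forces $r_1[\alpha f+\beta g](z)=\alpha r_1[f](z)+\beta r_1[g](z)$.

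For (2), observe first that $f^k\in\C[x]_0$ whenever $f\in\C[x]_0$, so $r_1[f^k]$ is well defined. Applying \eqref{eq:def_of_r} on both sides,
\[
r_k^{(n)}[f](0) = \sum_{l=1}^n \binom{n}{l}(-1)^{n-l} f(l)^k = \sum_{l=1}^n \binom{n}{l}(-1)^{n-l} (f^k)(l) = r_1^{(n)}[f^k](0),
\]
for every $n\in\N$, and the same uniqueness argument yields $r_k[f](z)=r_1[f^k](z)$. There is no real obstacle here; the only subtlety is the initial observation that $r_k[f]$ has vanishing constant term, which ensures matching Taylor coefficients at the origin suffices to identify the two polynomials.
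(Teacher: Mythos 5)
Your proposal is correct and follows essentially the same route as the paper, whose proof of this lemma is simply that both parts ``are derived directly from \eqref{eq:def_of_r}''. You merely fill in the details the paper leaves implicit, including the worthwhile observation that $r_k[f](0)=0$ (since $M[f](t,0)=1$), which is needed because \eqref{eq:def_of_r} only covers $n\ge 1$ and so the constant term must be pinned down separately before matching derivatives identifies the polynomials.
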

\begin{proof}
  Both are derived directly from \eqref{eq:def_of_r}.
\end{proof}

Next, we will generalize the definition of polynomials $r_k[f](z)$ and $s_k[f](z)$ as determined by polynomials $\{f_i\}_{i=1}^k$ and satisfying multi-linearity.
According to Lemma \ref{lem:linear}, the following can be understood as a natural extension.

\begin{definition}
  Let us consider $f_1,\dots, f_k\in \C[x]_0$. We define
\begin{equation} \label{eq:def_r}
  r_k[f_1,\dots,f_k] (z):= r_1[f_1 \cdots f_k] (z).
\end{equation}
  Likewise, for $\pi\in\calP(k)$, 
  \[
    r_\pi[f_1,\dots,f_k](z) := \prod_{V = \{i_1,\dots,i_{|V|}\}\in\pi} r_{|V|}[f_{i_1},\dots,f_{i_{|V|}}](z).
  \]
  We additionally define
  \begin{equation} \label{eq:cumulant_moment_s}
    s_\pi[f_1,\dots,f_k](z) := \sum_{\substack{\sigma\in\calP(k)\\\sigma \le \pi}} r_\sigma[f_1,\dots,f_k](z) \mu_k(\sigma,\pi)
  \end{equation}
  for $\pi \in \calP(k)$. In particular, $s_k[f_1,\dots,f_k](z)$ denotes $s_{1_k}[f_1,\dots,f_k](z)$.
\end{definition}

Clearly, $s_k[f, \dots, f] = s_k[f]$ for all polynomial $f\in \C[x]_0$, and hence this is a generalization.
The benefits of this generalization are the subsequent properties.

\begin{lemma}
  For $k\in \N$ and $f_1,\dots, f_k \in \C[x]_0$, we have
  \[
    r_k[f_1,\dots,f_k] (z)= \sum_{\pi\in\calP(k)} s_\pi[f_1,\dots,f_k] (z).
  \]
  Likewise, we obtain
  \[
    s_\pi[f_1,\dots,f_k](z) = \prod_{V = \{i_1,\dots,i_{|V|}\}\in\pi} s_{|V|}[f_{i_1},\dots,f_{i_{|V|}}](z)
  \]
  for $\pi\in\calP(k)$.
\end{lemma}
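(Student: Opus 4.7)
The plan is to handle both identities by essentially unpacking definitions, with the standard factorization of the partition lattice doing the heavy lifting.

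For the first identity, I would invoke Möbius inversion directly on the defining relation \eqref{eq:cumulant_moment_s}. Since $s_\pi[f_1,\dots,f_k](z) = \sum_{\sigma \le \pi} r_\sigma[f_1,\dots,f_k](z)\,\mu_k(\sigma,\pi)$ by definition, Proposition~\ref{prop:Mobius} yields the dual identity $r_\pi[f_1,\dots,f_k](z) = \sum_{\sigma \le \pi} s_\sigma[f_1,\dots,f_k](z)$ for every $\pi \in \calP(k)$. Specializing to $\pi = 1_k$, the left-hand side becomes $r_{1_k}[f_1,\dots,f_k](z) = r_k[f_1,\dots,f_k](z)$ (since $1_k$ has the single block $[k]$ and, by the product definition of $r_\pi$, this reduces to $r_k[f_1,\dots,f_k]$), and the sum on the right runs over all $\sigma \in \calP(k)$, which is exactly the desired identity.

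For the second identity, the plan is to exploit the classical lattice isomorphism of the interval $[0_k,\pi] \subset \calP(k)$ with the product $\prod_{V \in \pi}[0_V,1_V]$. Concretely, every $\sigma \le \pi$ corresponds bijectively to a tuple $(\sigma_V)_{V \in \pi}$, where $\sigma_V = \sigma|_V \in \calP(V)$, and the Möbius function on $\calP(k)$ factorizes as $\mu_k(\sigma,\pi) = \prod_{V \in \pi} \mu_{|V|}(\sigma_V, 1_{|V|})$. Combining this with the product definition of $r_\sigma[f_1,\dots,f_k](z)$, which (since each block of $\sigma$ lies inside a unique block of $\pi$) can be regrouped as $r_\sigma[f_1,\dots,f_k](z) = \prod_{V\in\pi} r_{\sigma_V}[f_V](z)$ with $f_V := (f_i)_{i \in V}$, the sum over $\sigma \le \pi$ separates into a product of sums over each $\sigma_V \in \calP(V)$:
\[
s_\pi[f_1,\dots,f_k](z) = \prod_{V \in \pi} \sum_{\sigma_V \in \calP(V)} r_{\sigma_V}[f_V](z)\,\mu_{|V|}(\sigma_V, 1_{|V|}) = \prod_{V \in \pi} s_{|V|}[f_V](z),
\]
which is the claim.

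The only non-trivial ingredient is the factorization $\mu_k(\sigma,\pi) = \prod_{V \in \pi} \mu_{|V|}(\sigma_V, 1_{|V|})$, but this is a standard fact about the partition lattice (already implicit in Example~\ref{ex:Mobius_poset}(2), where $\mu_n(\pi,\sigma)$ is given as a product indexed by the blocks of $\sigma$). Apart from this, the proof is a matter of bookkeeping: tracking indices carefully when relabelling $\sigma$ as the family $(\sigma_V)_{V\in\pi}$ and regrouping the product defining $r_\sigma$ block-by-block according to $\pi$. The main obstacle, then, is purely notational, namely writing out the reindexing cleanly without obscuring the fundamentally multiplicative structure of the interval $[0_k,\pi]$.
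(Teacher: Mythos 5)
Your proposal is correct and follows essentially the same route as the paper, which simply cites the standard Möbius-inversion argument of \cite[Lectures 9--11]{NS}: inversion of the defining relation \eqref{eq:cumulant_moment_s} specialized at $\pi = 1_k$ for the first identity, and the canonical factorization $[0_k,\pi]\cong\prod_{V\in\pi}\calP(V)$ together with the multiplicativity of the M\"obius function for the second. Your write-up merely makes explicit what the paper leaves to the reference (including the harmless points that the inversion applies to $\C[z]$-valued functions coefficientwise and that $r_\pi$ is well defined on unordered blocks by symmetry of $r_k$).
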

\begin{proof}
It follows from the standard discussion using the M\"{o}bius inversion formula, see \cite[Lectures 9--11]{NS}.
\end{proof}

\begin{example} \label{ex:coffee}
  As an interesting example, we take the polynomials $g_m(x) = x^m$ for $m\in\N$.
  Let us consider $s_k[g_{m_1}, \dots, g_{m_k}](z)$ for positive integers $\{m_i\}_{i=1}^k$ and $M = \sum_{i=1}^k m_i$.
  First, note that $g_1(x) = x$ and also 
  \[
    \begin{split}
      r_k[g_{m_1}, \dots, g_{m_k}] (z)&= r_{1}[g_{M}] (z)\\
      &= r_{M}[g_{1}] (z)\\
      &= \sum_{\pi\in\calP(M)} z^{|\pi|}
    \end{split}
  \]
  due to Lemma \ref{lem:linear} and Equation \eqref{eq:sum_pi}.
  
  The map $\calP(k) \to \calP(M)$, $\sigma \mapsto \widehat{\sigma}$, is defined as each point $\{i\}$ expanding $m_i$-interval, e.g., $\widehat{0}_k = \{\{1, \dots, m_1\}, \{m_1+1, \dots, m_1 + m_2\}, \dots, \{M - m_k+1, \dots, M\}\}$; in particular, $\calP(k)$ and $[\widehat{0}_k, 1_{M}]$ are poset isomorphism via this map. Then we have
  \[
    \begin{split}
      s_k[g_{m_1}, \dots, g_{m_k}] (z)&= \sum_{\sigma\in\calP(k)} r_\sigma[g_{m_1}, \dots, g_{m_k}](z)\mu_k(\sigma,1_k)\\
      &= \sum_{\sigma\in\calP(k)} r_{\widehat{\sigma}}[g_1,\dots,g_1] (z)\mu_{M}(\widehat{\sigma},1_{M})\\
      &=\sum_{\sigma\in\calP(k)} \sum_{\substack{\pi \in \calP(M) \\ \pi \le \widehat{\sigma}}} z^{|\pi|} \mu_{M}(\widehat{\sigma},1_{M})\\
      &=\sum_{\pi \in \calP(M)} z^{|\pi|} \sum_{\substack{\sigma \in \calP(k) \\ \pi \le \widehat{\sigma}}} \mu_{M}(\widehat{\sigma},1_{M})\\
      &=\sum_{\pi \in \calP(M)} z^{|\pi|} \sum_{\substack{\rho\in \calP(k)\\ \pi \vee \widehat{0}_k  \le \rho}} \mu_{M}(\rho,1_{M})\\
      &= \sum_{\substack{\pi \in \calP(M) \\ \pi \vee \widehat{0}_k = 1_{M}}} z^{|\pi|},
    \end{split}
  \]
  where we used the result of Example \ref{example:mosteasy} on the second line and Lemma \ref{prop:Mobius on posets} on the fifth line.
  Thus, $\deg (s_k[g_{m_1}, \dots, g_{m_k}]) = M - (k-1)$ and $\lead (s_k[g_{m_1}, \dots, g_{m_k}] )= \# \{\pi \in \calP(M) \ |\ \pi \vee \widehat{0}_k = 1_{M}, |\pi| = M - (k-1)\}$.
\end{example}

We obtain a generalized result of \eqref{sk_desired} as follows. 

\begin{proposition} \label{prop:multiple}
For any $k,n\in \N$ and $f_1,\dots, f_k \in \C[x]_0$,
  \begin{align*}
    s_k^{(n)}[f_1,\dots,f_k](0) : &= \frac{d^n}{dz^n} s_k[f_1,\dots,f_k](z) |_{z=0}\\
    &=\sum_{\pi\in\calP(n)} \prod_{i=1}^k\left(\sum_{V\in\pi}f_i(|V|)\right) \mu_n(\pi,1_n).
  \end{align*}
\end{proposition}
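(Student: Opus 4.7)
My approach is to deduce this multivariable identity from the single-variable special case established in part (2) of the preceding lemma, namely $s_k^{(n)}[f](0) = \sum_{\pi \in \calP(n)} \bigl(\sum_{V\in\pi} f(|V|)\bigr)^k \mu_n(\pi, 1_n)$, via the standard polarization argument for symmetric multilinear forms over a characteristic-zero field.

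First, I would check that both sides of the claimed identity are $\C$-multilinear and symmetric in $(f_1, \dots, f_k) \in \C[x]_0^k$. The right-hand side is manifestly so, since $\prod_{i=1}^k \bigl(\sum_{V\in\pi} f_i(|V|)\bigr)$ has each $f_i$ appearing linearly in exactly one factor, and permuting the factors does nothing. For the left-hand side, I would expand via $s_k[f_1,\dots,f_k](z) = \sum_{\sigma \in \calP(k)} r_\sigma[f_1,\dots,f_k](z)\, \mu_k(\sigma, 1_k)$ together with $r_\sigma[f_1,\dots,f_k] = \prod_{V \in \sigma} r_1\bigl[\prod_{j \in V} f_j\bigr]$; Lemma \ref{lem:linear}(1) then gives multilinearity because each $f_i$ appears only in the single block of $\sigma$ containing it, while symmetry follows because a permutation $\tau$ of the arguments sends $r_\sigma$ to $r_{\tau(\sigma)}$ and $\mu_k(\sigma, 1_k)$ is invariant under relabelling of the ground set $[k]$.

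Next, I would substitute $f := x_1 f_1 + \cdots + x_k f_k$ (with formal commuting variables $x_i$) into the single-variable identity and compare the coefficients of $x_1 x_2 \cdots x_k$ on both sides. By symmetric multilinearity, this coefficient on the left equals $k!\cdot s_k^{(n)}[f_1, \dots, f_k](0)$; by the multinomial theorem applied to $\bigl(\sum_i x_i \sum_V f_i(|V|)\bigr)^k$ on the right, it equals $k! \cdot \sum_\pi \prod_{i=1}^k \bigl(\sum_V f_i(|V|)\bigr)\, \mu_n(\pi, 1_n)$. Dividing by $k!$ then yields the claimed identity.

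The main obstacle, modest though it is, is the multilinearity-and-symmetry verification in the first step, since one must carefully unpack the M\"obius-weighted sum defining $s_k[f_1,\dots,f_k]$ and track how a permutation of the arguments relabels the blocks of $\sigma \in \calP(k)$; once these properties are established, the rest is routine polarization.
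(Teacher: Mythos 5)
Your proposal is correct and takes essentially the same approach as the paper: both proofs establish that the two sides are symmetric multilinear maps on $\C[x]_0^k$ (this is Lemma \ref{prop:multi-linear} in the paper) and then recover the general identity from the diagonal case \eqref{sk_desired} by polarization, valid since $\C$ has characteristic zero. The only cosmetic difference is the form of polarization used: you extract the coefficient of $x_1\cdots x_k$ after substituting $f=x_1f_1+\cdots+x_kf_k$ (which, to be fully rigorous, one should read either with the $x_i$ as arbitrary complex scalars and a comparison of polynomial coefficients, or by noting all identities are polynomial identities), whereas the paper invokes the explicit alternating-sum identity $\Phi(f_1,\dots,f_k)=\frac{1}{k!}\sum_{l=1}^k(-1)^{k-l}\sum_{J\subset[k],\,|J|=l}\Phi(f_J,\dots,f_J)$ of Lemma \ref{lem:core}; the two formulations are equivalent.
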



 {For the proof of Proposition \ref{prop:multiple}, we need to introduce the concept of symmetric multilinear maps on vector spaces and relevant results.}
Let $V, W$ be vector spaces over $\C$.
A multi-linear map $\Phi: V^k\rightarrow W$ is said to be {\it symmetric} if 
\[
\Phi(x_1,\dots, x_k)=\Phi(x_{\iota(1)},\dots, x_{\iota(k)}), \qquad (x_1,\dots, x_k)\in V^k,
\]
for any permutation $\iota$ of $[k]$.

\begin{lemma}[see \cite{Tho13}] \label{lem:core}
Let $V, W$ be vector spaces over $\C$.
If $\Phi: V^k\rightarrow W$ is a symmetric multi-linear map, then it is written as
\begin{align*}
\Phi(x_1,\dots, x_k)=\frac{1}{k!} \sum_{l=1}^k (-1)^{k-l} \sum_{\substack{J \subset [k]\\ |J| = l}} \Phi(\underbrace{x_J,\dots, x_J}_{k \text{ times}}),
\end{align*}
where $x_J:=\sum_{j\in J} x_j$.
\end{lemma}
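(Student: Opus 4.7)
The plan is to prove this polarization-type identity by expanding the right-hand side using multi-linearity, collecting all tuples that appear, and then carrying out an inclusion--exclusion argument to show that only tuples of pairwise distinct indices survive.

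First, I would use multi-linearity to write
\[
\Phi(x_J, \dots, x_J) = \sum_{(j_1, \dots, j_k) \in J^k} \Phi(x_{j_1}, \dots, x_{j_k}).
\]
Substituting this into $\sum_{l=1}^k (-1)^{k-l} \sum_{|J|=l} \Phi(x_J, \dots, x_J)$ and exchanging the order of summation, each tuple $(j_1, \dots, j_k) \in [k]^k$ gets some total coefficient, which I want to identify.

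Second, given a tuple $(j_1, \dots, j_k)$, let $S = \{j_1, \dots, j_k\}$ be its set of distinct values and set $m := |S|$. The tuple appears in $\Phi(x_J,\dots,x_J)$ exactly when $S \subseteq J$; for each size $l \ge m$ there are $\binom{k-m}{l-m}$ such $J$. Hence the coefficient of $\Phi(x_{j_1},\dots,x_{j_k})$ equals
\[
\sum_{l=m}^{k} (-1)^{k-l} \binom{k-m}{l-m} = (-1)^{k-m}\sum_{j=0}^{k-m} (-1)^{j}\binom{k-m}{j},
\]
which vanishes whenever $m < k$ and equals $1$ when $m = k$, by the standard binomial identity $\sum_{j=0}^n (-1)^j\binom{n}{j} = \delta_{n,0}$.

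Finally, only tuples with $k$ pairwise distinct entries survive, that is, the permutations of $[k]$, and the symmetry of $\Phi$ makes each such contribution equal to $\Phi(x_1,\dots,x_k)$. Since there are $k!$ permutations, the whole sum equals $k!\,\Phi(x_1,\dots,x_k)$, and dividing by $k!$ yields the claim. No step here is genuinely deep; the main obstacle is simply keeping the index shift straight in the inclusion--exclusion and confirming that the symmetry hypothesis is used exactly at the last stage to collapse all $k!$ permutation contributions into a single evaluation.
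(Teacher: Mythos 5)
Your proof is correct. Note that the paper does not actually prove this lemma at all --- it cites Thomas \cite{Tho13} and moves on --- so your argument supplies a self-contained proof of a fact the paper outsources; it is the standard polarization/inclusion--exclusion argument and essentially the one in the cited reference. The three steps all check out: multi-linearity gives $\Phi(x_J,\dots,x_J)=\sum_{(j_1,\dots,j_k)\in J^k}\Phi(x_{j_1},\dots,x_{j_k})$; for a tuple with value set $S$ of size $m$, summing over the sets $J\supseteq S$ yields the coefficient
\begin{align*}
\sum_{l=m}^{k}(-1)^{k-l}\binom{k-m}{l-m}=(-1)^{k-m}\sum_{j=0}^{k-m}(-1)^{j}\binom{k-m}{j}=(-1)^{k-m}(1-1)^{k-m},
\end{align*}
which is $1$ for $m=k$ and $0$ otherwise; and the symmetry hypothesis is invoked exactly once, to identify the $k!$ surviving permutation tuples with $\Phi(x_1,\dots,x_k)$. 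One cosmetic point worth recording: the sum in the statement starts at $l=1$ rather than $l=0$, which is harmless in your bookkeeping because every tuple has $m\ge 1$, and in any case the omitted term $J=\emptyset$ would contribute $\Phi(0,\dots,0)=0$ by multi-linearity. It is also worth observing that your argument nowhere uses that the scalars are $\C$; it works over any field of characteristic zero (or indeed any field where $k!$ is invertible), which is mildly more general than the statement as given.
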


\begin{lemma} \label{prop:multi-linear}
 The both maps
 \[
 (f_1,\dots, f_k)\mapsto r_k[f_1,\dots, f_k](z) \quad \text{and} \quad
 (f_1,\dots, f_k) \mapsto s_k[f_1,\dots, f_k](z)
 \] 
 are symmetric multi-linear maps from $\C[x]_0^k$ to $\C[z]$.
\end{lemma}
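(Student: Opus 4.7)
The plan is to verify each of the three assertions (multi-linearity of $r_k$, multi-linearity of $s_k$, and symmetry of both) directly from the definitions \eqref{eq:def_r} and \eqref{eq:cumulant_moment_s}, leveraging Lemma \ref{lem:linear}(1).

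For $r_k$, symmetry is immediate since \eqref{eq:def_r} gives $r_k[f_1,\dots,f_k](z) = r_1[f_1 \cdots f_k](z)$ and the product $f_1 \cdots f_k$ is invariant under permutations of its factors. For multi-linearity, fix an index $j$ and all $f_i$ for $i\neq j$; letting $g := \prod_{i\neq j} f_i \in \C[x]_0$, we have $r_k[f_1,\dots,f_k](z) = r_1[f_j \cdot g](z)$, which is linear in $f_j$ because $f_j \mapsto f_j \cdot g$ is a linear endomorphism of $\C[x]_0$ and $r_1$ is linear by Lemma \ref{lem:linear}(1).

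Multi-linearity of $s_k$ follows from \eqref{eq:cumulant_moment_s}, which expresses $s_k$ as a finite linear combination of the quantities $r_\sigma[f_1,\dots,f_k](z) = \prod_{V\in\sigma}r_{|V|}[f_{i_1},\dots,f_{i_{|V|}}](z)$. Since the blocks of $\sigma$ partition $[k]$, each $f_i$ appears in exactly one factor, and each factor is multi-linear in its arguments by the previous paragraph; hence each $r_\sigma$ is multi-linear in $(f_1,\dots,f_k)$, and so is the sum $s_k$.

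For symmetry of $s_k$, let $\iota$ be a permutation of $[k]$ and, for $\sigma\in\calP(k)$, let $\iota(\sigma) := \{\iota(V) : V\in\sigma\}$, where $\iota(V) := \{\iota(i) : i\in V\}$. Using the symmetry of each $r_{|V|}$ block-wise, one checks that
\[
r_\sigma[f_{\iota(1)},\dots,f_{\iota(k)}](z) = r_{\iota(\sigma)}[f_1,\dots,f_k](z).
\]
By \eqref{eq:white}, $\mu_k(\sigma,1_k)$ depends only on $|\sigma|$, so $\mu_k(\sigma,1_k) = \mu_k(\iota(\sigma),1_k)$. The change of variables $\sigma' = \iota(\sigma)$ in the defining sum for $s_k[f_{\iota(1)},\dots,f_{\iota(k)}](z)$ then reproduces $s_k[f_1,\dots,f_k](z)$.

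The only subtle step is the symmetry argument for $s_k$, which hinges on the bijection $\sigma \mapsto \iota(\sigma)$ on $\calP(k)$ together with the invariance of the Möbius weight; the multi-linearity assertions are essentially a bookkeeping exercise once Lemma \ref{lem:linear}(1) is in hand. Since the codomain is $\C[z]$ (and not merely $\C$), I should also remark that each step preserves polynomiality, which is automatic because $r_1[\cdot](z)$ takes values in $\C[z]$ (Remark \ref{rem:kumo}).
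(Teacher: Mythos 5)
Your proof is correct and follows essentially the same route as the paper: multi-linearity of $r_k$ from \eqref{eq:def_r} and Lemma \ref{lem:linear}, multi-linearity of $s_k$ from the block-product structure in \eqref{eq:cumulant_moment_s}, and symmetry of $s_k$ via the relabeling bijection $\sigma \mapsto \iota(\sigma)$ on $\calP(k)$ combined with the invariance $\mu_k(\sigma,1_k)=\mu_k(\iota(\sigma),1_k)$ from \eqref{eq:white}. The only cosmetic difference is that the paper writes the relabeling identity with $\iota^{-1}(\sigma)$ where your $r_\sigma[f_{\iota(1)},\dots,f_{\iota(k)}](z)=r_{\iota(\sigma)}[f_1,\dots,f_k](z)$ is in fact the correct pointwise form; since $\iota$ ranges over all permutations, both conventions yield the same conclusion after the change of variables.
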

\begin{proof}
  By the definition \eqref{eq:def_r} and Lemma \ref{lem:linear}, it is clear that the first map is symmetric and multi-linear.

 The multi-linearity of the second map follows from the moment--cumulant formula \eqref{eq:cumulant_moment_s}.
 For a permutation $\iota$ of $[k]$,
 \begin{align*}
 s_k[f_{\iota(1)},\dots, f_{\iota(k)}](z) &= \sum_{\sigma \in \calP(k)} r_\sigma[f_{\iota(1)},\dots, f_{\iota(k)}](z) \mu_k(\sigma,1_k)\\
 &=\sum_{\sigma \in \calP(k)} r_{\iota^{-1}(\sigma)}[f_1,\dots, f_k](z) \mu_k(\sigma,1_k),
 \end{align*}
 where $\iota^{-1}(\sigma) := \{\iota^{-1}(V)\mid V \in \sigma \}$.
 Using the change of variables $\pi = \iota^{-1}(\sigma)$, we have
 \begin{align*}
  s_k[f_{\iota(1)},\dots, f_{\iota(k)}](z) &=\sum_{\pi \in \calP(k)} r_{\pi}[f_1,\dots, f_k](z) \mu_k(\iota(\pi),1_k) \\
  &=\sum_{\pi \in \calP(k)} r_{\pi}[f_1,\dots, f_k](z) \mu_k(\pi,1_k) \\
  &=s_k[f_1,\dots, f_k](z),
\end{align*}
where we used the equality $\mu_k(\iota(\pi),1_k) = \mu_k(\pi,1_k)$ due to Equation \eqref{eq:white}.
Thus, the second map is symmetric.
\end{proof}


\begin{proof}{\bf (Proposition \ref{prop:multiple})} \
Let us fix $k, n\in \N$. Define 
\[
\Phi_1(f_1,\dots, f_k):=s_k^{(n)}[f_1,\dots,f_k](0)
\]
 for $(f_1,\dots, f_k) \in \C[x]_0^k$.
 Then the map $\Phi_1$ is a symmetric multi-linear map from $\C[x]_0^k$ to $\C$ by Lemma \ref{prop:multi-linear}.
  Also, we define a symmetric multi-linear map $\Phi_2: \C[x]^{k}_0\rightarrow \C$ as
  \[
    \Phi_2(f_1,\dots, f_k) := \sum_{\pi\in\calP(n)} \prod_{i=1}^k\left(\sum_{V\in\pi}f_i(|V|)\right) \mu_n(\pi,1_n).
  \]
  One can see that, for any $f\in\C[x]_0$, we have $\Phi_1(f,\dots, f) = \Phi_2(f,\dots,f)$ due to Equation \eqref{sk_desired}.
  Thus, Lemma \ref{lem:core} implies that for any $(f_1,\dots, f_k)\in \C[x]_0^k$, 
  \begin{align*}
\Phi_1(f_1,\dots, f_k) 
&= \frac{1}{k!} \sum_{l=1}^k (-1)^{k-l} \sum_{\substack{J \subset [k]\\ |J| = l}} \Phi_{1}(f_J,\dots, f_J)\\
&=\frac{1}{k!} \sum_{l=1}^k (-1)^{k-l} \sum_{\substack{J \subset [k]\\ |J| = l}} \Phi_{2}(f_J,\dots, f_J)\\
&=\Phi_2(f_1,\dots, f_k),
\end{align*}
where $f_J=\sum_{j\in J} f_j \in \mathbb{C}[x]_0$.
\end{proof}

\subsection{The degree and leading coefficient of polynomial $s_k[c_{m_1},\dots,c_{m_k}](z)$}

As a special family of polynomials, we take 
$$
c_m (x) := \binom{x}{m},
$$ 
for $m\in\N$. Then the general cases are induced from them by multi-linearity of $(f_1,\dots, f_k)\mapsto s_k[f_1,\dots, f_k](z)$; see Section \ref{sec: NewComb}.

Consider $r_k[c_{m_1},\dots, c_{m_k}](z)$ for positive integers $\{m_i\}_{i=1}^k$.
By Equation \eqref{eq:def_of_r}, its coefficient is
\begin{equation}\label{eq:r_k^{(n)}}
  \begin{split}
    r_k^{(n)}[c_{m_1},\dots, c_{m_k}](0) &= r_1^{(n)}[c_{m_1}\cdots c_{m_k}](0)\\
    &= \sum_{l=1}^n \binom{n}{l}(-1)^{n-l}c_{m_1}(l) \cdots c_{m_k}(l)\\
    &= \sum_{l=1}^n \binom{n}{l}(-1)^{n-l}\binom{l}{m_1} \cdots \binom{l}{m_k},
  \end{split}
\end{equation}
where $ r_k^{(n)}[c_{m_1},\dots, c_{m_k}](0) := \frac{d^n}{dz^n}r_k[c_{m_1},\dots, c_{m_k}](z)|_{z=0}$. This value has a combinatorial interpretation as follows.

\begin{lemma} \label{prop:coefficients_r}
  It holds that
  \begin{equation*}
    r_k^{(n)}[c_{m_1},\dots, c_{m_k}](0) = \#R_{(m_1,\dots,m_k)}^{(n)},
  \end{equation*}
  where $R_{(m_1,\dots,m_k)}^{(n)} := \{(W_1,\dots,W_k) \in \mathcal{B}(n)^k \ |\ |W_i| = m_i,\; \bigcup_{i=1}^k W_i = [n] \}$.
  In particular,
  \begin{enumerate}[\rm (1)]
    \item $\deg (r_k[c_{m_1},\dots, c_{m_k}]) = \sum_{i=1}^{k}m_i =: M$,
    \item $\lead (r_k[c_{m_1},\dots, c_{m_k}]) = \dfrac{1}{M!} \cdot r_k^{(M)}[c_{m_1},\dots, c_{m_k}]  {(0)} = \dfrac{1}{\prod_{i=1}^k m_i!}$.
  \end{enumerate}
\end{lemma}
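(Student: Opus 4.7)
The plan is to establish the combinatorial identity by inclusion--exclusion on which elements of $[n]$ fail to be covered by the union $\bigcup_{i=1}^k W_i$, and then to read off the degree and leading coefficient as a finite-difference calculation.

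First I would start from the formula \eqref{eq:r_k^{(n)}} already derived in the excerpt, namely
\[
  r_k^{(n)}[c_{m_1},\dots,c_{m_k}](0) = \sum_{l=0}^n \binom{n}{l}(-1)^{n-l}\binom{l}{m_1}\cdots\binom{l}{m_k},
\]
where the $l=0$ term vanishes since each $m_i\ge 1$. To see that this equals $\#R_{(m_1,\dots,m_k)}^{(n)}$, I would count the latter by inclusion--exclusion: for a subset $S\subseteq[n]$, let $N(S)$ be the number of tuples $(W_1,\dots,W_k)\in\mathcal{B}(n)^k$ with $|W_i|=m_i$ and $W_i\cap S=\emptyset$ for all $i$. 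Then $N(S)=\prod_{i=1}^k\binom{n-|S|}{m_i}$, and
\[
  \#R_{(m_1,\dots,m_k)}^{(n)} = \sum_{S\subseteq[n]} (-1)^{|S|} N(S) = \sum_{s=0}^n \binom{n}{s}(-1)^{s}\prod_{i=1}^k\binom{n-s}{m_i}.
\]
Substituting $l=n-s$ converts this sum into \eqref{eq:r_k^{(n)}}, proving the main identity. This step is essentially bookkeeping, so I do not expect any obstacle here.

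For (1) and (2), I would view $p(l):=\prod_{i=1}^k\binom{l}{m_i}$ as a polynomial in $l$. Since $\binom{l}{m_i}=\frac{1}{m_i!}l^{m_i}+(\text{lower order in }l)$, the polynomial $p$ has degree $M=\sum_{i=1}^k m_i$ with leading coefficient $1/\prod_{i=1}^k m_i!$. The key tool is the classical finite difference fact
\[
  \sum_{l=0}^n \binom{n}{l}(-1)^{n-l} l^{d} \;=\; \begin{cases} 0, & n>d,\\ n!, & n=d,\end{cases}
\]
which is exactly \eqref{eq:kurage} (together with the easy $n=d$ boundary case, e.g., via Stirling numbers of the second kind). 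Applying this identity termwise to $p(l)$ shows that $r_k^{(n)}[c_{m_1},\dots,c_{m_k}](0)=0$ whenever $n>M$, hence $\deg(r_k[c_{m_1},\dots,c_{m_k}])\le M$; and for $n=M$ the sum collapses to $M!\cdot\lead(p)=M!/\prod_{i=1}^k m_i!$, which is strictly positive. Dividing by $M!$ (since the coefficient of $z^M$ in $r_k[c_{m_1},\dots,c_{m_k}](z)$ is $r_k^{(M)}[\,\cdot\,](0)/M!$) yields $\lead(r_k[c_{m_1},\dots,c_{m_k}])=1/\prod_{i=1}^k m_i!$, confirming both (1) and (2).

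I do not anticipate a serious obstacle in this argument; the only slightly subtle point is remembering that $r_k[\,\cdot\,](z)$ is defined through a Taylor expansion so that its coefficient of $z^n$ is $r_k^{(n)}[\,\cdot\,](0)/n!$, and matching the normalizations correctly when reading off $\lead$. Alternatively, one can avoid the finite difference calculation entirely for (1) and (2) by directly invoking the combinatorial interpretation: a tuple in $R_{(m_1,\dots,m_k)}^{(n)}$ forces $n\le M$, so $r_k^{(n)}[\,\cdot\,](0)=0$ for $n>M$; and for $n=M$ the condition $|W_i|=m_i$ together with $\bigcup W_i=[n]$ forces the $W_i$ to be pairwise disjoint, giving precisely $\binom{M}{m_1,\dots,m_k}=M!/\prod m_i!$ tuples, which again produces the claimed leading coefficient.
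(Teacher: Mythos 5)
Your proposal is correct and is in substance the same proof as the paper's: the paper establishes the identity by exchanging the order of summation in \eqref{eq:r_k^{(n)}} and invoking the M\"obius-function identity $\sum_{W\subset V}(-1)^{n-|V|}=\delta_{W,[n]}$ on $\mathcal{B}(n)$, which is precisely your inclusion--exclusion over uncovered elements read in the opposite direction. For (1) and (2) the paper simply reads both off the definition of $R_{(m_1,\dots,m_k)}^{(n)}$ (a tuple forces $n\le M$, and at $n=M$ the blocks $W_i$ must be disjoint, giving the multinomial count $M!/\prod_{i=1}^k m_i!$), which is exactly your stated alternative; your primary finite-difference argument via $\sum_{l=0}^n\binom{n}{l}(-1)^{n-l}l^{d}$ is an equally valid, slightly more computational substitute.
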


\begin{proof}
Equation \eqref{eq:r_k^{(n)}} implies
\begin{align*}
r_k^{(n)}[c_{m_1},\dots, c_{m_k}](0) &= \sum_{l=1}^n \sum_{\substack{V\in \mathcal{B}(n)\\ |V|=l}} \sum_{\substack{W_1,\dots, W_k \subset V \\ |W_1|=m_1, \dots, |W_k|=m_k }} (-1)^{n-l}.
\end{align*}
Define $W=\bigcup_{i=1}^k W_i$. Exchanging the order of summations shows 
\begin{align*}
r_k^{(n)}[c_{m_1},\dots, c_{m_k}](0) &= \sum_{\substack{W_1,\dots, W_k \in \mathcal{B}(n) \\ |W_1|=m_1, \dots, |W_k|=m_k }}  \sum_{l=1}^n\left( \sum_{\substack{W \subset V\\ |V|=l}} (-1)^{n-l} \right)\\
&=\sum_{\substack{W_1,\dots, W_k \in \mathcal{B}(n) \\ |W_1|=m_1, \dots, |W_k|=m_k }}  \sum_{W \subset V} (-1)^{n-|V|}\\
&=\sum_{\substack{W_1,\dots, W_k \in \mathcal{B}(n) \\ |W_1|=m_1, \dots, |W_k|=m_k }}  \delta_{W, [n]}\\
&=\#R_{(m_1,\dots,m_k)}^{(n)},
\end{align*}
where the third equation follows from Lemma \ref{prop:Mobius on posets} and Example \ref{ex:Mobius_poset} (1).
The properties (1) and (2) easily follow from the definition of the set $R_{(m_1,\dots,m_k)}^{(n)}$.
\end{proof}

Similar to the polynomial $r_k[c_{m_1},\dots, c_{m_k}](z)$,
we can give a combinatorial interpretation to the coefficients of $s_k[c_{m_1},\dots, c_{m_k}](z)$ which reflects the intrinsic decomposition of $R_{(m_1,\dots,m_k)}^{(n)}$.
Let us introduce a few concepts to explain it. 
\begin{itemize}
\item Define the natural map $\tau: \mathcal{B}(n)\setminus \emptyset \rightarrow \calP(n)$ as
$$
\tau(W) = \{W\} \cup \{\{j\} \ |\ j \notin W\}
$$
for all non-empty subsets $W \subset [n]$;
\item An {\it ordered partition} of $[n]$ is a tuple $\mathbf{ V} = (V_1,\dots, V_l)$ such that $\{V_1,\dots, V_l\}\in \calP(n)$.
\item There is a canonical map that makes a partition $\pi = \{V_1,\dots, V_l\} \in \calP(n)$ correspond to an ordered partition $\overrightarrow{\pi} = (V_1,\dots, V_l)$ such that $1\in V_1$ and $V_i$ contains the minimum number in $[n]\setminus (V_1\cup \cdots \cup V_{i-1})$ for $i\ge 2$.
We call $\overrightarrow{\pi}$ the {\it natural} ordered partition of $[n]$ associated with $\pi \in \calP(n)$.
Clearly, this map is a one-to-one correspondence.
\end{itemize}

 {
The following concepts are crucial for interpreting the coefficients of $s_k[c_{m_1},\dots, c_{m_k}](z)$ combiniatorally. .}

\begin{definition} \label{def:S}
  Let $W = (W_1, \dots, W_k) \in R_{(m_1,\dots,m_k)}^{(n)}$.
  \begin{enumerate}[\rm (1)]
    \item A tuple $W$ is said to be {\it separable} if $\vee_{i=1}^k \tau(W_i) \neq 1_n \in \calP(n)$.
    \item A tuple $W$ is said to be {\it essential} if $W$ is not separable.
  \end{enumerate}
 Let $S_{(m_1,\dots,m_k)}^{(n)}$ denote the set of essential tuples in $R_{(m_1,\dots,m_k)}^{(n)}$. 
\end{definition}

The set $R_{(m_1,\dots, m_k)}^{(n)}$ can be decomposed into its components which consist of essential tuples.
One can see that, for $(W_1,\dots, W_k)\in R_{(m_1,\dots, m_k)}^{(n)}$, there are uniquely a natural ordered partition $\overrightarrow{\pi} = (V_1,\dots, V_l)$ of $[k]$ and an ordered partition $\mathbf{ U} =(U_1,\dots, U_l)$ of $[n]$ with $\bigvee_{r\in V_j} \tau(W_r) =\tau\left(U_j \right)$ for each $1\le j \le l$. 

\begin{example}
Take $(W_1,W_2,W_3) \in R_{(2,2,2)}^{(5)}$, where
\[
W_1=\{1,4\}, \quad W_2 =\{2,4\} \quad \text{and} \quad W_3=\{3,5\}.
\]
Then $\overrightarrow{\pi}=(V_1,V_2)=(\{1,2\},\{3\})$ and $\mathbf{ U}=(U_1,U_2)=(\{1,2,4\},\{3,5\})$. It is clear that $\mathbf{ U}=\left(\bigcup_{r\in V_1}W_r,\ \bigcup_{r\in V_2}W_r\right)$. Moreover, $\tau(W_1)\lor \tau(W_2)=\{\{1,2,4\},\{3\},\{5\}\}=\tau(U_1)$ and $\tau(W_3)=\tau(U_2)$.
\end{example}

Define 
\[
  S_{(m_1,\dots, m_k)}^{{\overrightarrow{\pi}, \mathbf{ U}}}
  =\left\{ (W_1,\dots,W_k) \in R_{(m_1,\dots, m_k)}^{(n)} \;\middle|\;   
  \bigcup_{r \in V_j} W_r =U_j,\,
  \bigvee_{r\in V_j} \tau(W_r)= \tau\left( U_j\right)
  \right\}.
\]
for a natural ordered partition $\overrightarrow{\pi} =(V_1,\dots,V_l)$ of $[k]$, an ordered partition $\mathbf{ U} = (U_1, \dots, U_l)$ of $[n]$ and positive integers $\{m_i\}_{i=1}^k$.
Note that $S_{(m_1,\dots, m_k)}^{{\overrightarrow{\pi}, \mathbf{ U}}} $ is isomorphic to $S_{V_1}^{(|U_1|)}\times \cdots \times S_{V_l}^{(|U_l|)}$ where $S_V^{(i)}:=S_{(m_{r_1},\dots, m_{r_t})}^{(i)}$ when $V=\{r_1,\dots, r_t\}$.

A consequence of the above one-to-one correspondence is that
\begin{align*}
R_{(m_1,\dots, m_k)}^{(n)} &= \bigcup_{\overrightarrow{\pi}=(V_1,\dots, V_l)} \bigcup_{\mathbf{ U}=(U_1,\dots, U_l)} S_{(m_1,\dots, m_k)}^{{\overrightarrow{\pi}, \mathbf{ U}}}\\
&= \bigcup_{\overrightarrow{\pi}=(V_1,\dots, V_l)} \bigcup_{\substack{i_1,\dots, i_l \ge1\\ i_1+\cdots+ i_l=n}} \ \bigcup_{\substack{\mathbf{ U}=(U_1,\dots, U_l)\\ |U_j|=i_j}} S_{(m_1,\dots, m_k)}^{{\overrightarrow{\pi}, \mathbf{ U}}}
\end{align*}
and therefore 
\begin{align}\label{eq:R_count}
\# R_{(m_1,\dots, m_k)}^{(n)} = \sum_{\pi=\{V_1,\dots, V_l\}\in \calP(k)} \sum_{\substack{i_1,\dots, i_l\ge1\\ i_1+\cdots + i_l=n}}\binom{n}{i_1,\dots, i_l} \prod_{j=1}^l \# S_{V_j}^{(i_j)}.
\end{align}

The value $s_k^{(n)}[c_{m_1},\dots, c_{m_k}] (0)$ can be computed by counting the number of $S_{(m_1,\dots, m_k)}^{(n)}$.

\begin{proposition} \label{prop:s}
  It holds that
  \[
  s_k^{(n)}[c_{m_1},\dots, c_{m_k}] (0)= \#S_{(m_1,\dots,m_k)}^{(n)}.
  \]
  In particular, $s_k[c_{m_1},\dots, c_{m_k}](z)$ is a polynomial of degree $\sum_{i=1}^{k}m_i - (k-1)$.
\end{proposition}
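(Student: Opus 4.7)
The plan is to start from the moment-cumulant identity
\[
r_k[c_{m_1},\dots,c_{m_k}](z) = \sum_{\pi=\{V_1,\dots,V_l\}\in\calP(k)} \prod_{j=1}^l s_{|V_j|}[(c_{m_r})_{r\in V_j}](z),
\]
which is furnished by the lemma preceding Example \ref{ex:coffee}, and combine it with the combinatorial decomposition \eqref{eq:R_count} of $\#R_{(m_1,\dots,m_k)}^{(n)}$; an induction on $k$ then extracts the claimed formula for $s_k^{(n)}[c_{m_1},\dots,c_{m_k}](0)$.

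First I would take the $n$-th derivative at $z=0$ via the Leibniz rule. A key preliminary is that $s_{|V|}[(c_{m_r})_{r\in V}](0)=0$: the generating-series definition gives $r_1[f](z) = e^{-z}\sum_{n\ge 1} f(n) z^n/n!$, which vanishes at $z=0$ for any $f\in\C[x]_0$, so $r_\sigma(0)=0$ for every $\sigma\in\calP(k)$, and the moment--cumulant inversion \eqref{eq:cumulant_moment_s} forces $s_k(0)=0$. Hence only compositions $(i_1,\dots,i_l)$ of $n$ with each $i_j\ge 1$ survive, yielding
\[
r_k^{(n)}[c_{m_1},\dots,c_{m_k}](0) = \sum_{\pi=\{V_1,\dots,V_l\}} \sum_{\substack{i_1+\cdots+i_l=n\\ i_j\ge 1}} \binom{n}{i_1,\dots,i_l} \prod_{j=1}^l s_{|V_j|}^{(i_j)}[(c_{m_r})_{r\in V_j}](0).
\]
Invoking Lemma \ref{prop:coefficients_r} on the left identifies this with $\#R_{(m_1,\dots,m_k)}^{(n)}$, and comparison with \eqref{eq:R_count} shows the two double sums match term-by-term except that $\prod s_{V_j}^{(i_j)}(0)$ takes the place of $\prod \#S_{V_j}^{(i_j)}$.

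Next I would induct on $k$. The base case $k=1$ is immediate: the essentiality condition collapses to $W_1=[n]$, giving $s_1^{(n)}[c_{m_1}](0) = r_1^{(n)}[c_{m_1}](0) = \#R_{(m_1)}^{(n)} = \#S_{(m_1)}^{(n)}$. For $k\ge 2$, the term corresponding to $\pi=1_k$ (where $l=1$ and $i_1=n$) isolates exactly $s_k^{(n)}[c_{m_1},\dots,c_{m_k}](0)$ on one side and $\#S_{(m_1,\dots,m_k)}^{(n)}$ on the other; all remaining terms involve only $s_{|V_j|}^{(i_j)}(0)$ and $\#S_{V_j}^{(i_j)}$ with $|V_j|<k$, so the inductive hypothesis makes them cancel and the desired identity drops out.

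Finally, for the degree I would encode an essential tuple $(W_1,\dots,W_k)\in S_{(m_1,\dots,m_k)}^{(n)}$ as a bipartite multigraph $G$ on the vertex set $\{W_1,\dots,W_k\}\sqcup[n]$, with $j$ adjacent to $W_i$ iff $j\in W_i$. The covering condition $\bigcup_i W_i=[n]$ forbids isolated $[n]$-vertices, and essentiality $\bigvee_i \tau(W_i)=1_n$ is precisely the connectedness of $G$. A connected graph on $k+n$ vertices has at least $k+n-1$ edges, so $\sum_i m_i \ge k+n-1$, giving $n\le \sum_i m_i-(k-1)$. The bound is attained by the tree obtained by choosing each $W_i$ to share exactly one element with $W_1\cup\cdots\cup W_{i-1}$. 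The main obstacle I anticipate is the bookkeeping in the induction, specifically verifying term-by-term that the $\pi=1_k$ summand really isolates the two unknowns and nothing else; the degree bound is comparatively clean once the bipartite graph translation is in place.
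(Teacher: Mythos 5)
Your proof is correct, and it splits into two halves that relate differently to the paper's argument. For the main identity $s_k^{(n)}[c_{m_1},\dots,c_{m_k}](0)=\#S^{(n)}_{(m_1,\dots,m_k)}$ you use exactly the paper's ingredients --- Lemma \ref{prop:coefficients_r} identifying $r_k^{(n)}(0)=\#R^{(n)}_{(m_1,\dots,m_k)}$, the decomposition \eqref{eq:R_count}, and the moment--cumulant relation $r_k=\sum_\pi s_\pi$ --- but where the paper packages the comparison at the generating-function level (defining $\tilde{s}_k[m_1,\dots,m_k](z)=\sum_n \#S^{(n)}z^n/n!$ and invoking uniqueness in the M\"obius inversion formula), you unfold the same inversion by hand: Leibniz at $z=0$, the observation $s_{|V|}(0)=0$ (correctly derived from $r_k[f](0)=0$, since $M[f](t,0)=1$), and induction on $k$ in which the $\pi=1_k$ term isolates the two unknowns while the inductive hypothesis cancels everything else. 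These are logically equivalent, yours being the coefficientwise rendering of the paper's one-line appeal to inversion uniqueness. Where you genuinely depart from the paper is the degree statement: the paper merely asserts, ``by using the induction,'' that no essential tuples exist for $n>M-(k-1)$, and attainment of the degree is only implicit there, following later from Proposition \ref{prop:T} and Corollary \ref{cor:lead_s} (which show $\#S^{(M-(k-1))}=\binom{M-k}{m_1-1,\dots,m_k-1}(M-(k-1))^{k-1}>0$). Your bipartite-graph encoding --- essentiality plus covering is connectedness of the incidence graph on $k+n$ vertices with $\sum_i m_i$ edges, whence $n\le M-(k-1)$ --- is a clean, self-contained replacement for that assertion, and your explicit overlapping-interval tree construction settles attainment directly inside the proposition rather than deferring it; this is arguably a more complete treatment of the ``in particular'' clause than the paper's own. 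One cosmetic caution: your intermediate phrase about the two double sums matching ``term-by-term'' is not something you know a priori --- you only know both equal $\#R^{(n)}$ --- but your induction step uses them correctly, equating the $\pi=1_k$ terms only after the inductive hypothesis cancels the rest, so no actual gap arises.
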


\begin{proof}
  By using the induction, it is not difficult to see that there are no essential tuples in $R_{(m_1,\dots,m_k)}^{(n)}$ if $n > \sum_{i=1}^{k}m_i - (k-1)$, which means $\# S_{(m_1,\dots,m_k)}^{(n)} =0$.

  Define the polynomials 
  $$
  \tilde{s}_k[{m_1},\dots, {m_k}](z) = \sum_{n=1}^{\infty} \frac{\# S_{(m_1,\dots,m_k)}^{(n)}}{n!} z^n.
  $$
  For the conclusion, it suffices to show that $s_k[c_{m_1},\dots, c_{m_k}](z) = \tilde{s}_k[{m_1},\dots, {m_k}](z)$, which is equivalent to 
  \begin{equation}\label{eq:kome}
    r_k[c_{m_1},\dots, c_{m_k}](z) =  \sum_{\pi \in \calP(k)} \tilde{s}_\pi[{m_1},\dots, {m_k}](z)
  \end{equation}
  because of the M\"{o}bius inversion formula.
  By Lemma \ref{prop:coefficients_r}, Equation \eqref{eq:kome} is equivalent to
  \begin{equation*} 
    \#R_{(m_1,\dots,m_k)}^{(n)} = \sum_{\pi = \{V_1, \dots, V_l\} \in \calP(k)}\sum_{\substack{i_1,\dots,i_l \ge 1\\i_{1} + \cdots + i_{l}=n}} \binom{n}{i_1, \dots, i_l} \prod_{j=1}^l \# S_{V_j}^{(i_j)}.
  \end{equation*}
  for $n\in\N$, which is exactly identical to Equation \eqref{eq:R_count}. 
\end{proof}

Now, the last problem is to determine the leading coefficient of $s_k[c_{m_1},\dots, c_{m_k}](z)$, i.e., to count $S_{(m_1,\dots,m_k)}^{(M-(k-1))}$ where $M = \sum_{i=1}^k m_i $.
The main strategy is to use the mathematical induction, which requires a slight modification of $S_{(m_1,\dots,m_k)}^{(M-(k-1))}$.

\begin{definition} \label{def:T}
  Let $\{m_i\}_{i=1}^k$ and $\{l_i\}_{i=1}^{M-(k-1)}$ be sequences of positive integers and $L:=l_1+\cdots+ l_{M-(k-1)}$. Define
  \[
    T_{(m_1,\dots,m_k)}^{(l_1, \dots,l_{M-(k-1)})} := \left\{(W_1,\dots,W_k) \in \mathcal{B}(L)^{k} \;\middle|\;  |W_{i}| = m_{i},\, \bigvee_{i=1}^k \tau(W_i) \vee \widehat{0}_{M-(k-1)} = 1_L \right\},
  \]
  where $\widehat{0}_{M-(k-1)}:=\{ \{1,\dots, l_1\}, \{l_1+1,\dots, l_1+l_2\},\dots, \{\sum_{i=1}^{M-(k-1)-1}l_i+1, \dots, \sum_{i=1}^{M-(k-1)} l_i\}\}$. 
\end{definition}

It is clear from Definitions \ref{def:S} and \ref{def:T} that $S_{(m_1,\dots,m_k)}^{(M-(k-1))}$ is a specific case of $T_{(m_1,\dots,m_k)}^{(l_1, \dots,l_{M-(k-1)})}$.
That is,
\begin{equation} \label{eq:star}
  S_{(m_1,\dots,m_k)}^{(M-(k-1))} = T_{(m_1,\dots,m_k)}^{(1,\dots, 1)},
\end{equation}
where $(1,\dots, 1)$ is a $(M-(k-1))$-tuple which consists only of $1$.

\begin{example}\label{ex:smallcase_T}
  Let us look at examples for small $k$.
  \begin{itemize}
  \item For any positive integers $m_1$ and $l_1,\dots, l_{m_1}$, one has
\[
  \# T_{(m_1)}^{(l_1, \dots,l_{m_1})} = l_1 \cdots l_{m_1}.
\]
\item For any positive integers $m_1,m_2$ and $l_1,\dots, l_{m_1+m_2-1}$, one has
\[
  \# T^{(l_1,\dots, l_{m_1 + m_2 -1})}_{(m_1,m_2)} = \sum_{\substack{I\subset [m_1+m_2-1]\\|I| = m_1}} 
    \left(\prod_{i\in I} l_i\right) \# T^{(l'_1,\dots,l'_{m_2})}_{(m_2)},
\]
where $l'_1= \sum_{i\in I} l_i$ and $\{l'_2,\dots,l'_{m_2}\} = \{l_i \mid i \in [m_1+m_2-1] \setminus I\}$.
Thus,
\[
  \begin{split}
    \# T^{(l_1,\dots, l_{m_1 + m_2 -1})}_{(m_1,m_2)} &= \sum_{\substack{I\subset [m_1+m_2-1]\\|I| = m_1}}  \left(\prod_{i=1}^{m_1+m_2-1}l_i \right)\sum_{i\in I} l_i \\
    &= \left(\prod_{i=1}^{m_1+m_2-1}l_i\right) \binom{m_1+m_2-2}{m_1-1}\sum_{i=1}^{m_1+m_2-1}l_i.
  \end{split}
\]
\end{itemize}
\end{example}
Applying the counting technique used in the above example to the general case, we obtain the following results.
\begin{proposition}\label{prop:T}
Let $\{m_i\}_{i=1}^k$ and $\{l_j\}_{j=1}^{M-(k-1)}$ be sequences of positive integers where $M=\sum_{i=1}^km_i$. Then
  \begin{align}\label{eq:T}
  \# T^{(l_1,\dots, l_{M-(k-1)})}_{(m_1,\dots,m_k)} = \left(\prod_{i=1}^{M-(k-1)}l_i\right) \binom{M-k}{m_1-1, \dots, m_k-1}\left(\sum_{i=1}^{M - (k-1)}l_i \right)^{k-1}.
\end{align}
\end{proposition}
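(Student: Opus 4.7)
The plan is to prove Proposition~\ref{prop:T} by induction on $k$, preceded by a short structural rigidity observation. The base case $k=1$ is immediate: any $W_1 \in T_{(m_1)}^{(l_1,\ldots,l_{m_1})}$ must meet every block of $\widehat{0}_{m_1}$ (otherwise the join falls short of $1_L$), and since $|W_1|=m_1$ equals the number of blocks, $W_1$ selects exactly one element per $B_j$, yielding $\prod_j l_j$ configurations; the right-hand side of the proposition reduces to this quantity because $\binom{m_1-1}{m_1-1}=1$ and the exponent on $\sum_j l_j$ is zero.

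Before treating the inductive step, I would establish the following rigidity: for every $(W_1,\ldots,W_k) \in T_{(m_1,\ldots,m_k)}^{(l_1,\ldots,l_{n'})}$ with $n'=M-(k-1)$, setting $I_i := \{\,j : W_i \cap B_j \neq \emptyset\,\}$, we have $|I_i|=m_i$ and $|W_i\cap B_j|=1$ for every $j\in I_i$. The defining condition $\bigvee_i \tau(W_i) \vee \widehat{0}_{n'} = 1_L$ is equivalent to the hypergraph $\mathcal{H}$ on $[n']$ with hyperedges $I_1,\ldots,I_k$ being connected and spanning $[n']$. A standard component-counting argument (each hyperedge reduces the number of components by at most $|I_i|-1$) gives $\sum_i(|I_i|-1)\ge n'-1$, hence $\sum_i |I_i|\ge M$; on the other hand $|I_i|\le |W_i|=m_i$ forces $\sum_i |I_i|\le M$. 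Equality throughout yields $|I_i|=m_i$ and $|W_i\cap B_j|=1$ for $j\in I_i$.

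For the inductive step, I would fix $W_k$ by first choosing $I_k\subset[n']$ with $|I_k|=m_k$ and then one element from each block $B_j$ with $j\in I_k$, contributing $\prod_{j\in I_k}l_j$. Contracting the blocks $\{B_j : j\in I_k\}$ into a single block $B^*$ of size $l^* = \sum_{j\in I_k}l_j$, the connectedness condition on $(W_1,\ldots,W_k)$ in the original setup is equivalent to the $T$-condition on $(W_1,\ldots,W_{k-1})$ in the contracted setup, because $W_k$ by itself merges all the blocks indexed by $I_k$. The new block count $n'-m_k+1 = M-m_k-k+2$ equals $(\sum_{i<k}m_i)-(k-2)$, which matches the shape required by the inductive hypothesis, so the number of completions is $l^* \prod_{j\notin I_k} l_j\cdot\binom{M-m_k-k+1}{m_1-1,\ldots,m_{k-1}-1}\bigl(\sum_j l_j\bigr)^{k-2}$.

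Summing over $I_k$ and factoring out everything independent of $I_k$ leaves the sum $\sum_{|I_k|=m_k}\sum_{j\in I_k}l_j$, which by swapping the order of summation equals $\binom{n'-1}{m_k-1}\sum_j l_j = \binom{M-k}{m_k-1}\sum_j l_j$. The multinomial identity $\binom{M-m_k-k+1}{m_1-1,\ldots,m_{k-1}-1}\binom{M-k}{m_k-1}=\binom{M-k}{m_1-1,\ldots,m_k-1}$ then delivers exactly the claimed formula. The only delicate point is the rigidity observation; once connectivity is seen to force $|I_i|=m_i$ for every $i$, the remainder is a transparent contraction-and-induction followed by a single multinomial identity, and no further combinatorial input is required.
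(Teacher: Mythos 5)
Your proof is correct and follows essentially the same route as the paper's: induction on $k$, peeling off one tuple entry, contracting the $m$ blocks it meets into a single block of size $\sum_{j\in I}l_j$, then evaluating $\sum_{|I|=m}\sum_{j\in I}l_j=\binom{M-k}{m-1}\sum_j l_j$ and finishing with the multinomial identity (that you remove $W_k$ where the paper removes $W_1$ is immaterial by symmetry). Your one genuine addition is the explicit rigidity lemma --- the hypergraph component count forcing $|I_i|=m_i$ and $|W_i\cap B_j|=1$ --- which the paper's recursion, stated with only a ``Note that,'' uses implicitly; spelling it out is a worthwhile strengthening rather than a different method.
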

\begin{proof}
  We use the induction for $k$ as follows.
  Formula \eqref{eq:T} holds for $k=1$ because we mentioned in Example \ref{ex:smallcase_T}.
  Assume Formula \eqref{eq:T} holds up to $k-1$.
  Note that
  \begin{equation*}
  \# T^{(l_1,\dots, l_{M-(k-1)})}_{(m_1,\dots,m_k)} =\sum_{\substack{I\subset [M-(k-1)]\\|I| = m_1}} \left(\prod_{i\in I} l_i\right) \# T^{(l'_1,\dots,l'_{m_2+\cdots+m_k-(k-1)})}_{(m_2,\dots,m_k)},
  \end{equation*}
  where  $l'_1= \sum_{i\in I} l_i$ and $\{l'_2,\dots,l'_{m_2+\cdots+m_k-(k-1)}\} =\{ l_i \mid i\in [M-(k-1)] \setminus I$\}.
  Thus, by the induction hypothesis \eqref{eq:T},
  \begin{align*}
    &\# T^{(l_1,\dots, l_{M-(k-1)})}_{(m_1,\dots,m_k)} \\
    &= \sum_{\substack{I\subset [M-(k-1)]\\|I| = m_1}} \left(\prod_{i=1}^{M - (k-1)}l_i\right) \left(\sum_{i\in I} l_i\right) \binom{m_2+\cdots+m_k-(k-1)}{m_2-1, \dots, m_k-1}\left(\sum_{i=1}^{M - (k-1)}l_i \right)^{k-2}\\
    &= \left(\prod_{i=1}^{M - (k-1)}l_i\right) \binom{m_2+\cdots+m_k-(k-1)}{m_2-1, \dots, m_k-1} \left(\sum_{i=1}^{M - (k-1)}l_i \right)^{k-2} \sum_{\substack{I\subset [M-(k-1)]\\|I| = m_1}}  \left(\sum_{i\in I} l_i\right) \\
    &= \left(\prod_{i=1}^{M - (k-1)}l_i\right) \binom{m_2+\cdots+m_k-(k-1)}{m_2-1, \dots, m_k-1} \left(\sum_{i=1}^{M - (k-1)}l_i \right)^{k-2}\binom{M-k}{m_1-1} \left(\sum_{i=1}^{M - (k-1)}l_i \right) \\
    &= \left(\prod_{i=1}^{M - (k-1)}l_i\right) \binom{M-k}{m_1-1, \dots, m_k-1}\left(\sum_{i=1}^{M - (k-1)}l_i \right)^{k-1}.
  \end{align*}
\end{proof}

\begin{corollary} \label{cor:lead_s}
Let $\{m_i\}_{i=1}^k$ be a sequence of positive integers and $M=\sum_{i=1}^k m_i$. Then
  \[
    \lead (s_k[c_{m_1}, \dots, c_{m_k}]) = \frac{(M-(k-1))^{k-2}}{\prod_{i=1}^k (m_i-1)!}.
  \]
\end{corollary}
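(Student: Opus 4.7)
The plan is to unwind the definition of the leading coefficient in terms of the combinatorial data already computed, and then apply Proposition~\ref{prop:T} with all $l_i$ set equal to $1$.

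First, I would observe that by Proposition~\ref{prop:s} the polynomial $s_k[c_{m_1},\dots,c_{m_k}](z)$ has degree exactly $M-(k-1)$, so its leading coefficient is
\[
\lead(s_k[c_{m_1},\dots,c_{m_k}]) \;=\; \frac{s_k^{(M-(k-1))}[c_{m_1},\dots,c_{m_k}](0)}{(M-(k-1))!} \;=\; \frac{\#S_{(m_1,\dots,m_k)}^{(M-(k-1))}}{(M-(k-1))!}.
\]
The entire problem then reduces to counting the essential tuples in $R_{(m_1,\dots,m_k)}^{(M-(k-1))}$.

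Second, I would invoke the identification \eqref{eq:star}, namely
\[
S_{(m_1,\dots,m_k)}^{(M-(k-1))} \;=\; T_{(m_1,\dots,m_k)}^{(1,\dots,1)},
\]
where the tuple $(1,\dots,1)$ has length $M-(k-1)$. Substituting $l_1=\cdots=l_{M-(k-1)}=1$ into the formula of Proposition~\ref{prop:T} gives
\[
\#T_{(m_1,\dots,m_k)}^{(1,\dots,1)} \;=\; \binom{M-k}{m_1-1,\,\dots,\,m_k-1}\,(M-(k-1))^{k-1}.
\]

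Finally, I would plug this count into the expression for the leading coefficient and simplify. Writing the multinomial coefficient as $(M-k)!/\prod_{i=1}^k (m_i-1)!$ and using $(M-(k-1))! = (M-(k-1))\cdot (M-k)!$, one cancellation gives
\[
\lead(s_k[c_{m_1},\dots,c_{m_k}]) \;=\; \frac{(M-k)!\,(M-(k-1))^{k-1}}{\prod_{i=1}^k (m_i-1)!\,\cdot (M-(k-1))\,(M-k)!} \;=\; \frac{(M-(k-1))^{k-2}}{\prod_{i=1}^k (m_i-1)!},
\]
which is the desired formula. There is no real obstacle here: all the combinatorial heavy lifting has already been packaged into Propositions~\ref{prop:s} and~\ref{prop:T}, and the corollary is a routine bookkeeping specialization.
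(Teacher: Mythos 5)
Your proposal is correct and is essentially identical to the paper's own proof: both deduce $\lead(s_k[c_{m_1},\dots,c_{m_k}]) = \#S_{(m_1,\dots,m_k)}^{(M-(k-1))}/(M-(k-1))!$ from Proposition \ref{prop:s}, apply Equation \eqref{eq:star} and Proposition \ref{prop:T} with $l_1=\cdots=l_{M-(k-1)}=1$ to get $\binom{M-k}{m_1-1,\dots,m_k-1}(M-(k-1))^{k-1}$, and then perform the same factorial cancellation. Your bookkeeping, including the identity $(M-(k-1))!=(M-(k-1))\cdot(M-k)!$, checks out exactly.
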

\begin{proof}
Recall that $\deg( s_k[c_{m_1},\cdots, c_{m_k}])=M-(k-1)$ and $s_k^{(M-(k-1))}[c_{m_1}, \dots, c_{m_k}](0) = \# S_{(m_1,\dots,m_k)}^{(M - (k-1))}$ by Proposition \ref{prop:s}.
Also, Equation \eqref{eq:star} and Proposition \ref{prop:T} imply that
 \[
  \begin{split}
    \# S_{(m_1,\dots,m_k)}^{(M - (k-1))} &= \# T_{(m_1,\dots,m_k)}^{(1,\dots,1)}\\
    &=\binom{M-k}{m_1-1, \dots, m_k-1}(M-(k-1))^{k-1}\\
    &=\frac{(M-(k-1))!}{\prod_{i=1}^k (m_i-1)!}\cdot (M-(k-1))^{k-2}.
  \end{split}
  \]
\end{proof}

\subsection{New combinatorial formula} \label{sec: NewComb}
\begin{theorem}[see Theorem \ref{main1}]\label{thm:Combinatorics}
  Suppose that $f_i \in \C[x]_0$ with $\deg (f_i)=m_i$ for each $1 \le i \le k$, and let $M=\sum_{i=1}^k m_i$.  Then we have
  \begin{align*}
  \sum_{\pi \in \mathcal{P}(n)} \prod_{i=1}^k\left(  \sum_{V\in \pi} f_i(|V|) \right)\mu_n(\pi,1_n)=\begin{cases}
  (n-1)!  n^{k-1} \prod_{i=1}^km_i\lead (f_i), & n=M-(k-1), \\
  0, & n>M-(k -1).
  \end{cases}
  \end{align*}
\end{theorem}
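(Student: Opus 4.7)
The plan is to leverage the entire machinery already developed in Section 3, namely Proposition \ref{prop:multiple}, the multi-linearity established in Lemma \ref{prop:multi-linear}, Proposition \ref{prop:s}, and Corollary \ref{cor:lead_s}. First, by Proposition \ref{prop:multiple}, the sum on the left-hand side equals $s_k^{(n)}[f_1,\dots,f_k](0)$, so the theorem reduces to determining the degree and leading coefficient of the polynomial $s_k[f_1,\dots,f_k](z)$.

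Next, I would exploit the fact that $\{c_m\}_{m\ge 1}$, where $c_m(x)=\binom{x}{m}$, is a basis of $\C[x]_0$. Since $\deg(f_i)=m_i$, each $f_i$ can be written as
\[
f_i = \sum_{j=1}^{m_i} \alpha_{i,j}\, c_j, \qquad \text{with } \alpha_{i,m_i} = m_i!\,\lead(f_i),
\]
because $\lead(c_{m_i}) = 1/m_i!$. By the multi-linearity of $(f_1,\dots,f_k)\mapsto s_k[f_1,\dots,f_k](z)$ given in Lemma \ref{prop:multi-linear}, this yields
\[
s_k[f_1,\dots,f_k](z) = \sum_{j_1=1}^{m_1}\cdots \sum_{j_k=1}^{m_k} \alpha_{1,j_1}\cdots \alpha_{k,j_k}\, s_k[c_{j_1},\dots,c_{j_k}](z).
\]

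By Proposition \ref{prop:s}, each $s_k[c_{j_1},\dots,c_{j_k}](z)$ has degree $\sum_i j_i -(k-1)$, which is maximized (uniquely) by the choice $j_i=m_i$ for all $i$, giving degree $M-(k-1)$. Consequently $\deg(s_k[f_1,\dots,f_k])\le M-(k-1)$, which immediately proves the vanishing statement for $n>M-(k-1)$, since then $s_k^{(n)}[f_1,\dots,f_k](0)=0$.

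Finally, for $n=M-(k-1)$ only the top-degree term contributes, so by Corollary \ref{cor:lead_s},
\[
\lead(s_k[f_1,\dots,f_k]) = \prod_{i=1}^k \bigl(m_i!\,\lead(f_i)\bigr)\cdot \frac{(M-(k-1))^{k-2}}{\prod_{i=1}^k (m_i-1)!} = n^{k-2}\prod_{i=1}^k m_i\lead(f_i).
\]
Since $s_k^{(n)}[f_1,\dots,f_k](0) = n!\cdot \lead(s_k[f_1,\dots,f_k])$ and $n!\cdot n^{k-2} = (n-1)!\,n^{k-1}$, we obtain
\[
s_k^{(n)}[f_1,\dots,f_k](0) = (n-1)!\,n^{k-1}\prod_{i=1}^k m_i\lead(f_i),
\]
as required. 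The argument is essentially a clean reduction step; no serious obstacle remains because Section 3 has already isolated all the combinatorial content in the identities for the basis polynomials $c_m$, so the only real work is the linearity bookkeeping and keeping track of the factor $m_i!/(m_i-1)! = m_i$.
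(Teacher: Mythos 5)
Your proposal is correct and follows essentially the same route as the paper's own proof: reduce via Proposition \ref{prop:multiple} to computing the degree and leading coefficient of $s_k[f_1,\dots,f_k](z)$, expand in the basis $\{c_m\}_{m\in\N}$ using the multi-linearity of Lemma \ref{prop:multi-linear}, and then invoke Proposition \ref{prop:s} and Corollary \ref{cor:lead_s}. Your added remark that the maximal degree $M-(k-1)$ is attained only by the term $j_i=m_i$ (so no cancellation of top coefficients can occur) is a small but welcome clarification that the paper leaves implicit.
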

\begin{proof}
  By Proposition \ref{prop:multiple}, the statement is equivalent to the following:
  \begin{itemize}
    \item $\deg (s_k[f_1,\dots,f_k]) = M - (k-1)$;
    \item $\lead (s_k[f_1,\dots,f_k]) = (M - (k-1))^{k-2}\cdot \prod_{i=1}^k m_i\lead (f_i)  $.
  \end{itemize}
  Because the family of polynomials $\{c_m\}_{m\in\N}$ is a basis of $\C[x]_0$, the polynomials $\{f_i\}_{i=1}^k$ can be uniquely expressed as linear combinations of $\{c_m\}_{m\in\N}$:
  \[
    f_i(x) = \sum_{j=1}^{m_i} a_{j}^{(i)} c_j(x)
  \]
  for $1 \le i \le k$.
  Here, note that $c_m(x)$ is the polynomial of degree $m$ and the leading coefficient $1/m!$, and hence $a^{(i)}_{m_i} = m_i! \lead (f_i)$.
  Next, by  {Lemma} \ref{prop:multi-linear},
  \[
    s_k[f_1,\dots,f_k](z) =\sum_{j_1=1}^{m_1} \cdots \sum_{j_k=1}^{m_k} a_{j_1}^{(1)} \cdots a_{j_k}^{(k)} s_k[c_{j_1},\dots, c_{j_k}](z).
  \]
  Then, by Proposition \ref{prop:s},
  \[
    \begin{split}
      \deg (s_k[f_1,\dots, f_k]) &= \deg (s_k[c_{m_1},\dots, c_{m_k}]) \\
      &= M - (k-1).
    \end{split}
  \]
  Hence, by Corollary \ref{cor:lead_s},
  \[
    \begin{split}
      \lead (s_k[f_1,\dots, f_k]) &= a^{(1)}_{m_1} \cdots a^{(k)}_{m_k}\lead (s_k[c_{m_1},\dots, c_{m_k}]) \\
      &= \lead (s_k[c_{m_1},\dots, c_{m_k}]) \prod_{i=1}^k m_i ! \lead (f_i)\\
      &=\frac{(M - (k-1))^{k-2}}{\prod_{i=1}^k (m_i-1)!}  \prod_{i=1}^k m_i ! \lead( f_i)\\
      &= (M - (k-1))^{k-2} \prod_{i=1}^km_i\lead (f_i).
    \end{split}
  \]
\end{proof}


\begin{corollary}\label{ex:Combthm}
For $n,k \in \N \; (n \ge k+1)$,
\begin{align} \label{eq:chV/2}
\sum_{\pi\in \calP(n)} \left(\sum_{V\in \pi} \binom{|V|}{2} \right)^k \mu_n(\pi,1_n) =\begin{cases}
(n-1)!n^{k-1}, & n=k+1,\\
0, & n>k+1,
\end{cases}
\end{align}
and
\begin{align} \label{eq:V^2}
\sum_{\pi\in \calP(n)} \left( \sum_{V\in \pi} |V|^2 \right)^k \mu_n(\pi,1_n) =\begin{cases}
2^k(n-1)!n^{k-1}, & n=k+1,\\
0, & n>k+1.
\end{cases}
\end{align}
These results will be used in the later sections.
\end{corollary}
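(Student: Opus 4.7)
The plan is to derive both identities as direct specializations of Theorem \ref{thm:Combinatorics}, choosing the polynomials $f_1, \dots, f_k$ appropriately in each case. Because the theorem already isolates the top-degree behaviour of $\sum_{\pi} \prod_i(\sum_{V\in\pi} f_i(|V|))\mu_n(\pi,1_n)$, there is nothing left to compute beyond reading off $\deg(f_i)$, $\lead(f_i)$, and checking $f_i(0)=0$.

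For \eqref{eq:chV/2}, I would take $f_i(x) = \binom{x}{2} = \tfrac{x(x-1)}{2}$ for every $i=1,\dots,k$. Then $f_i(0)=0$ so $f_i \in \C[x]_0$, and $\deg(f_i)=2$ with $\lead(f_i)=1/2$. Consequently $M=\sum_{i=1}^k m_i = 2k$ and the critical index is $M-(k-1)=k+1$. Applying Theorem \ref{thm:Combinatorics} gives vanishing for $n>k+1$, and at $n=k+1$ the value is
\[
(n-1)!\, n^{k-1} \prod_{i=1}^k m_i \lead(f_i) = (n-1)!\, n^{k-1} \prod_{i=1}^k \left(2 \cdot \tfrac{1}{2}\right) = (n-1)!\, n^{k-1}.
\]

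For \eqref{eq:V^2}, I would take $f_i(x)=x^2$ for every $i$. Again $f_i(0)=0$, $\deg(f_i)=2$, and now $\lead(f_i)=1$, so $M=2k$ as before and Theorem \ref{thm:Combinatorics} yields vanishing for $n>k+1$, while at $n=k+1$ the value is
\[
(n-1)!\, n^{k-1} \prod_{i=1}^k (2 \cdot 1) = 2^k (n-1)!\, n^{k-1}.
\]

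There is no genuine obstacle here — the corollary is purely an instantiation. The only point that needs a brief word in the write-up is the verification $f_i \in \C[x]_0$ in each case, which is immediate from $\binom{0}{2}=0$ and $0^2=0$.
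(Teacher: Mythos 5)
Your proposal is correct and matches the paper's (implicit) argument exactly: the paper states Corollary \ref{ex:Combthm} as a direct instantiation of Theorem \ref{thm:Combinatorics} with $f_i(x)=\binom{x}{2}$ and $f_i(x)=x^2$, which is precisely what you do, including the checks $M=2k$, $M-(k-1)=k+1$, and $\prod_{i=1}^k m_i\lead(f_i)$ equal to $1$ and $2^k$ respectively. Nothing is missing.
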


Finally, we conclude this section by noting that the combination of Example \ref{ex:coffee}  and Theorem \ref{thm:Combinatorics} leads to the following results as byproducts.
\begin{corollary}\label{cor:counting}
  For positive integers $\{m_i\}_{i=1}^k$, define $M = \sum_{i=1}^k m_i$ and $\widehat{0}_k = \{\{1, \dots, m_1\}, \{m_1+1, \dots, m_1 + m_2\}, \dots, \{M - m_k+1, \dots, M\}\}$.
  Then
  \[
  \# \{\sigma \in \calP(M) \ |\ \sigma \vee \widehat{0}_k = 1_M, |\sigma| = M - (k-1)\} = (M-(k-1))^{k-2} \prod_{i=1}^k m_i.
  \]
\end{corollary}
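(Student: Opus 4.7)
The plan is straightforward: the corollary follows by directly combining the combinatorial description of $s_k[g_{m_1}, \dots, g_{m_k}](z)$ from Example \ref{ex:coffee} with the leading coefficient formula established in Theorem \ref{thm:Combinatorics}.

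First, I would specialize Example \ref{ex:coffee} to extract the key identity. Taking $g_{m_i}(x) = x^{m_i}$ for $1 \le i \le k$, that example shows
\[
s_k[g_{m_1}, \dots, g_{m_k}](z) = \sum_{\substack{\pi \in \calP(M) \\ \pi \vee \widehat{0}_k = 1_M}} z^{|\pi|},
\]
where $M = \sum_{i=1}^k m_i$. Moreover, the example establishes that this polynomial has degree exactly $M-(k-1)$, and its leading coefficient is precisely the cardinality we are asked to compute:
\[
\lead(s_k[g_{m_1}, \dots, g_{m_k}]) = \#\{\pi \in \calP(M) \mid \pi \vee \widehat{0}_k = 1_M,\; |\pi| = M-(k-1)\}.
\]

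Next, I would apply Theorem \ref{thm:Combinatorics} to the family $f_i = g_{m_i}$. These polynomials lie in $\C[x]_0$, have $\deg(g_{m_i}) = m_i$, and $\lead(g_{m_i}) = 1$. Theorem \ref{thm:Combinatorics} (equivalently, the reformulation in terms of $s_k[\,\cdot\,]$ used in its proof) gives
\[
\lead(s_k[g_{m_1}, \dots, g_{m_k}]) = (M-(k-1))^{k-2} \prod_{i=1}^k m_i \lead(g_{m_i}) = (M-(k-1))^{k-2} \prod_{i=1}^k m_i.
\]

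Equating the two expressions for $\lead(s_k[g_{m_1}, \dots, g_{m_k}])$ yields the desired counting formula. There is no real obstacle here: Example \ref{ex:coffee} has already done the combinatorial bookkeeping that identifies the sought-after count with a leading coefficient, and Theorem \ref{thm:Combinatorics} provides a closed-form for that coefficient; the entire argument reduces to matching these two results on the polynomials $g_m(x) = x^m$.
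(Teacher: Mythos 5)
Your proposal is correct and is exactly the paper's argument: the paper derives Corollary \ref{cor:counting} as a byproduct by combining Example \ref{ex:coffee}, which identifies $\#\{\sigma \in \calP(M) \mid \sigma \vee \widehat{0}_k = 1_M,\ |\sigma| = M-(k-1)\}$ with $\lead(s_k[g_{m_1},\dots,g_{m_k}])$, and Theorem \ref{thm:Combinatorics}, whose proof yields $\lead(s_k[f_1,\dots,f_k]) = (M-(k-1))^{k-2}\prod_{i=1}^k m_i \lead(f_i)$. Your appeal to the $s_k$-reformulation inside the proof of Theorem \ref{thm:Combinatorics} is precisely how the paper closes the argument, so there is nothing to add.
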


\begin{remark}
  For the cases $n < M -(k-1)$ in Theorem \ref{thm:Combinatorics}, we could not obtain an explicit formula in general although the cases are not important for the later sections.
  A key to the desired formula is counting 
\[
    \# \{\sigma \in \mathcal{P}(M) \ |\ \sigma \vee \widehat{0}_k = 1_M, |\sigma| = n\},
\]
which is equivalent to determine $s_k^{(n)}[g_{m_1}, \dots, g_{m_k}]$, where $g_m(x) = x^m$ for $m\in\mathbb{N}$.
\end{remark}

\section{Finite free analogue of Sakuma-Yoshida's limit theorem}

In this section, we establish the finite free analogue of the limit theorem by Sakuma and Yoshida \cite{SY}. More precisely, our purpose in this section is to investigate the limit behavior of the sequence of finite free cumulants of 
\[
  D_{1/m}((p_d^{\boxtimes_dm})^{\boxplus_dm}),
\]
as $m\rightarrow\infty$ for $p_d \in \monP(d)$ with nonnegative roots, such that $\kappa_1^{(d)}(p_d)=1$.
Recall that, Sakuma and Yoshida investigated the asymptotic expansion of S-transform, in contrast to that, Arizmendi and Vargas \cite{AV} gave another proof by focusing on the combinatorial structure of the non-crossing partitions.
Here, we will take the latter approach, i.e., the convergence of finite free cumulants.

Suppose first that degree $d$ is fixed.
According to Equation \eqref{eq:kappa_D} and Proposition \ref{thm:newlimit}, we have
\[
  \kappa_n^{(d)} \left(D_{1/m}((p_d^{\boxtimes_dm})^{\boxplus_dm})\right)=\frac{1}{m^{n-1}} \kappa_n^{(d)}(p_d^{\boxtimes_dm})\xrightarrow{m\rightarrow\infty} 0
\]
for $n=2, \dots, d$, which implies $D_{1/m}((p_d^{\boxtimes_dm})^{\boxplus_dm})(x)\rightarrow (x-1)^d$ as $m\rightarrow\infty$.
Hence this is not an interesting result.
In order to observe non-trivial limits of finite free cumulant, we consider the following two situations of $m\rightarrow\infty$ with (i) $m/d\rightarrow t$ for some $t>0$, and (ii) $m/d\rightarrow0$. 

In the following, we consider $p_d \in \monP(d)$ with nonnegative roots.
Additionally, we assume that
\begin{enumerate}
\item[(A-1)] $\kappa_1^{(d)}(p_d)=1$, that is, $\widetilde{a}_1(p_d)=1$;
\item[(A-2)] there exists a probability measure $\mu$ with compact support such that $\mu\llbracket p_d \rrbracket\xrightarrow{w} \mu$ as $d\rightarrow\infty$.
\end{enumerate}
We define 
$$
e_n(t, \mu) := \exp\left(-t\binom{n}{2}\kappa_2(\mu)\right), \qquad t >0
$$
for $n \in \N$. Note that, for $\pi \in \calP(n)$,
\[
e_\pi (t,\mu)=\prod_{V\in \pi} \exp\left(-t\binom{|V|}{2} \kappa_2(\mu)\right)=\exp\left(-t \sum_{V\in \pi}\binom{|V|}{2} \kappa_2(\mu)\right).
\]

\subsection{Case of $m/d\rightarrow t$ for some $t>0$}

In this section, we consider the case when a ratio of a number $m$ of finite free multiplicative convolution and degree $d$ of polynomial converges to some $t>0$ as $d,m\rightarrow \infty$, that is, $m/d \rightarrow t$.
Our conclusion in this section is the following.

\begin{proposition}\label{thm:m=td}
  Let us consider $p_d\in\mathbb{P}_{\mathrm{mon}}(d)$ satisfying (A-1) and (A-2). For $n \in \N$,
  \[
    \kappa_n^{(d)} \left(D_{1/m}((p_d^{\boxtimes_dm})^{\boxplus_dm})\right) \rightarrow \frac{(-1)^{n-1}}{t^{n-1}(n-1)!} \sum_{\pi \in \calP(n)} e_\pi (t,\mu) \mu_n(\pi,1_n),
  \]
  as $d,m\rightarrow \infty$ with $m/d\rightarrow t$ for some $t>0$.
\end{proposition}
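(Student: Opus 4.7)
The plan is to reduce the statement to the asymptotic behaviour of the individual coefficients $\widetilde{a}_k(p_d)$ and then use the scaling $m/d\to t$ to identify $\widetilde{a}_k(p_d)^m$ with $e_k(t,\mu)$.

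First, I would combine three facts already present in the paper: the dilation identity \eqref{eq:kappa_D}, the $\boxplus_d$-additivity $\kappa_n^{(d)}(p^{\boxplus_dm})=m\,\kappa_n^{(d)}(p)$, and Lemma \ref{lem:finite_free_multi}, which yields $\widetilde{a}_k(p_d^{\boxtimes_dm})=\widetilde{a}_k(p_d)^m$ for every $k$. Inserting this last identity into the definition \eqref{def:cumulant} and collecting the other prefactors gives
\[
\kappa_n^{(d)}\!\left(D_{1/m}\!\left((p_d^{\boxtimes_dm})^{\boxplus_dm}\right)\right)=\frac{(-d)^{n-1}}{m^{n-1}(n-1)!}\sum_{\pi\in\calP(n)}\widetilde{a}_\pi(p_d)^m\,\mu_n(\pi,1_n),
\]
where $\widetilde{a}_\pi(p_d):=\prod_{V\in\pi}\widetilde{a}_{|V|}(p_d)$. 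The problem therefore reduces to finding the limit of $\widetilde{a}_\pi(p_d)^m$.

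Next, I would expand $\widetilde{a}_k(p_d)$ via the moment--cumulant formula (the M\"obius inversion of \eqref{def:cumulant}, used in the proof of Proposition \ref{prop:cumulant_formula}):
\[
\widetilde{a}_k(p_d)=\sum_{\sigma\in\calP(k)}d^{|\sigma|-k}\mu_k(0_k,\sigma)\,\kappa_\sigma^{(d)}(p_d).
\]
Under (A-1) the term $\sigma=0_k$ contributes $\kappa_1^{(d)}(p_d)^k=1$; the $\binom{k}{2}$ partitions with $|\sigma|=k-1$ each contribute $-\kappa_2^{(d)}(p_d)/d$ (since $\mu_k(0_k,\sigma)=-1$ for such $\sigma$); every remaining $\sigma$ carries a factor $d^{-j}$ with $j\ge 2$. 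Assumption (A-2) combined with Proposition \ref{prop:converges_freecumulant} implies $\kappa_\ell^{(d)}(p_d)\to\kappa_\ell(\mu)$ for every $\ell$, and compactness of $\mathrm{supp}(\mu)$ makes $\kappa_\ell(\mu)$ finite; hence the family $\{\kappa_\ell^{(d)}(p_d)\}_{d\ge\ell}$ is uniformly bounded for each fixed $\ell$. Consequently
\[
\widetilde{a}_k(p_d)=1-\binom{k}{2}\,\frac{\kappa_2^{(d)}(p_d)}{d}+O(d^{-2}),
\]
with implicit constant depending only on $k$.

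Taking logarithms (legitimate once $d$ is large enough that $\widetilde{a}_k(p_d)$ is close to $1$) and multiplying by $m$ gives $m\log\widetilde{a}_k(p_d)=-(m/d)\binom{k}{2}\kappa_2^{(d)}(p_d)+O(m/d^2)$; since $m/d\to t$ and $m/d^2\to 0$, it follows that $\widetilde{a}_k(p_d)^m\to\exp\!\left(-t\binom{k}{2}\kappa_2(\mu)\right)=e_k(t,\mu)$. Taking products over the blocks of $\pi$ gives $\widetilde{a}_\pi(p_d)^m\to e_\pi(t,\mu)$ for each $\pi\in\calP(n)$. Because $\calP(n)$ is finite one may exchange sum and limit, and the prefactor $(-d/m)^{n-1}$ tends to $(-1/t)^{n-1}$, producing the claimed identity.

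The main technical point will be justifying the uniform $O(d^{-2})$ control in the expansion of $\widetilde{a}_k(p_d)$: pointwise convergence of the cumulants alone is not enough, because the higher-order terms involve products of the $\kappa_\ell^{(d)}(p_d)$ and need $d$-uniform bounds to remain $O(d^{-j})$. The compact-support hypothesis on $\mu$, funnelled through Proposition \ref{prop:converges_freecumulant}, is precisely what supplies this uniform control.
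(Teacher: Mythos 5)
Your proposal is correct and follows essentially the same route as the paper: both expand $\widetilde{a}_k(p_d)$ via the cumulant formula $\widetilde{a}_k(p_d)=\sum_{\sigma\in\calP(k)}d^{|\sigma|-k}\mu_k(0_k,\sigma)\kappa_\sigma^{(d)}(p_d)$ to get $\widetilde{a}_k(p_d)=1-\binom{k}{2}\kappa_2^{(d)}(p_d)/d+O(d^{-2})$, and then deduce $\widetilde{a}_k(p_d)^m\to e_k(t,\mu)$ under $m/d\to t$ (the paper via the rewriting $\{(\cdot)^d\}^{m/d}$, you via logarithms --- an immaterial difference). Your explicit assembly of the prefactor $(-d/m)^{n-1}\to(-1/t)^{n-1}$ and the exchange of limit with the finite sum over $\calP(n)$, as well as the remark that uniform boundedness of $\{\kappa_\ell^{(d)}(p_d)\}_d$ for each fixed $\ell$ (from Proposition \ref{prop:converges_freecumulant}) underwrites the $O(d^{-2})$ term, simply make explicit what the paper leaves implicit.
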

\begin{proof}
By Lemma \ref{lem:finite_free_multi}, we have
\begin{align*}
  \widetilde{a}_n(p_d^{\boxtimes_d m})  =\widetilde{a}_n(p_d)^m  = \left( \kappa_1^{(d)}(p_d)^n - \binom{n}{2}\frac{\kappa_1^{(d)}(p_d)^{n-2}\kappa_2^{(d)}(p_d)}{d} + O(d^{-2}) \right)^m.
\end{align*}
Since $\kappa_1^{(d)}(p_d) = 1$ by (A-1) and $\kappa_2^{(d)}(p_d)\rightarrow \kappa_2(\mu)$ as $d\rightarrow\infty$ by (A-2), we then obtain
\begin{equation} \label{eq:apple}
\begin{split}
  \widetilde{a}_n(p_{d}^{\boxtimes_d m}) 
  &= \left( 1 - \binom{n}{2}\frac{\kappa_2^{(d)}(p_d)}{d} + O(d^{-2}) \right)^m \\
  &=\left\{\left( 1 - \binom{n}{2}\frac{\kappa_2^{(d)}(p_d)}{d} + O(d^{-2}) \right)^{d}\right\}^{\frac{m}{d}}\\
  &\rightarrow \exp\left(-t \binom{n}{2} \kappa_2(\mu) \right),
  \end{split}
\end{equation}
as $m \rightarrow \infty$ with $m/d\rightarrow t$.
\end{proof}

Let us denote by $\{\kappa_n(t,\mu)\}_{n\geq1}$ the above limit of finite free cumulants.

\begin{corollary}
For $n\in \N$, we have
\[
\lim_{t\rightarrow 0^+} \kappa_n(t,\mu)= \frac{(\kappa_2(\mu) n)^{n-1}}{n!}.
\]
\end{corollary}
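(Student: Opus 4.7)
The plan is to Taylor-expand the exponential $e_\pi(t,\mu)$ in powers of $t$, exchange the order of summation, and then apply the first identity in Corollary~\ref{ex:Combthm} to show that only one term survives in the limit $t\to 0^+$. Write $c:=\kappa_2(\mu)$ for brevity, and expand
\[
  e_\pi(t,\mu) = \sum_{k=0}^{\infty} \frac{(-tc)^k}{k!}\left(\sum_{V\in\pi}\binom{|V|}{2}\right)^{\!k}.
\]
Since for each fixed $n$ the sum is over the finite set $\calP(n)$, this series is analytic in $t$ and may be interchanged with $\sum_{\pi \in \calP(n)}$, yielding
\[
  \sum_{\pi\in\calP(n)} e_\pi(t,\mu)\mu_n(\pi,1_n) = \sum_{k=0}^{\infty}\frac{(-tc)^k}{k!}A_{n,k},
  \qquad A_{n,k}:=\sum_{\pi\in\calP(n)}\left(\sum_{V\in\pi}\binom{|V|}{2}\right)^{\!k}\mu_n(\pi,1_n).
\]

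Next, I would invoke identity \eqref{eq:chV/2} of Corollary~\ref{ex:Combthm}, which asserts that $A_{n,k}=0$ whenever $n>k+1$ (i.e.\ $k<n-1$) and that $A_{n,n-1}=(n-1)!\,n^{n-2}$. Hence all terms with $k\le n-2$ vanish, the $k=n-1$ term contributes $(-tc)^{n-1}n^{n-2}$, and the remaining terms are of order $t^n$ or higher as $t\to 0^+$. Substituting into the explicit formula for $\kappa_n(t,\mu)$ from Proposition~\ref{thm:m=td} gives
\[
  \kappa_n(t,\mu) = \frac{(-1)^{n-1}}{t^{n-1}(n-1)!}\left[(-tc)^{n-1}n^{n-2} + O(t^n)\right]
  = \frac{c^{n-1}n^{n-2}}{(n-1)!} + O(t).
\]
Taking $t\to 0^+$ and simplifying $\frac{c^{n-1}n^{n-2}}{(n-1)!}=\frac{(cn)^{n-1}}{n!}$ gives the claimed limit.

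There is no substantial obstacle here once Corollary~\ref{ex:Combthm} is in hand; the only point requiring mild care is justifying the termwise limit, which is immediate because the series in $t$ reduces to a polynomial of degree $\binom{n}{2}$ in the exponent (so only finitely many $A_{n,k}$ matter for leading behavior), and the vanishing statement $A_{n,k}=0$ for $k<n-1$ is precisely the input that forces the $1/t^{n-1}$ prefactor to be cancelled exactly.
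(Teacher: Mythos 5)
Your proposal is correct and is essentially the paper's argument: both rest on the identity \eqref{eq:chV/2} of Corollary \ref{ex:Combthm} to annihilate the coefficients of $t^k$ for $k\le n-2$ and to evaluate the coefficient of $t^{n-1}$ as $(n-1)!\,n^{n-2}$, so that the prefactor $t^{-(n-1)}$ in Proposition \ref{thm:m=td} is cancelled exactly. The only difference is presentational: you extract the leading term by Taylor-expanding $e_\pi(t,\mu)$ and interchanging the (finite) sum over $\calP(n)$, whereas the paper applies L'H\^opital's rule $n-1$ times together with the derivative formula $\frac{\partial^k}{\partial t^k}e_\pi(t,\mu)=\bigl(-\sum_{V\in\pi}\binom{|V|}{2}\kappa_2(\mu)\bigr)^k e_\pi(t,\mu)$ -- two equivalent ways of reading off the same coefficient.
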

\begin{proof}
It is easy to see that $\lim_{t\rightarrow 0^+} e_\pi(t,\mu)=1$. Note that 
  \[
  \sum_{\pi\in \calP(n)} \mu_n(\pi, 1_n)=0
  \]
  and 
  \[
  \frac{\partial^{k}}{\partial t^{k}} e_\pi (t,\mu)= \left(- \sum_{V\in \pi} \binom{|V|}{2} \kappa_2(\mu) \right)^k e_\pi (t,\mu).
  \]
  Due to L'H\^{o}pital's theorem, Proposition \ref{thm:m=td} and Equation \eqref{eq:chV/2}, the following result is obtained:
  \begin{align*}
  \lim_{t\rightarrow 0^+} \kappa_n(t,\mu) &=\frac{(-1)^{n-1}}{(n-1)!} \sum_{\pi \in \calP(n)}  \left\{\lim_{t\rightarrow 0^+} \frac{\left( -\sum_{V\in \pi} \binom{|V|}{2} \kappa_2(\mu) \right)^{n-1} e_\pi (t,\mu)}{(n-1)!} \right\}\mu_n(\pi,1_n)\\
  &=\frac{\kappa_2(\mu)^{n-1}}{((n-1)!)^2} \sum_{\pi \in \calP(n)} \left( \sum_{V\in \pi} \binom{|V|}{2}\right)^{n-1} \mu_n(\pi,1_n)\\
  &=\frac{(\kappa_2(\mu) n)^{n-1}}{n!}.
  \end{align*}
\end{proof}

\begin{example} 
For simplicity, we assume that $\kappa_2^{(d)}(p_d)=1$. Then it also satisfies that $\kappa_2(\mu)=1$. Then the first four cumulants are computed as follows.
  \begin{itemize}
   \item $\kappa_1(t,\mu) = 1$ and $\lim_{t\rightarrow 0^+} \kappa_1(t,\mu)=1$.
   \item $\kappa_2(t,\mu) = \dfrac{e^{t}-1}{t}$ and $\lim_{t\rightarrow 0^+} \kappa_2(t,\mu)=1$.
    \item $\kappa_3(t,\mu) = \dfrac{e^{3t} - 3e^t +2}{2t^2}$ and $\lim_{t\rightarrow 0^+} \kappa_3(t,\mu)=\dfrac{3}{2}$.
    \item $\kappa_4(t,\mu) =\dfrac{e^{6t} - 4 e^{3 t}  - 3 e^{2 t} + 12 e^t    - 6}{6t^3}$ and $\lim_{t\rightarrow 0^+} \kappa_4(t,\mu)=\dfrac{8}{3}$.
  \end{itemize}
  \end{example}


\subsection{Case of $m/d \to 0$ as $d,m\rightarrow\infty$}

For the proof of Theorem \ref{thm:mainresult}, we prepare a few concepts.

\begin{definition}
  Let $\C[[x,y]]$ denote the ring of formal power series of $x$ and $y$:
  \[
    \C[[x,y]] := \left\{f(x,y) = \sum_{k,l=0}^\infty a_{kl}x^k y^l \;\middle|\; a_{kl} \in \C \right\}.
  \]
  For any $f(x,y) \in \C[[x,y]]$, there uniquely exist $\{q_l[f](x)\}_{l=0}^\infty \subset \C[[x]]$ such that
  \[
    f(x,y) = \sum_{l=0}^\infty q_l[f](x)y^l
  \]
  since $\C[[x,y]] \cong (\C[[x]])[[y]]$.
  We define a subset $\SUB$ of $\C[[x,y]]$ as 
  \[
    \SUB := \left\{f(x,y) =  \sum_{l=0}^\infty q_l[f](x)y^l \in \C[[x,y]] \;\middle|\; \deg q_l[f] \le l  \right\}.
  \]
\end{definition}

\begin{lemma}\label{lem:subring}
  The subset $\SUB$ is closed under addition and multiplication, i.e. $\SUB$ is a subring of $\C[[x,y]]$.
\end{lemma}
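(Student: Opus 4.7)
The plan is to verify the two closure conditions directly from the definition, as this is essentially a bookkeeping check on degrees of coefficients in a formal power series in $y$.

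First, I would handle closure under addition. Given $f,g \in \SUB$ with decompositions $f = \sum_l q_l[f](x) y^l$ and $g = \sum_l q_l[g](x) y^l$, the sum is $f + g = \sum_l (q_l[f] + q_l[g])(x) y^l$, so $q_l[f+g] = q_l[f] + q_l[g]$. Since $\deg(q_l[f] + q_l[g]) \le \max(\deg q_l[f], \deg q_l[g]) \le l$, we conclude $f+g \in \SUB$.

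Next, for multiplication, the coefficient of $y^l$ in $fg$ is
\[
q_l[fg](x) = \sum_{i+j=l} q_i[f](x) \, q_j[g](x).
\]
Each summand is a polynomial in $x$ of degree at most $\deg q_i[f] + \deg q_j[g] \le i + j = l$, so $\deg q_l[fg] \le l$, yielding $fg \in \SUB$. Finally, $1 = 1 \cdot y^0 \in \SUB$ since the constant polynomial $1$ has degree $0 \le 0$, and closure under negation is immediate from the additive case, so $\SUB$ is indeed a subring of $\C[[x,y]]$.

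There is no substantive obstacle here; the key observation is simply that the filtration by powers of $y$ is compatible with the degree filtration in $x$ via the bound $\deg q_l \le l$, and that this bound is preserved by the standard Cauchy-product formula because degrees add under multiplication while the indices also add to $l$.
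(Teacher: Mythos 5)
Your proof is correct and follows essentially the same route as the paper: both verify $q_l[f+g] = q_l[f] + q_l[g]$ and the Cauchy-product formula $q_l[fg] = \sum_{i+j=l} q_i[f]\, q_j[g]$, with the degree bound $\deg q_i[f] + \deg q_j[g] \le i + j = l$ doing the work for multiplication. Your write-up just makes the degree bookkeeping explicit where the paper leaves it implicit (the paper instead records the leading-coefficient identity $\lead(q_l[fg]) = \sum_{i+j=l} \lead(q_i[f])\lead(q_j[g])$, which it reuses later).
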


\begin{proof}
  Indeed, by definition,
  $$q_l[f + g] = q_l[f] + q_l[g]$$ and 
  $$q_l[fg] = \sum_{i+j=l} q_i[f] q_j[g]$$ 
  for $f,g \in \SUB$, $l \ge 0$.
  In particular, $$\lead(q_l[fg]) = \sum_{i+j=l} \lead(q_i[f]) \lead (q_j[g]).$$
\end{proof}

\begin{lemma} \label{lem:power}
  For a formal power series $f(x) = 1 + \sum_{k=1}^\infty \frac{a_k}{k!} x^k \in \C[[x]]$, consider
  \[
    F(m,x):= f^m(x) = \left(1+\sum_{k=1}^d \frac{a_k}{k!} x^k\right)^m = \sum_{l=0}^\infty q_l[F](m) x^l \in \C[[m,x]].
  \]
  Then $F \in \SUB$, $\deg (q_l[F]) = l$, and $\lead (q_l[F]) = a_1^l/l!$ for $l \ge 0$.
\end{lemma}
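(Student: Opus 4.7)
The plan is to expand $F(m,x) = (1+g(x))^m$ via the formal generalized binomial series in the variable $m$, where I set $g(x) := \sum_{k=1}^\infty \frac{a_k}{k!} x^k$. Since $g$ has zero constant term, $g(x)^n$ lies in $x^n\C[[x]]$, so any fixed coefficient of $x^l$ receives contributions from only finitely many $n$. This will let me rewrite the expansion as
\[
F(m,x) = \sum_{n=0}^\infty \binom{m}{n} g(x)^n = \sum_{l=0}^\infty \left(\sum_{n=0}^l c_{n,l}\binom{m}{n}\right) x^l,
\]
where $c_{n,l} := [x^l] g(x)^n$ denotes the coefficient of $x^l$ in $g(x)^n$. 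Hence $q_l[F](m) = \sum_{n=0}^l c_{n,l}\binom{m}{n}$, which is a polynomial in $m$ of degree at most $l$; this immediately gives $F \in \SUB$.

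To identify the leading coefficient, I would observe that only the $n = l$ summand can contribute to $m^l$, since $\binom{m}{n}$ is a polynomial of degree exactly $n$ in $m$. For $n = l$, the coefficient $c_{l,l} = [x^l] g(x)^l$ is obtained uniquely by selecting the degree-one term $a_1 x$ from each of the $l$ factors of $g$ (every other choice strictly increases the total $x$-degree), so $c_{l,l} = a_1^l$. Combined with $\binom{m}{l} = m^l/l! + O(m^{l-1})$, this gives $[m^l] q_l[F](m) = a_1^l/l!$, so $\lead(q_l[F]) = a_1^l/l!$ and, under the implicit assumption $a_1 \neq 0$, $\deg(q_l[F]) = l$. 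The base case $l=0$ reduces to $q_0[F](m) = 1$, which matches $a_1^0/0! = 1$.

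The argument is quite short; the only point requiring a moment's thought is justifying the rearrangement of the double sum, which is standard in $\C[m][[x]]$ once we observe that $g(x)^n \in x^n \C[[x]]$ ensures $x$-adic convergence. I do not anticipate a genuine obstacle here.
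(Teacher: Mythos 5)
Your proposal is correct and follows essentially the same route as the paper's proof: both expand $F(m,x)=(1+g(x))^m=\sum_{n\ge 0}\binom{m}{n}g(x)^n$ and extract the coefficient of $x^l$ (the paper via $\frac{1}{l!}\partial_x^l|_{x=0}$, you via $[x^l]$, which is the same computation), identifying the top term $c_{l,l}\binom{m}{l}$ with $c_{l,l}=g'(0)^l=a_1^l$. Your added remark that $\deg(q_l[F])=l$ implicitly requires $a_1\neq 0$ is a fair point of precision that the paper glosses over, but it does not change the argument.
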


\begin{proof}
  Clearly $q_0[F](m) = F(0,m) = f^m(0) = 1$ and 
  \[
    q_1[F](m) = \left.\frac{\partial}{\partial x}\right|_{x=0}F(m,x) = nf^{m-1}(0)f'(0) = m a_1.
  \]

  Define $g(x) = \sum_{k=1}^\infty \frac{a_k}{k!} x^k$ and then $f(x) = 1 + g(x)$.
  Thus,
  \begin{align*}
    F(m,x) &= (1 + g(x))^m\\
    &= \sum_{k=0}^\infty \binom{m}{k} g(x)^k
  \end{align*}
  and so
  \[
    q_l[F](m) = \frac{1}{l!}\left.\frac{\partial^l}{\partial x^l}\right|_{x=0}F(m,x) = \sum_{k=1}^l \binom{m}{k} \frac{1}{l!}\left.\frac{\partial^l}{\partial x^l}\right|_{x=0} g(x)^k.
  \]
  Hence, $\deg q_l[F] = l$ and 
  \[
    \lead q_l[F] = \frac{1}{(l!)^2}\left.\frac{\partial^l}{\partial x^l}\right|_{x=0} g(x)^l = \frac{g'(0)^l}{l!} = \frac{a_1^l}{l!}.
  \]
\end{proof}

\begin{theorem}[See Theorem \ref{main3}]\label{thm:mainresult}
Let us consider $p_d\in \mathbb{P}_{\mathrm{mon}}(d)$ satisfying (A-1) and (A-2). It satisfies that
$$
  \kappa_n^{(d)} \left(D_{1/m}((p_d^{\boxtimes_dm})^{\boxplus_dm})\right) \to \frac{(\kappa_2(\mu)n)^{n-1}}{n!}, \quad n \in \N
 $$
as $d, m\rightarrow\infty$ with $m/d \to 0$.
\end{theorem}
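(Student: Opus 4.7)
The plan is to rewrite the cumulants combinatorially as a sum over partitions, expand $\widetilde{a}_\pi(p_d)^m$ in powers of $m/d$, and then exploit the vanishing identity from Corollary~\ref{ex:Combthm} to kill every term of order below $(m/d)^{n-1}$ while isolating the surviving term.

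First, I would reduce the statement to an identity on the $m$-fold multiplicative convolution alone. Combining linearity of finite free cumulants under $\boxplus_d$ with the scaling \eqref{eq:kappa_D}, one gets
\begin{equation*}
  \kappa_n^{(d)}\!\left(D_{1/m}((p_d^{\boxtimes_d m})^{\boxplus_d m})\right) = \frac{1}{m^{n-1}}\,\kappa_n^{(d)}(p_d^{\boxtimes_d m}).
\end{equation*}
By Lemma~\ref{lem:finite_free_multi} and the defining formula \eqref{def:cumulant}, the right-hand side equals
\begin{equation*}
  \frac{(-d)^{n-1}}{m^{n-1}(n-1)!}\sum_{\pi\in\calP(n)}\widetilde{a}_\pi(p_d)^m \mu_n(\pi,1_n).
\end{equation*}

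Next, I would carry out the Taylor expansion of the summand. Under (A-1) and (A-2), repeating the expansion that led to \eqref{eq:apple} and taking products over blocks gives
\begin{equation*}
  \widetilde{a}_\pi(p_d)=\prod_{V\in\pi}\widetilde{a}_{|V|}(p_d)=1-\frac{\kappa_2(\mu)}{d}\,c_\pi + O(d^{-2}),\qquad c_\pi:=\sum_{V\in\pi}\binom{|V|}{2}.
\end{equation*}
Writing $\widetilde{a}_\pi(p_d)^m=\exp\!\bigl(m\log\widetilde{a}_\pi(p_d)\bigr)$ and using that $m\log\widetilde{a}_\pi(p_d)=-\kappa_2(\mu)c_\pi(m/d)+O(m/d^2)\to 0$ under $m/d\to 0$, I would expand the exponential as
\begin{equation*}
  \widetilde{a}_\pi(p_d)^m \;=\; \sum_{j=0}^{n-1}\frac{(-\kappa_2(\mu)\,c_\pi)^{j}}{j!}\left(\frac{m}{d}\right)^{j} + \mathcal{E}_\pi(d,m),
\end{equation*}
where the remainder $\mathcal{E}_\pi(d,m)=O\bigl((m/d)^{n}\bigr)+O(m/d^{2})\cdot O\bigl((m/d)^{n-2}\bigr)$, uniformly over $\pi\in\calP(n)$ (a finite set).

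The third step is the combinatorial cancellation. Substitute the expansion into the $\pi$-sum. The constant term vanishes by Lemma~\ref{prop:Mobius on posets}. For each $1\le j\le n-2$, Corollary~\ref{ex:Combthm} (applied to $f(x)=\binom{x}{2}$) gives
\begin{equation*}
  \sum_{\pi\in\calP(n)} c_\pi^{\,j}\,\mu_n(\pi,1_n)=0,
\end{equation*}
so all such terms drop out; for $j=n-1$ the same corollary yields $(n-1)!\,n^{n-2}$. Inserting this back and simplifying,
\begin{equation*}
  \frac{(-d)^{n-1}}{m^{n-1}(n-1)!}\cdot \frac{(-\kappa_2(\mu))^{n-1}}{(n-1)!}\left(\frac{m}{d}\right)^{n-1}(n-1)!\,n^{n-2} \;=\; \frac{(\kappa_2(\mu)\,n)^{n-1}}{n!},
\end{equation*}
which is the target value.

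The main obstacle will be the fourth step: showing that the prefactor $(d/m)^{n-1}$ multiplied by the remainder $\mathcal{E}_\pi(d,m)$ really tends to zero. Terms in $\mathcal{E}_\pi$ of type $(m/d)^{n+r}$ contribute $O((m/d)^{r+1})\to 0$; terms coming from the $O(d^{-2})$ correction in $\widetilde{a}_\pi(p_d)$ contribute an extra factor $1/d$, which is more than enough. One must be careful to use that $\kappa_2^{(d)}(p_d)\to \kappa_2(\mu)$ and that higher finite free cumulants $\kappa_j^{(d)}(p_d)$ stay bounded as $d\to\infty$ (which follows from (A-2) via Proposition~\ref{prop:converges_freecumulant}) in order to justify the uniform $O(d^{-2})$ bound and the uniform control of Taylor remainders. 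A compact way to package this is to apply Taylor's theorem with quantitative remainder directly to $x\mapsto e^{-\kappa_2(\mu)c_\pi x}$ at order $n-1$, evaluated at $x=m/d$, and absorb the lower-order corrections into a single $O(m/d^2)+O((m/d)^n)$ error controlled uniformly in $\pi$.
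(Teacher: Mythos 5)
Your reduction, the Taylor expansion of the leading exponential, and the use of Corollary \ref{ex:Combthm} to kill the terms $j\le n-2$ and isolate $j=n-1$ all match the paper's strategy, and your computation of the surviving term is correct. The genuine gap is in your fourth step, the remainder control. After multiplying by the prefactor $(d/m)^{n-1}$, an error of size $O(m/d^{2})$ in $\widetilde{a}_\pi(p_d)^m$ contributes $O\bigl(d^{\,n-3}/m^{\,n-2}\bigr)$, not something small: the hypothesis $m/d\to0$ puts no lower bound on the growth of $m$, so for $n\ge 4$ and, say, $m\sim\log d$ this diverges. Your claimed remainder form $O(m/d^{2})\cdot O\bigl((m/d)^{n-2}\bigr)$ --- the extra factor $(m/d)^{n-2}$ is exactly what would save the argument --- is asserted, not proved, and it is false termwise: the first-order contribution of the $O(d^{-2})$ corrections to $\exp\bigl(m\log\widetilde{a}_\pi(p_d)\bigr)$ really is of order $m/d^{2}$ for each fixed $\pi$. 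It becomes negligible only after summation against $\mu_n(\pi,1_n)$, i.e.\ the cross terms (powers of $c_\pi\, m/d$ multiplied by the $\pi$-dependent coefficients of the higher $d^{-l}$ corrections) must cancel combinatorially. Corollary \ref{ex:Combthm} alone cannot deliver this, because those cross terms involve products of \emph{different} block functionals $\sum_{V\in\pi}f_i(|V|)$; one would need the full multilinear Theorem \ref{thm:Combinatorics} together with degree bounds on the coefficients of the $1/d$-expansion of $\log\widetilde{a}_j(p_d)$ --- and these bounds hide further cancellations (for instance, the coefficient of $d^{-2}$ in $\log\widetilde{a}_j(p_d)$ has degree $3$ in $j$, not $4$, because the $j^4$-parts of the $d^{-2}$ coefficient of $\widetilde{a}_j(p_d)$ and of half the square of its $d^{-1}$ coefficient cancel). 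None of this is established in your proposal.

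The paper avoids this entirely by a structural fact you do not invoke: by the second expression in Proposition \ref{prop:cumulant_formula},
\begin{equation*}
  \kappa_n^{(d)}(p_d^{\boxtimes_d m})
  =\frac{(-d)^{n-1}}{(n-1)!}\sum_{\substack{\sigma_1,\dots,\sigma_m\in\calP(n)\\ \sigma_1\vee\cdots\vee\sigma_m=1_n}}
  \prod_{i=1}^m d^{|\sigma_i|-n}\mu_n(0_n,\sigma_i)\,\kappa_{\sigma_i}^{(d)}(p_d),
\end{equation*}
and the join condition forces $\sum_{i}(n-|\sigma_i|)\ge n-1$, so the entire $1/d$-expansion of $\sum_\pi\widetilde{a}_\pi(p_d)^m\mu_n(\pi,1_n)$ begins at order $d^{-(n-1)}$; in the paper's notation the coefficients $q_l(m)$ vanish identically for $l\le n-2$. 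Combined with $\deg q_l\le l$ (Lemmas \ref{lem:subring} and \ref{lem:power}), this makes the tail $l\ge n$ contribute $O(m/d)$ and the sub-leading part of $q_{n-1}$ contribute $O(1/m)$, with the leading coefficient of $q_{n-1}$ computed exactly as you did via \eqref{eq:chV/2}. To close your argument you must either import this join-condition vanishing or prove the cross-term cancellations directly with Theorem \ref{thm:Combinatorics}; your naive error bookkeeping does not suffice for $n\ge4$ (it does happen to work for $n\le 3$).
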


\begin{proof}
By Proposition \ref{prop:cumulant_formula},
\begin{align*}
  \kappa_n^{(d)}(p_d^{\boxtimes_dm})
   & =\frac{(-d)^{n-1}}{(n-1)!} \sum_{\pi \in \calP(n)} \widetilde{a}_\pi(p_d)^m \mu_n(\pi,1_n) \\
   & =\frac{(-d)^{n-1}}{(n-1)!}\sum_{\substack{ \sigma_1,\dots, \sigma_m \in \calP(n)         \\ \sigma_1\lor \cdots\lor \sigma_m=1_n}} \left( \prod_{i=1}^m d^{|\sigma_i|-n} \mu_n(0_n,\sigma_i) \kappa_{\sigma_i}^{(d)}(p_d)\right)\\
   & =:\frac{(-d)^{n-1}}{(n-1)!}\sum_{l=0}^\infty \frac{q_l(m)}{d^l},
\end{align*}
where, on the third line, we define the polynomials $\{q_l(m)\}_{l=0}^\infty$ as the coefficients of the expansion with respect to $d^{-1}$.
We immediately know that $q_l(m) = 0 \; (l=0, \dots, n-2)$ from the second line. One can verify that the degree of $q_l(m)$ is less than or equal to $l$.
Look at the first line and \eqref{eq:apple}, and then consider the coefficients of the expansion of $\widetilde{a}_n(p_d)^m \; (n\in \N)$ with respect to $d^{-1}$.
Then it follows that its degree is less than or equal to the degree of its denominator by Lemma \ref{lem:power}.
Also, it is true about $\widetilde{a}_\pi(p_d)^m$ for $\pi \in \calP(n)$ by Lemma \ref{lem:subring}.

Then we show the leading coefficient of $q_{n-1}(m)$ equals $(-\kappa_2(p_d))^{n-1}n^{n-2}$.
The leading coefficient of the numerator polynomial of $\widetilde{a}_\pi(p_d)^m$ for $d^{n-1}$
is computed as
\[
  \sum_{\substack{l_1, \dots, l_{|\pi|} \ge 0 \\ l_1 + \cdots +l_{|\pi|}=n-1}} \frac{1}{l_1! \cdots l_{|\pi|}!}  \left(-\kappa_2^{(d)}(p_d)\binom{|V_1|}{2}\right)^{l_1} \cdots \left(-\kappa_2^{(d)}(p_d)\binom{|V_{|\pi|}|}{2}\right)^{l_{|\pi|}}
\]
by Lemma \ref{lem:subring}.
Hence the leading coefficient of $q_{n-1}(m)$ equals to 
\begin{align*}
(-\kappa_2^{(d)}(p_d))^{n-1}&  \sum_{\substack{\pi \in \calP(n)\\ \pi = \{V_1, \dots, V_r\}}} \left\{\sum_{\substack{l_1, \dots, l_{r} \ge 0 \\ l_1 + \cdots +l_{r}=n-1}} \frac{1}{l_1! \cdots l_{r}!}  \binom{|V_1|}{2}^{l_1} \cdots \binom{|V_{r}|}{2}^{l_{r}}\right\} \mu_n(\pi,1_n)\\
&=\frac{(-\kappa_2^{(d)}(p_d))^{n-1}}{(n-1)!} \sum_{\pi \in \calP(n)} \left(\sum_{V \in \pi} \binom{|V|}{2} \right)^{n-1} \mu_n(\pi,1_n)\\
&=(-\kappa_2^{(d)}(p_d))^{n-1} n^{n-2},
\end{align*}
where the last equality holds due to  Equation \eqref{eq:chV/2}.
Finally, we obtain
\begin{align*}
  \kappa_n^{(d)} \left(D_{1/m}((p_d^{\boxtimes_dm})^{\boxplus_dm})\right) 
  &= \frac{(-d)^{n-1}}{m^{n-1}(n-1)!}\sum_{l=0}^\infty \frac{q_l(m)}{d^l}\\
  & = \frac{(\kappa_2^{(d)}(p_d) n)^{n-1}}{n!} + O\biggl(\frac{1}{m}\biggr) + O\biggl(\frac{m}{d} \biggr).
\end{align*}
\end{proof}

\section{Central limit theorem for finite free multiplicative convolution of polynomials with nonnegative roots}

In this section, we prove the law of large numbers (LLN) and the central limit theorem (CLT) for finite free multiplicative convolution of polynomials with nonnegative roots.  Moreover, we investigate a relation between the limit polynomial from the CLT and the multiplicative free semicircular distribution on $[0,\infty)$.

\subsection{Law of large numbers}

In \cite{FU}, the LLN of finite free multiplicative convolution of polynomials with nonnegative roots was established.
That is, if $p$ is a polynomial of degree $d$ with nonnegative roots, then the limit polynomial of $\phi_{1/m}(p^{\boxtimes_d m})$ as $m\rightarrow\infty$, was investigated, where $\phi_\alpha$ was defined in \eqref{eq:phi_a}.
Recall that its limit polynomial is nontrivial and expected to be closely related to results in \cite{HM}.
Here, we are interested in another type of the LLN for $\boxtimes_d$, that is, the limit behavior of $\phi_{1/m}(p)^{\boxtimes_d m}$ (is not equal to $\phi_{1/m}(p^{\boxtimes_dm})$) as $m\rightarrow\infty$.
However, it was not studied in \cite{FU}. In this section, we show that the limit roots of $\phi_{1/m}(p)^{\boxtimes_d m}$ concentrate on a point as $m\rightarrow\infty$. 


\begin{proposition}
Let $p(x)=\prod_{k=1}^d(x-e^{\theta_k})$. Assume that
\[
\frac{1}{d}\sum_{k=1}^d \theta_k =\alpha.
\]
Then
\[
\lim_{m\rightarrow\infty } \phi_{1/m}(p)^{\boxtimes_d m}(x)=(x-e^{\alpha})^d.
\]
\end{proposition}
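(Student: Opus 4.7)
The plan is to pass through the dual coordinates $\widetilde{a}_j(\cdot)$, where finite free multiplicative convolution acts by coordinatewise multiplication (Lemma \ref{lem:finite_free_multi}). Since $\phi_{1/m}(p)(x) = \prod_{k=1}^d (x - e^{\theta_k/m})$, one has
\[
  \widetilde{a}_j\bigl(\phi_{1/m}(p)^{\boxtimes_d m}\bigr) = \widetilde{a}_j\bigl(\phi_{1/m}(p)\bigr)^m = \binom{d}{j}^{-m} e_j\bigl(e^{\theta_1/m}, \dots, e^{\theta_d/m}\bigr)^m,
\]
where $e_j$ is the $j$-th elementary symmetric polynomial. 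Convergence of coefficients of a sequence of monic polynomials of fixed degree $d$ to those of $(x - e^\alpha)^d$ is equivalent to $\widetilde{a}_j\bigl(\phi_{1/m}(p)^{\boxtimes_d m}\bigr) \to e^{j\alpha}$ for $j = 0,1,\dots,d$ because $\widetilde{a}_j\bigl((x-e^\alpha)^d\bigr) = e^{j\alpha}$, so it is enough to verify this pointwise limit.

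Next, I would expand the symmetric function. For each subset $I \subset [d]$ with $|I| = j$,
\[
  \prod_{i \in I} e^{\theta_i/m} = \exp\!\Bigl(\tfrac{1}{m}\textstyle\sum_{i \in I} \theta_i\Bigr) = 1 + \tfrac{1}{m}\sum_{i \in I}\theta_i + O(m^{-2}),
\]
where the implicit constant depends on $d$ and the $\theta_k$ (both fixed), so summing over all $\binom{d}{j}$ subsets gives
\[
  e_j\bigl(e^{\theta_1/m}, \dots, e^{\theta_d/m}\bigr) = \binom{d}{j} + \tfrac{1}{m}\sum_{|I|=j}\sum_{i \in I}\theta_i + O(m^{-2}).
\]
Counting the number of $j$-subsets containing a fixed index yields $\sum_{|I|=j}\sum_{i\in I}\theta_i = \binom{d-1}{j-1}\sum_{i=1}^d \theta_i = j\binom{d}{j}\alpha$, using the hypothesis $\tfrac{1}{d}\sum_k \theta_k = \alpha$.

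Dividing by $\binom{d}{j}$ then yields $\widetilde{a}_j(\phi_{1/m}(p)) = 1 + \tfrac{j\alpha}{m} + O(m^{-2})$, and raising to the $m$-th power gives
\[
  \widetilde{a}_j\bigl(\phi_{1/m}(p)^{\boxtimes_d m}\bigr) = \Bigl(1 + \tfrac{j\alpha}{m} + O(m^{-2})\Bigr)^{m} \xrightarrow{m \to \infty} e^{j\alpha},
\]
which matches $\widetilde{a}_j\bigl((x-e^\alpha)^d\bigr)$ and completes the proof. There is no real obstacle here; the only tiny point of care is ensuring that the $O(m^{-2})$ error is uniform in $j$ (immediate, since $d$ and the $\theta_k$ are fixed) so that the $m$-th power passes to the exponential limit.
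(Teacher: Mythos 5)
Your proof is correct and follows essentially the same route as the paper's: both pass to the coefficients $\widetilde{a}_j$, use that $\boxtimes_d$ multiplies them coordinatewise, expand $\widetilde{a}_j(\phi_{1/m}(p)) = 1 + \frac{j\alpha}{m} + O(m^{-2})$ via the subset-counting identity $\sum_{|I|=j}\sum_{i\in I}\theta_i = \binom{d-1}{j-1}\sum_i\theta_i$, and take the $m$-th power to obtain $e^{j\alpha} = \widetilde{a}_j\bigl((x-e^\alpha)^d\bigr)$. Your remark on uniformity of the error in $j$ is a minor point the paper leaves implicit, but otherwise the arguments coincide.
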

\begin{proof}
A simple computation shows that
\begin{align*}
\tilde{a}_k(\phi_{1/m}(p))&=\binom{d}{k}^{-1} \sum_{1\le j_1<\cdots < j_k \le d} \exp\left(\frac{1}{m}(\theta_{j_1}+\cdots \theta_{j_k})\right)\\
&=\binom{d}{k}^{-1} \left\{\binom{d}{k}+\frac{1}{m}\sum_{1\le j_1<\cdots < j_k \le d}(\theta_{j_1}+\cdots \theta_{j_k})+O(m^{-2})\right\}\\
&=\binom{d}{k}^{-1}\left\{ \binom{d}{k}+ \frac{1}{m}\binom{d-1}{k-1}\sum_{i=1}^d\theta_i +O(m^{-2})\right\}\\
&=1+\frac{1}{m}\cdot \alpha k+O(m^{-2}).
\end{align*}
This implies that 
\[
\lim_{m\rightarrow\infty} \tilde{a}_k(\phi_{1/m}(p))^m=e^{\alpha k},
\]
and therefore
\[
\lim_{m\rightarrow\infty} \phi_{1/m}(p)^{\boxtimes_dm}(x)=\sum_{k=0}^d (-1)^k \binom{d}{k}e^{\alpha k} x^{d-k}=(x-e^{\alpha})^d.
\]
\end{proof}


\subsection{Central Limit Theorem and the multiplicative free semicircular distribution}
In this section, we investigate the CLT for $\boxtimes_d$ of polynomials with nonnegative roots and their limits as the degree goes to infinity.

\begin{theorem}[See Theorem \ref{main5} (1)]\label{thm:CLTpositive}
  Let $d \ge 2$.
  Suppose $p(x) = \prod_{k=1}^d(x-e^{\theta_k})$ such that 
  \begin{align*}
    \frac{1}{d}\sum_{k=1}^d \theta_k = 0 \qquad \text{and} \qquad    \frac{1}{d} \sum_{k=1}^d \theta_k^2 = \sigma^2.
  \end{align*}
  Then
  \begin{equation}
    \lim_{m \to \infty} \phi_{1/\sqrt{m}}(p)^{\boxtimes_d m}(x) = I_d\left(x;\frac{d \sigma^2}{d-1}\right),
  \end{equation}
  where $ I_d(x;t) = \sum_{k=0}^d (-1)^k \binom{d}{k} \exp(k(d-k)t/2d)x^{d-k}$ for $t \ge 0$.
\end{theorem}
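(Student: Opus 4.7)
\textbf{Proof proposal for Theorem \ref{thm:CLTpositive}.} The plan is to show convergence of each normalized coefficient $\widetilde{a}_k$ separately, since by Lemma \ref{lem:finite_free_multi} the convolution power satisfies $\widetilde{a}_k(\phi_{1/\sqrt{m}}(p)^{\boxtimes_d m}) = \widetilde{a}_k(\phi_{1/\sqrt{m}}(p))^m$, and the coefficients of $I_d(x;t)$ are already written in the $(-1)^k \binom{d}{k}\widetilde{a}_k$-form. So it suffices to prove
\[
  \widetilde{a}_k(\phi_{1/\sqrt{m}}(p))^m \longrightarrow \exp\!\left(\frac{k(d-k)}{2d}\cdot\frac{d\sigma^2}{d-1}\right)\qquad (m\to\infty)
\]
for every $k=0,1,\dots,d$.

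First, I would unfold the definition of $\widetilde{a}_k$ in terms of elementary symmetric functions. Writing $\phi_{1/\sqrt{m}}(p)(x) = \prod_{i=1}^d(x-e^{\theta_i/\sqrt{m}})$, we obtain
\[
  \widetilde{a}_k(\phi_{1/\sqrt{m}}(p)) = \binom{d}{k}^{-1}\sum_{\substack{S\subset [d]\\|S|=k}} \exp\!\left(\tfrac{1}{\sqrt{m}}\sum_{i\in S}\theta_i\right).
\]
Taylor-expanding the exponential up to order $1/m$ and summing over $S$, the $1/\sqrt{m}$ term vanishes because $\sum_i\theta_i = 0$, while the $1/m$ contribution involves
\[
  \sum_{|S|=k}\Bigl(\sum_{i\in S}\theta_i\Bigr)^2 = \binom{d-1}{k-1}\sum_i\theta_i^2 + 2\binom{d-2}{k-2}\sum_{i<j}\theta_i\theta_j.
\]
Using $\sum_i\theta_i^2 = d\sigma^2$ and $\sum_{i<j}\theta_i\theta_j = -d\sigma^2/2$ (from the two hypotheses), and simplifying via Pascal's rule $\binom{d-1}{k-1}-\binom{d-2}{k-2}=\binom{d-2}{k-1}$, this becomes $d\sigma^2\binom{d-2}{k-1}$. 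Dividing by $\binom{d}{k}$ gives the clean quotient $k(d-k)/(d(d-1))$, so
\[
  \widetilde{a}_k(\phi_{1/\sqrt{m}}(p)) = 1 + \frac{1}{m}\cdot\frac{k(d-k)\sigma^2}{2(d-1)} + O(m^{-3/2}).
\]

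Finally, raising to the $m$-th power and applying the standard limit $(1+a/m+O(m^{-3/2}))^m\to e^a$ yields the claim with $a=k(d-k)\sigma^2/(2(d-1)) = (k(d-k)/2d)\cdot (d\sigma^2/(d-1))$, matching the $k$-th coefficient of $I_d(x;d\sigma^2/(d-1))$.

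The only delicate points are bookkeeping: one must verify that the Taylor remainder in each elementary-symmetric sum is genuinely $O(m^{-3/2})$ uniformly in $k$ (which is automatic since $d$ is fixed and the $\theta_i$ are fixed), and that this remainder survives being raised to the $m$-th power as an $o(1)$ correction. The combinatorial reduction of $\sum_{|S|=k}(\sum_{i\in S}\theta_i)^2$ to $d\sigma^2\binom{d-2}{k-1}$ is the conceptual heart of the argument; after that, the identification of the limiting polynomial with $I_d(x; d\sigma^2/(d-1))$ is just a matter of substituting back into the coefficient-wise expansion.
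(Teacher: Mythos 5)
Your proposal is correct and follows essentially the same route as the paper's proof: coefficient-wise convergence via $\widetilde{a}_k(q^{\boxtimes_d m})=\widetilde{a}_k(q)^m$, Taylor expansion in $1/\sqrt{m}$ with the first-order term vanishing by the centering hypothesis, the same combinatorial reduction of $\sum_{|S|=k}\bigl(\sum_{i\in S}\theta_i\bigr)^2$ using $\sum_{i<j}\theta_i\theta_j=-d\sigma^2/2$, and the limit $(1+a/m+O(m^{-3/2}))^m\to e^a$. The only cosmetic difference is that you simplify $\binom{d-1}{k-1}-\binom{d-2}{k-2}$ via Pascal's rule before dividing by $\binom{d}{k}$, whereas the paper divides directly; the content is identical.
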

\begin{proof}
  First, note that
  \begin{equation} \label{tukue}
    0 = \left(\sum_{k=1}^d \theta_k\right)^2 = d \sigma^2 + 2 \sum_{1 \le j_1 < j_2 \le d} \theta_{j_1} \theta_{j_2}
  \end{equation}
  by the assumptions.
  It follows that
  \[
    \begin{split}
      \widetilde{a}_k(\phi_{1/\sqrt{m}}(p)) &= \binom{d}{k}^{-1} \sum_{1 \le j_1 < \cdots < j_k \le d} \exp\left(\frac{1}{\sqrt{m}}(\theta_{j_1}+ \cdots + \theta_{j_k})\right)\\
      &= \binom{d}{k}^{-1} \left\{ \binom{d}{k} + \frac{1}{\sqrt{m}}\sum_{1 \le j_1 < \cdots < j_k \le d} (\theta_{j_1}+ \cdots + \theta_{j_k}) \right.\\
      &\hspace{20mm} \left. + \frac{1}{2m} \sum_{1 \le j_1 < \cdots < j_k \le d} (\theta_{j_1}+ \cdots + \theta_{j_k})^2 + O(m^{-\frac{3}{2}}) \right\}\\
      &= \binom{d}{k}^{-1} \left\{ \binom{d}{k} + \frac{1}{\sqrt{m}}\binom{d-1}{k-1}\sum_{k=1}^d \theta_k  \right.\\
      &\hspace{20mm} \left. + \frac{1}{2m} \left(\binom{d-1}{k-1}\sum_{k=1}^d \theta_k^2 +2\binom{d-2}{k-2}\sum_{1 \le j_1 < j_2 \le d} \theta_{j_1} \theta_{j_2} \right) + O(m^{-\frac{3}{2}}) \right\}\\
      &= \binom{d}{k}^{-1} \left\{\binom{d}{k} + \frac{d\sigma^2}{2m} \left(\binom{d-1}{k-1} - \binom{d-2}{k-2} \right) + O(m^{-\frac{3}{2}})\right\}\\
      &= 1 + \frac{k(d-k)}{2(d-1)m}\sigma^2 + O(m^{-\frac{3}{2}}),
    \end{split}
  \]
  where we used the assumptions and \eqref{tukue} on the third line.
  Thus, $\widetilde{a}_k(\phi_{1/\sqrt{m}}(p))^m$ goes to $\exp(k(d-k)\sigma^2/2(d-1))$ as $m \to \infty$ for $k=0, \dots, d$.
  It means $\lim_{m \to \infty} \phi_{1/\sqrt{m}}(p)^{\boxtimes_d m}(x) = I_d(x;d{\sigma^2}/{(d-1)})$.
\end{proof}


Finally, we show that the finite free cumulants of $I_d(x;t)$ converge to the free cumulants of the multiplicative free semicircular distribution $\lambda_t$ whose $n$-th moment $m_n(\lambda_t)$ is given by
\[
\exp\left(\frac{nt}{2}\right)\sum_{k=0}^{n-1} \frac{n^{k-1}}{k!} \binom{n}{k+1} t^k, \qquad t\ge 0.
\]
The distribution $\lambda_t$ was introduced in \cite[Proposition 5]{Biane95}. 


\begin{theorem}[See Theorem \ref{main5} (2)]\label{thm:FMS}
For $t\ge 0$, we have $\mu\llbracket I_d(x;t)\rrbracket \xrightarrow{w} \lambda_t$ as $d\rightarrow\infty$.
\end{theorem}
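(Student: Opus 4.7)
The plan is to apply Proposition \ref{prop:converges_freecumulant}: the weak convergence $\mu\llbracket I_d(x;t)\rrbracket \xrightarrow{w} \lambda_t$ is equivalent to $\lim_{d\to\infty}\kappa_n^{(d)}(I_d(\cdot;t)) = \kappa_n(\lambda_t)$ for each $n\in\N$. By design, the coefficients of $I_d(x;t)$ satisfy $\widetilde{a}_k(I_d(\cdot;t)) = \exp(k(d-k)t/(2d))$, so for any $\pi \in \calP(n)$, using $\sum_{V\in\pi}|V| = n$,
\[
\widetilde{a}_\pi(I_d(\cdot;t)) = \exp\!\left(\frac{t}{2d}\sum_{V\in\pi}|V|(d-|V|)\right) = e^{nt/2}\exp\!\left(-\frac{t}{2d}\sum_{V\in\pi}|V|^2\right).
\]
Substituting into the defining formula \eqref{def:cumulant} of finite free cumulants and expanding the inner exponential as a power series in $t/(2d)$ gives
\[
\kappa_n^{(d)}(I_d(\cdot;t)) = \frac{(-d)^{n-1}e^{nt/2}}{(n-1)!}\sum_{k=0}^{\infty}\frac{(-t/(2d))^k}{k!}\sum_{\pi\in\calP(n)}\left(\sum_{V\in\pi}|V|^2\right)^{\!k}\mu_n(\pi,1_n).
\]

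The heart of the proof is then the combinatorial identity \eqref{eq:V^2} of Corollary \ref{ex:Combthm}, which says that the inner sum over $\calP(n)$ vanishes for $0 \le k \le n-2$ and equals $2^{n-1}(n-1)!\,n^{n-2}$ for $k = n-1$. Thus only the $k = n-1$ term survives in the limit, and a direct simplification of its contribution yields
\[
\frac{(-d)^{n-1}e^{nt/2}}{((n-1)!)^2}\left(-\frac{t}{2d}\right)^{\!n-1}\!2^{n-1}(n-1)!\,n^{n-2} = \frac{n^{n-2}t^{n-1}e^{nt/2}}{(n-1)!},
\]
which is independent of $d$. For the tail $k \ge n$, the crude estimates $|\mu_n(\pi,1_n)| \le (n-1)!$ and $\sum_{V\in\pi}|V|^2 \le n^2$ bound the inner sum by $B_n\,n^{2k}(n-1)!$ (with $B_n$ the $n$-th Bell number), so the $k$-th summand contributes $O(d^{n-1-k})$; the aggregate tail is therefore $O(d^{-1})$ and vanishes.

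Consequently $\lim_{d\to\infty}\kappa_n^{(d)}(I_d(\cdot;t)) = n^{n-2}t^{n-1}e^{nt/2}/(n-1)!$. The remaining step is to recognize this limit as $\kappa_n(\lambda_t)$; this identification is carried out in Appendix A by applying the Lagrange inversion theorem to Biane's explicit moment formula for $\lambda_t$. Proposition \ref{prop:converges_freecumulant} then concludes the proof. The main point requiring care is the tail estimate for $k \ge n$, where \eqref{eq:V^2} no longer forces vanishing and one must rely on the rapidly decaying factor $(t/(2d))^k/k!$; this decay overwhelms the polynomial growth $n^{2k}$ of the inner sum, so the tail bound is routine but essential.
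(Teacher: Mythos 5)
Your proof is correct and follows essentially the same route as the paper's: expand $\widetilde{a}_\pi(I_d(\cdot\,;t)) = e^{nt/2}\exp\bigl(-\tfrac{t}{2d}\sum_{V\in\pi}|V|^2\bigr)$, apply the identity \eqref{eq:V^2} so that only the $k=n-1$ term survives in the cumulant expansion, and conclude via Proposition \ref{prop:converges_freecumulant} together with the Appendix A identification of $\kappa_n(\lambda_t)$. The only difference is that you make the tail estimate for $k\ge n$ explicit (via the bounds $|\mu_n(\pi,1_n)|\le (n-1)!$ and $\sum_{V\in\pi}|V|^2\le n^2$), which the paper subsumes into its $O(1/d^n)$ remainder; this added care is sound but does not change the argument.
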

\begin{proof}
By the definition of $I_d(x ; t)$, we have
\begin{align*}
\widetilde{a}_k(I_d(x ; t))=\exp\left(\frac{tk(d-k)}{2d}\right), \qquad k=0,1, \dots, d,
\end{align*}
and therefore for $\pi \in \calP(n)$,
\begin{align*}
\widetilde{a}_\pi(I_d(x ; t))&=\prod_{V\in \pi} \exp\left(\frac{t|V|(d-|V|)}{2d}\right)\\
&=\exp\left\{ \frac{t}{2d} \left(\sum_{V\in \pi} d|V| -\sum_{V\in \pi}|V|^2\right) \right\}\\
&=\exp\left(\frac{tn}{2}\right) \exp\left(-\frac{t}{2d}\sum_{V\in \pi} |V|^2\right).
\end{align*}
Due to Equation \eqref{eq:V^2}, a straightforward computation shows that
\begin{align*}
\sum_{\pi\in \calP(n)} & \widetilde{a}_\pi(I_d(x ; t)) \mu_n(\pi,1_n)\\
&= \exp\left(\frac{tn}{2}\right) \sum_{\pi\in \calP(n)} \exp\left(-\frac{t}{2d}\sum_{V\in \pi} |V|^2\right)\mu_n (\pi,1_n) \\
&=\exp\left(\frac{tn}{2}\right) \sum_{\pi\in \calP(n)} \left\{ \sum_{k=0}^\infty \frac{1}{k!}\left(-\frac{t}{2d}\sum_{V\in \pi} |V|^2\right)^k \right\}\mu_n (\pi,1_n)\\
&=\exp\left(\frac{tn}{2}\right)\frac{1}{(n-1)!} \left(-\frac{t}{2d}\right)^{n-1} \left\{\sum_{\pi\in \calP(n)}\left(\sum_{V\in \pi}|V|^2\right)^{n-1} \mu_n (\pi,1_n)\right\}+O\left(\frac{1}{d^n}\right)\\
&=\exp\left(\frac{tn}{2}\right) \frac{1}{(n-1)!}\left(-\frac{t}{2d}\right)^{n-1} \cdot 2^{n-1}(n-1)!n^{n-2} +O\left(\frac{1}{d^n}\right)\\
&=\exp\left(\frac{tn}{2}\right) \left(-\frac{t}{d}\right)^{n-1} n^{n-2} +O\left(\frac{1}{d^n}\right).
\end{align*}
Therefore we have
\begin{align*}
\kappa_n^{(d)}(I_d(x ; t)) &= \frac{(-d)^{n-1}}{(n-1)!} \sum_{\pi\in \calP(n)}\widetilde{a}_\pi(I_d(x ; t)) \mu_n(\pi,1_n)\\
&=\exp\left(\frac{nt}{2}\right) \frac{(nt)^{n-1}}{n!} + O \left(\frac{1}{d}\right),
\end{align*}
which implies 
\begin{align*}
\lim_{d\rightarrow\infty}\kappa_n^{(d)}(I_d(x ; t)) =\exp\left(\frac{nt}{2}\right) \frac{(nt)^{n-1}}{n!}.
\end{align*}
Combining with Proposition \ref{prop:Cumulant_sigma_t} (1) and Proposition \ref{prop:converges_freecumulant}, the desired result is obtained.
\end{proof}


\begin{remark}\label{MFS}
Notice that $\lambda_t$ coincides with $D_{e^{t/2}}(\eta_t)$ where $\eta_t$ is the probability measure appeared from Sakuma and Yoshida \cite{SY}, since
\[
\kappa_n(\lambda_t)=\exp\left(\frac{nt}{2}\right) \frac{(nt)^{n-1}}{n!}=\kappa_n(D_{e^{t/2}}(\eta_t))
\]
for all $n\in \N$. 
\end{remark}

\section{Alternative proof of Kabluchko's limit theorems}

In this section, we give alternative proofs of Kabluchko's two limit theorems. As a noteworthy observation, the proofs in Section 6.1 run parallel to those presented in Section 5.

\subsection{Kabluchko's limit theorem for unitary Hermite polynomial}

Let us define $H_d(z;t)$ as a polynomial on $\C$ by setting
\[
H_d(z;t):=\sum_{k=0}^d (-1)^{k} \binom{d}{k} \exp\left(-\frac{tk(d-k)}{2d}\right) z^{d-k}, \qquad z\in \mathbb{C},\ t \ge 0.
\]
The polynomial $H_d(z;t)$ is called the {\it unitary Hermite polynomial} with parameter $t \ge 0$. By \cite[Lemma 2.1]{K22}, all zeroes of the polynomial $H_d(z;t)$ are located on the unit circle $\mathbb{T}$.
It is known as the limit polynomial of the CLT for finite free multiplicative convolution of polynomials with roots located on $\mathbb{T}$ by Mirabelli \cite[Theorem 3.16]{M}.

{
For a polynomial $p \in \monP(d)$ having only roots located on $\mathbb{T}$, there are unique $\theta_1,\dots ,\theta_d \in [-\pi,\pi)$ such that $p(z)=\prod_{k=1}^d (z-e^{i\theta_k})$.
We then define a polynomial $\phi_c(p)$ as
\[
\phi_c(p)(z):=\prod_{k=1}^d (z-e^{ic\theta_k})
\]
for $c \in (0,1)$.
}


\begin{proposition}\label{prop:CLT1}
  Let $d \ge 2$.
  Suppose $p(z) = \prod_{k=1}^d(z-e^{i\theta_k})$  {for some $\theta_1,\dots, \theta_d \in [-\pi, \pi)$. Assume that}
  \begin{align*}
    \frac{1}{d} \sum_{k=1}^d \theta_k =0 \quad \text{and} \quad \frac{1}{d} \sum_{k=1}^d \theta_k^2 = \sigma^2.
  \end{align*}
  Then
  \begin{equation*}
    \lim_{m \to \infty} \phi_{1/\sqrt{m}}(p)^{\boxtimes_d m}(z) = H_d\left(z; \frac{d\sigma^2}{d-1}\right).
  \end{equation*}
\end{proposition}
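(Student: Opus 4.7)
The plan is to mirror the proof strategy of Theorem \ref{thm:CLTpositive} essentially verbatim, with the real exponentials $e^{\theta/\sqrt{m}}$ replaced by complex unimodular factors $e^{i\theta/\sqrt{m}}$. The only material change will be a sign in the quadratic Taylor term, which is precisely what flips the $+k(d-k)t/2d$ in $I_d(z;t)$ into the $-k(d-k)t/2d$ in $H_d(z;t)$.

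First I would write
\[
\widetilde{a}_k(\phi_{1/\sqrt{m}}(p)) = \binom{d}{k}^{-1}\sum_{1\le j_1<\cdots<j_k\le d} \exp\!\left(\tfrac{i}{\sqrt{m}}(\theta_{j_1}+\cdots+\theta_{j_k})\right)
\]
and Taylor expand each summand through order $m^{-1}$, yielding a linear term in $i/\sqrt{m}$ and a quadratic term with coefficient $-1/(2m)$. Then I would apply the two combinatorial identities used in the proof of Theorem \ref{thm:CLTpositive}:
\[
\sum_{1\le j_1<\cdots<j_k\le d}(\theta_{j_1}+\cdots+\theta_{j_k}) = \binom{d-1}{k-1}\sum_{i=1}^d \theta_i
\]
and
\[
\sum_{1\le j_1<\cdots<j_k\le d}(\theta_{j_1}+\cdots+\theta_{j_k})^2 = \binom{d-1}{k-1}\sum_{i=1}^d\theta_i^2 + 2\binom{d-2}{k-2}\sum_{1\le j_1<j_2\le d}\theta_{j_1}\theta_{j_2}.
\]
Using the hypotheses $\tfrac{1}{d}\sum\theta_k=0$ and $\tfrac{1}{d}\sum\theta_k^2=\sigma^2$, together with the identity $0=(\sum_k\theta_k)^2 = d\sigma^2 + 2\sum_{j_1<j_2}\theta_{j_1}\theta_{j_2}$ which holds exactly as in the proof of Theorem \ref{thm:CLTpositive}, the linear term vanishes and the quadratic term simplifies to
\[
\widetilde{a}_k(\phi_{1/\sqrt{m}}(p)) = 1 - \frac{k(d-k)}{2(d-1)m}\sigma^2 + O(m^{-3/2}).
\]

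Raising to the $m$-th power via Lemma \ref{lem:finite_free_multi}, namely $\widetilde{a}_k(\phi_{1/\sqrt{m}}(p)^{\boxtimes_d m}) = \widetilde{a}_k(\phi_{1/\sqrt{m}}(p))^m$, and letting $m\to\infty$, each coefficient converges to $\exp\!\bigl(-k(d-k)\sigma^2/(2(d-1))\bigr)$, which is exactly the $k$-th coefficient of $H_d(z;d\sigma^2/(d-1))$ (in the $\widetilde{a}$-normalization). Since the polynomials have fixed degree $d$, coefficientwise convergence is equivalent to uniform convergence on compact sets, giving the claimed limit.

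I do not anticipate a serious obstacle: the only subtlety compared to Theorem \ref{thm:CLTpositive} is bookkeeping the factor $i^2=-1$ coming from the second-order Taylor term of $e^{i\theta/\sqrt{m}}$, which is exactly what produces the opposite sign in the exponent of $H_d(z;t)$ versus $I_d(z;t)$. The error term $O(m^{-3/2})$ is uniform in $k\in\{0,1,\dots,d\}$ because $d$ is fixed and the $\theta_k$ are real, so raising to the $m$-th power is harmless.
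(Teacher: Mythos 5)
Your proposal is correct and follows essentially the same route as the paper: the paper's proof is literally a sketch stating that a parallel argument to Theorem \ref{thm:CLTpositive} yields $\widetilde{a}_k(\phi_{1/\sqrt{m}}(p)) = 1 - \frac{k(d-k)}{2(d-1)m}\sigma^2 + O(m^{-3/2})$, with the sign flip from $i^2=-1$ producing $H_d$ in place of $I_d$, exactly as you observe. Your write-up merely fills in the details (the two symmetric-sum identities, the use of \eqref{tukue}, and the coefficientwise passage to the limit via Lemma \ref{lem:finite_free_multi}) that the paper leaves implicit.
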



\begin{proof}
 {For the reader's convenience, we include a sketch of the proof. A parallel argument to the proof of Theorem \ref{thm:CLTpositive} demonstrates that}
  \[
    \begin{split}
      \widetilde{a}_k(\phi_{1/\sqrt{m}}(p)) &= 1 - \frac{k(d-k)}{2(d-1)m}\sigma^2 + O(m^{-\frac{3}{2}}).
    \end{split}
  \]
  Therefore, we obtain $\lim_{m \to \infty} \phi_{1/\sqrt{m}}(p)^{\boxtimes_d m}(z) = H_d(z;{d\sigma^2}/{(d-1)})$.
\end{proof}

Kabluchko \cite[Theorem 2.2]{K22} proved that the empirical root distribution of $H_d(z ;t)$ converges weakly on the unit circle to the free normal distribution $\sigma_t$ on $\mathbb{T}$ with parameter $t \ge 0$ as $d\rightarrow\infty$, where $\sigma_t$ was introduced in \cite{BV92} and studied in \cite{Biane95, Biane97, Z14, Z15}.
 {Moreover, Cébron constructed a matricial model where the moments of (unitary) matrix-valued Brownian motion at time $t$ converge to the moments of $\sigma_t$ as the matrix size tends to infinity in \cite{Cebron}.}
The free cumulants of $\sigma_t$ are known (see \cite{DG-PN}), and we give a rigorous proof of them using the Lagrange inversion theorem in Appendix A.

\begin{theorem}[See Theorem \ref{main4} (1)]\label{thm:Kabluchko}
Consider $t \ge 0$. As $d\rightarrow\infty$, we have $\mu\llbracket H_d(z;t)\rrbracket \xrightarrow{w} \sigma_t$.
\end{theorem}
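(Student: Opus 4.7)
The plan is to mirror the strategy used for Theorem \ref{thm:FMS}, adapted to the unit-circle setting. By the analogue of Proposition \ref{prop:converges_freecumulant} for measures on $\mathbb{T}$ (as implicitly used in \cite{K21,K22}), it suffices to show that $\lim_{d\to\infty}\kappa_n^{(d)}(H_d(z;t)) = \kappa_n(\sigma_t)$ for every $n\in\N$. From the very definition of the unitary Hermite polynomial one reads off
\[
\widetilde{a}_k(H_d(z;t)) = \exp\!\left(-\frac{tk(d-k)}{2d}\right), \qquad k=0,1,\dots,d,
\]
so for any $\pi\in\calP(n)$,
\[
\widetilde{a}_\pi(H_d(z;t)) = \exp\!\left(-\frac{tn}{2}\right)\exp\!\left(\frac{t}{2d}\sum_{V\in\pi}|V|^2\right).
\]
This is exactly the formula used in the proof of Theorem \ref{thm:FMS}, with $t$ replaced by $-t$.

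Next, I would expand the second exponential as a Taylor series in $1/d$ and plug into the moment-cumulant formula \eqref{def:cumulant}. The combinatorial identity \eqref{eq:V^2} in Corollary \ref{ex:Combthm} forces every power $d^{-k}$ with $k<n-1$ to contribute zero, while the $d^{-(n-1)}$ coefficient equals $2^{n-1}(n-1)!\,n^{n-2}$. After multiplying by the prefactor $(-d)^{n-1}/(n-1)!$ and taking $d\to\infty$, the estimate gives
\[
\lim_{d\to\infty}\kappa_n^{(d)}(H_d(z;t)) = (-1)^{n-1}\exp\!\left(-\frac{nt}{2}\right)\frac{(nt)^{n-1}}{n!}.
\]

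Finally, I would compare this limit with the formula for $\kappa_n(\sigma_t)$ derived in Appendix A via the Lagrange inversion theorem (Proposition \ref{prop:Cumulant_sigma_t}) and conclude the weak convergence. The main conceptual obstacle is not the calculation itself, which is essentially a sign change from the proof of Theorem \ref{thm:FMS}, but rather the justification that a cumulant-to-moment criterion analogous to Proposition \ref{prop:converges_freecumulant} is valid for measures supported on $\mathbb{T}$; once this is in place the argument proceeds verbatim, with the combinatorial heart of the proof supplied entirely by Theorem \ref{thm:Combinatorics} in the specialized form \eqref{eq:V^2}.
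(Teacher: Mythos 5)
Your proposal is correct and follows essentially the same route as the paper: the paper's proof of Theorem \ref{thm:Kabluchko} is exactly the argument you spell out, namely the observation that $\widetilde{a}_\pi(H_d(z;t)) = \exp\left(-\frac{tn}{2}\right)\exp\left(\frac{t}{2d}\sum_{V\in\pi}|V|^2\right)$ turns the computation into the proof of Theorem \ref{thm:FMS} with $t$ replaced by $-t$, followed by the expansion in $1/d$, the identity \eqref{eq:V^2}, and the comparison of the resulting limit $\exp\left(-\frac{nt}{2}\right)\frac{(-nt)^{n-1}}{n!}$ with Proposition \ref{prop:Cumulant_sigma_t} (2) via Proposition \ref{prop:converges_freecumulant}. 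The one step you flag as needing justification, the validity of the cumulant-to-moment convergence criterion for measures supported on $\mathbb{T}$, is glossed over by the paper as well (it cites Proposition \ref{prop:converges_freecumulant} directly), so your caution there goes slightly beyond, rather than falls short of, the published argument.
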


\begin{proof}
A similar argument to the proof of Theorem \ref{thm:FMS} shows
\[
\kappa_n^{(d)}(H_d(z;t))=\exp\left(-\frac{nt}{2}\right) \frac{(-nt)^{n-1}}{n!} + O\left(\frac{1}{d}\right).
\]
By Propositions \ref{prop:Cumulant_sigma_t} (2) and  Proposition \ref{prop:converges_freecumulant}, we obtain the desired result.
\end{proof}

\subsection{Kabluchko's limit theorem for unitary Laguerre polynomial}

Let $L_{d,m}$ be the \textit{unitary Laguerre polynomial}, that is,
\begin{equation*}
  L_{d,m}(z)=\sum_{k=0}^d (-1)^k \binom{d}{k} \left(1-\frac{2k}{d}\right)^m z^{d-k}.
\end{equation*}%
In \cite[Theorem 2.7]{K21}, if $m/d \rightarrow t>0$ for some $t>0$ as $d,m \rightarrow\infty$, then the empirical root distribution of $L_{d,m}(z)$ converges weakly on $\mathbb{T}$ to the free unitary Poisson distribution $\Pi_t$ with parameter $t$, see \cite{BV92, K21} for details of $\Pi_t$.
We give an alternative proof of Kabluchko's limit theorem for free unitary Poisson distribution as follows.

\begin{theorem}[See Theorem \ref{main4} (2)]\label{thm:Poi}
As $d\rightarrow\infty $ with $m/d \rightarrow t$ for some $t>0$, we have
\begin{align*}
\mu\llbracket L_{d,m}\rrbracket \xrightarrow{w} \Pi_t, 
\end{align*}
equivalently
\begin{align*}
\kappa_n^{(d)}(L_{d,m})\rightarrow (-1)^{n-1}2^n e^{-2nt} \sum_{k=1}^{n-1} \frac{(-t)^k}{k!} (2n)^{k-1} \binom{n-2}{k-1} =\kappa_n(\Pi_t),
\end{align*}
where we understand $\kappa_1^{(d)}(L_{d,m})=e^{-2t}$.
\end{theorem}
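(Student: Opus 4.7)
The plan is to verify the finite free cumulant convergence $\kappa_n^{(d)}(L_{d,m})\to\kappa_n(\Pi_t)$ and then invoke Proposition \ref{prop:converges_freecumulant}. The strategy parallels Theorems \ref{thm:mainresult} and \ref{thm:FMS}: I would produce an asymptotic expansion of $\widetilde{a}_\pi(L_{d,m})$ in powers of $1/d$, and then identify the surviving terms by means of Theorem \ref{main1}.

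First, from the explicit form of $L_{d,m}$ one reads off $\widetilde{a}_k(L_{d,m}) = (1-2k/d)^m$, so for any $\pi\in\calP(n)$,
\[
\widetilde{a}_\pi(L_{d,m}) = \prod_{V\in\pi}\left(1 - \frac{2|V|}{d}\right)^m = e^{-2\alpha n}\exp\!\left(-\sum_{j\ge 2}\frac{2^j \alpha}{j\,d^{j-1}}\,S_j(\pi)\right),
\]
with $\alpha := m/d$ and $S_j(\pi):=\sum_{V\in\pi}|V|^j$; this follows from $\log(1-x) = -\sum_{j\ge 1}x^j/j$ and $\sum_{V\in\pi}|V|=n$. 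The structural fact is that the prefactor $e^{-2\alpha n}$ is independent of $\pi$, so when summed against $\mu_n(\pi,1_n)$ it produces $0$ for $n\ge 2$; the nontrivial contribution must come from the inner exponential.

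Expanding that exponential formally yields monomials $\prod_{j\ge 2}S_j(\pi)^{m_j}$, each carrying a prefactor $d^{-\sum_j(j-1)m_j}$. After substituting into $\kappa_n^{(d)}(L_{d,m}) = \frac{(-d)^{n-1}}{(n-1)!}\sum_\pi \widetilde{a}_\pi(L_{d,m})\mu_n(\pi,1_n)$ and interchanging sums, Theorem \ref{main1} applied to the (repeated) polynomial factors $f(x)=x^j$ asserts that $\sum_{\pi\in\calP(n)}\prod_{j\ge 2}S_j(\pi)^{m_j}\mu_n(\pi,1_n)$ vanishes unless $n\le\sum_j(j-1)m_j+1$, and equals $(n-1)!\,n^{K-1}\prod_j j^{m_j}$ in the critical case $\sum_j(j-1)m_j = n-1$, where $K:=\sum_j m_j$. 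Combined with the surviving power $d^{n-1-\sum(j-1)m_j}$, only the critical monomials contribute to the limit.

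Collecting the exponential-series coefficient $\prod_j(-2^j\alpha/j)^{m_j}/m_j!$, using $\sum_j jm_j = n-1+K$, and letting $\alpha\to t$, I would obtain
\[
\lim\kappa_n^{(d)}(L_{d,m}) = (-1)^{n-1}e^{-2tn}\sum_{K=1}^{n-1}n^{K-1}2^{n-1+K}(-t)^K\sum_{\substack{(m_j)_{j\ge 2}\\ \sum m_j=K,\;\sum(j-1)m_j=n-1}}\prod_{j\ge 2}\frac{1}{m_j!}.
\]
The inner sum equals $\binom{n-2}{K-1}/K!$ because $\sum \tfrac{K!}{\prod m_j!}$ counts compositions of $n-1$ into $K$ positive parts; after applying $n^{K-1}2^{K-1}=(2n)^{K-1}$ and $2^{n-1+K}=2^n\cdot 2^{K-1}$, the stated closed form for $\kappa_n(\Pi_t)$ drops out. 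The $n=1$ case follows directly from $\kappa_1^{(d)}(L_{d,m})=(1-2/d)^m\to e^{-2t}$. The main obstacle is the bookkeeping justifying the truncation of the formal $1/d$-expansion: one has to confirm that every monomial with $\sum(j-1)m_j>n-1$ contributes $0$ to the partition sum (again by the vanishing statement in Theorem \ref{main1}), so that only finitely many terms of the series are actually relevant and the remainder is genuinely $O(1/d)$.
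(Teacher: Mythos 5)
Your proposal is correct and follows essentially the same route as the paper: same expansion of $\log(1-2|V|/d)$, same identification of the critical terms via Theorem \ref{main1}, the same composition count $\binom{n-2}{K-1}$, and the same final appeal to Propositions \ref{prop:Poisson} and \ref{prop:converges_freecumulant}. The only organizational difference is bookkeeping: the paper expands the exponential by powers $k$ of the whole inner series and indexes contributions by ordered tuples $(l_1,\dots,l_k)$, evaluated through $s_k^{(n)}[f_{l_1+1},\dots,f_{l_k+1}](0)$, while you index by monomial multisets $(m_j)_{j\ge 2}$; the factor $K!/\prod_j m_j!$ converts between the two, so the computations are identical.

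One correction to your closing remark: monomials with $\sum_j (j-1)m_j > n-1$ are \emph{not} annihilated by Theorem \ref{main1} --- its vanishing clause applies only when $n$ exceeds $\sum_i m_i - (k-1)$, i.e.\ in the sub-critical regime, and the paper explicitly notes (remark after Corollary \ref{cor:counting}) that no formula is available for $n < M-(k-1)$, where the partition sums are generally nonzero. As you correctly stated earlier in the proposal, these super-critical terms die because of the factor $d^{\,n-1-\sum_j (j-1)m_j}$; what remains to check is summability of the tail, since infinitely many such monomials contribute. The paper handles this by truncating the outer exponential series at $k \le n-1$ with an $O(d^{-n})$ remainder and then bounding the sums over $l_1,\dots,l_k$ with $\sum_i l_i > n-1$ by a convergent geometric series in $1/d$ (valid once $d > 2n$), yielding a genuine $O(1/d)$ error; your argument needs the same uniform estimate, not a vanishing statement.
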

\begin{proof}
Consider $m/d \rightarrow t$ for some $t>0$ as $d,m\rightarrow\infty$. By the definition of the finite free cumulants,
\begin{align*}
\kappa_n^{(d)} (L_{d,m})=\frac{(-d)^{n-1}}{(n-1)!} \sum_{\pi \in \calP(n)} \prod_{V\in \pi} \left(1-\frac{2|V|}{d}\right)^m \mu_n(\pi,1_n).
\end{align*}
We have
\begin{align*}
\prod_{V\in \pi}\left(1-\frac{2|V|}{d}\right)^m&= \exp\left( m \sum_{V\in \pi} \log \left(1-\frac{2|V|}{d}\right)\right)\\
&= \exp\left(-\frac{m}{d} \cdot d \sum_{V\in \pi} \sum_{l =1}^\infty \frac{1}{l} \frac{(2|V|)^l}{d^l}\right)\\
&=e^{-2nt}\sum_{k=0}^\infty \frac{1}{k!} \left(-\frac{m}{d}\sum_{V\in \pi} \sum_{l=2}^\infty \frac{1}{l} \frac{(2|V|)^l}{d^{l-1}}\right)^k\\
&=e^{-2nt}\sum_{k=0}^{n-1} \frac{1}{k!} \left(-\frac{m}{d}\sum_{V\in \pi} \sum_{l=2}^\infty \frac{1}{l} \frac{(2|V|)^l}{d^{l-1}}\right)^k + O(d^{-n}).
\end{align*}
Let $t_d :=m/d$ then $t_d \to t$ as $d \to \infty$ by the assumption.
Then
\begin{align*}
\kappa_n^{(d)} (L_{d,m}) &= \frac{(-d)^{n-1}}{(n-1)!}e^{-2nt} \sum_{\pi \in \calP(n)} \sum_{k=1}^{n-1} \frac{(-t_d)^k}{k!} \left(\sum_{V\in \pi} \sum_{l=2}^\infty \frac{1}{l} \frac{(2|V|)^l}{d^{l-1}}\right)^k \mu_n(\pi,1_n) + O(d^{-1})\\
&=\frac{(-d)^{n-1}}{(n-1)!}e^{-2nt} \underbrace{\sum_{\pi \in \calP(n)} \sum_{k=1}^{n-1} \frac{(-t_d)^k}{k!} \left(\sum_{V\in \pi} \sum_{l=1}^\infty \frac{1}{l+1} \frac{(2|V|)^{l+1}}{d^{l}}\right)^k \mu_n(\pi,1_n)}_{=: M_n(d)} + O(d^{-1}).
\end{align*}
For short, we write $f_{l} (x):= {(2x)^l}/{l}$, and then
\begin{align*}
M_n(d)&=\sum_{\pi \in \calP(n)}\sum_{k=1}^{n-1} \frac{(-t_d)^k}{k!} \left(\sum_{V\in \pi} \sum_{l=1}^\infty  \frac{f_{l+1}(|V|)}{d^{l}}\right)^k \mu_n(\pi,1_n)\\
&= \sum_{\pi \in \calP(n)} \sum_{k=1}^{n-1} \frac{(-t_d)^k}{k!}  \prod_{j=1}^k \left(\sum_{V\in \pi} \sum_{l_j=1}^\infty  \frac{f_{l_j+1}(|V|)}{d^{l_j}}\right) \mu_n(\pi,1_n) \\
&=\sum_{k=1}^{n-1} \frac{(-t_d)^k}{k!} \sum_{l_1,\dots, l_k=1}^\infty \frac{1}{d^{l_1+\cdots+ l_k}} \sum_{\pi \in \calP(n)} \left(\sum_{V\in \pi} f_{l_1+1}(|V|)\right)\cdots \left(\sum_{V\in \pi} f_{l_k+1}(|V|)\right)\mu_n(\pi,1_n).
\end{align*}
Recall that, for each $n\in \N$ and $l_1,\cdots, l_k \in \N$, 
$$
s_k^{(n)}[f_{l_1+1},\dots, f_{l_k+1}](0)=\sum_{\pi \in \calP(n)} \left(\sum_{V\in \pi} f_{l_1+1}(|V|)\right)\cdots \left(\sum_{V\in \pi} f_{l_k+1}(|V|)\right)\mu_n(\pi,1_n).
$$ 

By Theorem \ref{thm:Combinatorics}, if $\sum_{i=1}^k \deg (f_{l_{i}+1}) -k < n-1$, that is, $\sum_{i=1}^k l_i<n-1$ then 
$$
s_k^{(n)}[f_{l_1+1},\dots, f_{l_k+1}](0)=0,
$$
and also if $\sum_{i=1}^k l_i=n-1$ then
\begin{align*}
  s_k^{(n)}[f_{l_1+1},\dots, f_{l_k+1}](0)&=(n-1)! n^{k-1}\left(\prod_{i=1}^k (l_i+1) \cdot \frac{2^{l_i+1}}{l_i+1} \right)\\
  &=(n-1)! 2^n(2n)^{k-1}.
\end{align*}
Thus, we obtain
\begin{align*}
\kappa_n^{(d)}(L_{d,m})&=\frac{(-1)^{n-1}}{(n-1)!}e^{-2nt} \sum_{k=1}^{n-1} \frac{(-t_d)^k}{k!} \sum_{\substack{l_1,\dots, l_k=1\\ l_1+\cdots +l_k=n-1}}^\infty s_k^{(n)}[f_{l_1+1},\dots, f_{l_k+1}](0)+O(d^{-1})\\
&=(-1)^{n-1} 2^n e^{-2nt} \sum_{k=1}^{n-1} \frac{(-t_d)^k}{k!}(2n)^{k-1} \# \left\{(l_1,\dots, l_k)\in \N^k \;\middle|\; \sum_{i=1}^k l_i =n-1\right\} +O(d^{-1})\\
&=(-1)^{n-1} 2^n e^{-2nt} \sum_{k=1}^{n-1} \frac{(-t_d)^k}{k!}(2n)^{k-1}\binom{n-2}{k-1} + O(d^{-1}).
\end{align*}
Combining with Proposition \ref{prop:Poisson}, we have obtained the result by taking $d \to \infty$.
\end{proof}

\section*{Acknowledgement}

This work was supported by JSPS Open Partnership Joint Research Projects grant no. JPJSBP120209921. Moreover, Y.U. is supported by JSPS Grant-in-Aid for Scientific Research (B) 19H01791 and JSPS Grant-in-Aid for Young Scientists 22K13925.
K.F. is supported by the Hokkaido University Ambitious Doctoral Fellowship (Information Science and AI).
O.A. is supported by CONACYT Grant CB-2017-2018-A1-S-9764.

O.A. thanks Jorge Garza Vargas, Zahkar Kabluchko, Daniel Perales and Noriyoshi Sakuma for fruitful discussions. In particular, we really appreciate Jorge Garza Vargas for pointing out the relation between the combinatorial formulas of Section 3 with the problem solved in Section 6.1 and Noriyoshi Sakuma for proposing working on Theorem \ref{main3}.

The authors are grateful to anonymous referees for very helpful comments to improve the readability of the paper.



\appendix

\section{Free cumulants via the Lagrange inversion theorem}

In this appendix, we compute the free cumulants of the multiplicative free semicircular distribution $\lambda_t$, the free unitary normal distribution $\sigma_t$ and the free unitary Poisson distribution $\Pi_t$ appeared from Sections 5 and 6.
A calculation strategy is the use of the {\it Lagrange inversion theorem} (see, e.g., \cite[p. 148, Theorem A]{Comtet}). 

Recall that, for a probability measure $\mu$ (on $[0,\infty)$ or $\mathbb{T})$ with nonzero first moment, we obtain
\[
R_{\mu}(zS_\mu(z))=z
\]
on a neighborhood of $0$, where $R_\mu(z)$ is the R-transform of $\mu$ and $S_\mu(z)$ is the S-transform of $\mu$, see \cite{BV92, BV93} for details. If $f_\mu(z):=zS_\mu(z)$ is analytic on a neighborhood of $0$ and $f_\mu(0)=0$ and also $f_\mu'(0)\neq 0$, then the Lagrange inversion theorem implies that
\[
\kappa_n(\mu)= \frac{1}{n!}\lim_{z\rightarrow 0} \left(\frac{d}{dz}\right)^{n-1} \left(\frac{z}{f_\mu(z)}\right)^n.
\]

\subsection{Multiplicative free semicircular distribution $\lambda_t$ and free normal distribution $\sigma_t$}

Using the above strategy, we provide proof of the following known results.

\begin{proposition}\label{prop:Cumulant_sigma_t}
\begin{enumerate}[\rm (1)]
\item Let $\lambda_t$ be the multiplicative free semicircular distribution on $[0,\infty)$ for $t \ge 0$.
For $n\in\N$, we have
\begin{align*}
\kappa_n(\lambda_t)= \exp\left(\frac{nt}{2}\right)\frac{(nt)^{n-1}}{n!}.
\end{align*}
\item (see e.g. \cite{DG-PN}) Let $\sigma_t$ be the free normal distribution on $\mathbb{T}$ for $t \ge 0$.
For $n\in\N$, we have
\begin{align*}
\kappa_n(\sigma_t)= \exp\left(-\frac{nt}{2}\right)\frac{(-nt)^{n-1}}{n!}.
\end{align*}
\end{enumerate}
\end{proposition}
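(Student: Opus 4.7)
The plan is to invoke the Lagrange inversion formula quoted at the start of the appendix, which reduces each assertion to an explicit expression for the $S$-transform of the corresponding distribution. Since both $\lambda_t$ and $\sigma_t$ have positive first moment, the formula applies, and the claim will follow once $f_\mu(z):=zS_\mu(z)$ has been identified in closed form.

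The key input is that for both distributions the $S$-transform is a pure exponential in $z$: from Biane's work one has
\[
S_{\lambda_t}(z) \;=\; e^{-t(z+\frac12)}
\qquad\text{and}\qquad
S_{\sigma_t}(z) \;=\; e^{\,t(z+\frac12)}.
\]
Granting these, the remaining computation is automatic, because $z/f_\mu(z) = 1/S_\mu(z)$ is itself an exponential, so $(z/f_\mu(z))^n$ is an exponential in $z$ whose $(n-1)$-th derivative at the origin can be read off with no effort. Concretely, for $\lambda_t$ one has
\[
\left(\frac{z}{f_{\lambda_t}(z)}\right)^n \;=\; e^{nt/2}\, e^{ntz},
\]
so its $(n-1)$-th derivative at $z=0$ equals $e^{nt/2}\,(nt)^{n-1}$, and dividing by $n!$ yields assertion (1). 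For (2) the calculation is identical with $t$ replaced by $-t$ in the exponent, producing the factor $(-nt)^{n-1}$.

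The only genuine obstacle is in justifying the two $S$-transform formulas above. For $\lambda_t$ this is part of Biane's construction of the multiplicative free Brownian motion in \cite{Biane95}; alternatively, one can verify it directly starting from the moment sequence $m_n(\lambda_t)$ displayed just above Theorem \ref{thm:FMS}, by computing the generating series $\psi_{\lambda_t}$ and inverting it using the tree function identity $T(w) = w e^{T(w)}$ (equivalently $T^{-1}(u) = u e^{-u}$), which exactly produces the exponential form of $\chi_{\lambda_t}$ and hence of $S_{\lambda_t}$. The derivation for $\sigma_t$ is entirely parallel with the opposite sign of $t$, and the final formula is consistent with the free cumulant computation in \cite{DG-PN}.
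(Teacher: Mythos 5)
Your proposal is correct and follows essentially the same route as the paper: both apply the Lagrange inversion formula stated at the start of the appendix to $f_\mu(z)=zS_\mu(z)$, plug in the exponential $S$-transforms $S_{\lambda_t}(z)=e^{-t(z+\frac12)}$ and $S_{\sigma_t}(z)=e^{t(z+\frac12)}$ (the paper sources the former to \cite[Lemma 7.1]{BV92}), and read off the $(n-1)$-th derivative of $e^{nt/2}e^{ntz}$ at $z=0$. Your alternative sketch via the moment sequence and the tree function is a reasonable cross-check but is not needed once the $S$-transform formulas are cited.
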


\begin{proof}
We only prove statement (1). Recall that, for any $t \ge 0$, 
\[
S_{\lambda_t}(z)=\exp\left(-t \left(z+\frac{1}{2}\right)\right),
\]
see \cite[Lemma 7.1]{BV92}. Then it is easy to verify that $f_{\lambda_t}(z):=zS_{\lambda_t}(z)$ is analytic on a neighborhood of $0$ and $f_{\lambda_t}(0)=0$ and also $f_{\lambda_t}'(0)=e^{-t/2}\neq 0$. 
The Lagrange inversion theorem implies that
\begin{align*}
\kappa_n(\lambda_t) 
&=\frac{1}{n!} \lim_{z\rightarrow0} \left(\frac{d}{dz}\right)^{n-1} \left(\frac{z}{f_{\lambda_t}(z)}\right)^n\\
&=\frac{1}{n!} \lim_{z\rightarrow0} \left(\frac{d}{dz}\right)^{n-1} \exp\left(nt\left(z+\frac{1}{2}\right)\right)\\
&=\exp\left(\frac{nt}{2}\right) \frac{(nt)^{n-1}}{n!}.
\end{align*}

Recall also that 
\[
S_{\sigma_t}(z)=\exp\left(t \left(z+\frac{1}{2}\right)\right).
\]
Then the proof of statement (2) is similar to that of (1).
\end{proof}

\subsection{Free unitary Poisson distribution $\Pi_t$}

\begin{proposition}\label{prop:Poisson}
For $n\in \N$ and $t>0$, we have
\begin{align} \label{eq:fish}
\kappa_n(\Pi_t)=(-1)^{n-1}2^n e^{-2nt} \sum_{k=1}^{n-1} \frac{(-t)^k}{k!} (2n)^{k-1} \binom{n-2}{k-1}.
\end{align}
\end{proposition}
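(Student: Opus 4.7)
The plan is to mirror the Lagrange-inversion strategy used for $\lambda_t$ and $\sigma_t$ in Proposition~\ref{prop:Cumulant_sigma_t}. The input needed is the $S$-transform of the free unitary Poisson distribution, which is
\[
S_{\Pi_t}(z)=\exp\!\left(\frac{2t}{1+2z}\right);
\]
this can be read off from \cite[Lemma 6.4]{BV92} (see also \cite{K21}), or derived directly from the description of $\Pi_t$ as a free multiplicative convolution limit of the polynomials $L_{d,m}$. Since $S_{\Pi_t}$ is analytic on a neighbourhood of $0$ with $S_{\Pi_t}(0)=e^{2t}\ne 0$, the function $f_{\Pi_t}(z):=zS_{\Pi_t}(z)$ satisfies $f_{\Pi_t}(0)=0$ and $f_{\Pi_t}'(0)=e^{2t}\ne 0$, so the Lagrange inversion theorem stated at the top of this appendix applies and yields
\[
\kappa_n(\Pi_t)=\frac{1}{n!}\lim_{z\to 0}\left(\frac{d}{dz}\right)^{n-1}\!\exp\!\left(-\frac{2nt}{1+2z}\right).
\]

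Next, I would convert this derivative into a power-series coefficient extraction. Writing $-2nt/(1+2z)=-2nt+4ntz/(1+2z)$ factors out the constant $e^{-2nt}$, reducing the problem to computing $[z^{n-1}]\exp(4ntz/(1+2z))$. Expanding the exponential as a Taylor series and then expanding each $(1+2z)^{-k}$ by the binomial series gives
\[
[z^{n-1}]\exp\!\left(\frac{4ntz}{1+2z}\right)=\sum_{k=1}^{n-1}\frac{(4nt)^{k}\,2^{n-1-k}}{k!}\binom{-k}{n-1-k},
\]
where only the range $1\le k\le n-1$ contributes to $z^{n-1}$ when $n\ge 2$.

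Finally, using the identity $\binom{-k}{n-1-k}=(-1)^{n-1-k}\binom{n-2}{k-1}$ and gathering the signs and powers of $2$, this simplifies to
\[
\kappa_n(\Pi_t)=\frac{(-1)^{n-1}2^{n-1}}{n}\,e^{-2nt}\sum_{k=1}^{n-1}\frac{(-2nt)^{k}}{k!}\binom{n-2}{k-1}.
\]
Writing $(-2nt)^{k}=(-t)^{k}(2n)^{k}$ and absorbing one factor of $2n$ into the prefactor recovers exactly \eqref{eq:fish}. The $n=1$ case (where the sum is empty) is automatic from the Lagrange formula, since $\lim_{z\to 0}\exp(-2t/(1+2z))=e^{-2t}$. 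The only real obstacle I anticipate is the careful bookkeeping of signs and powers of $2$; once the $S$-transform is in hand, everything else is a direct application of Lagrange inversion and the binomial theorem.
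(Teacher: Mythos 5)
Your proof is correct, and it begins exactly where the paper does: Lagrange inversion applied to $f_{\Pi_t}(z)=zS_{\Pi_t}(z)$ with the same $S$-transform (your $\exp\left(\frac{2t}{1+2z}\right)$ is literally the paper's $\exp\left(\frac{t}{z+\frac{1}{2}}\right)$), arriving at the same expression \eqref{eq:cumulant_Pi_t}. Where you genuinely diverge is in evaluating $\lim_{z\to 0}\frac{d^{n-1}}{dz^{n-1}}\exp\left(-\frac{2nt}{1+2z}\right)$. The paper applies Fa\'{a} Di Bruno's formula (Lemma \ref{lem:FDB_Formula}), which produces a sum over partitions $\pi\in\calP(n-1)$ weighted by $\prod_{V\in\pi}|V|!$, and then needs the separate combinatorial identity $\sum_{\pi\in\calP(n-1),\,|\pi|=k}\prod_{V\in\pi}|V|!/(n-1)!=\frac{1}{k!}\binom{n-2}{k-1}$, proved via the multinomial count of partitions by block type and \eqref{eq:comb_choose}. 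You instead split the exponent as $-2nt+\frac{4ntz}{1+2z}$, factor out $e^{-2nt}$, and extract $[z^{n-1}]$ directly from the double expansion, using only the binomial series and $\binom{-k}{n-1-k}=(-1)^{n-1-k}\binom{n-2}{k-1}$. I checked the bookkeeping: $(4nt)^k 2^{n-1-k}=2^{n-1}(2nt)^k$ and $\frac{2^{n-1}}{n}(2n)^k=2^n(2n)^{k-1}$, which lands exactly on \eqref{eq:fish}. Your route is more elementary and self-contained, avoiding the partition-counting lemma entirely; the paper's route is longer but consonant with the partition-combinatorics theme of the rest of the paper, and Lemma \ref{lem:FDB_Formula} is reusable elsewhere. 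One small caveat: your remark that the $n=1$ case is ``automatic'' is not quite accurate, since Lagrange inversion gives $\kappa_1(\Pi_t)=e^{-2t}$ while the empty sum in \eqref{eq:fish} gives $0$; so for $n=1$ the formula only holds under the convention the paper adopts in Theorem \ref{thm:Poi} (namely $\kappa_1=e^{-2t}$). This is a defect of the statement itself, shared by the paper, not of your computation for $n\ge 2$.
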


To show this, we use the Lagrange inversion theorem again. According to \cite{K21}, we have
\[
S_{\Pi_t}(z)=\exp\left( \frac{t}{z+\frac{1}{2}}\right), \qquad t>0.
\]
One can see that, $f_{\Pi_t}(z)=zS_{\Pi_t}(z)$ is analytic on a neighborhood of $0$, and $f_{\Pi_t}(0)=0$ and also $f_{\Pi_t}'(0)=e^{2t}\neq 0$. By the Lagrange inversion theorem, we have
\begin{equation}\label{eq:cumulant_Pi_t}
\begin{split}
\kappa_n(\Pi_t)
&=\frac{1}{n!} \lim_{z\rightarrow0}\frac{d^{n-1}}{dz^{n-1}}  \left(\frac{z}{f_{\Pi_t}(z)} \right)^n\\
&=\frac{1}{n!} \lim_{z\rightarrow0}\frac{d^{n-1}}{dz^{n-1}}  \exp\left(\frac{-nt}{z+\frac{1}{2}} \right).
\end{split}
\end{equation}
To compute this, we prepare the following result derived from Fa\'{a} Di Bruno's formula.

\begin{lemma}\label{lem:FDB_Formula}
Let $u$ be an analytic function on $\C$. Then
\begin{align*}
\frac{d^{n}}{dz^{n}}e^{u(z)} =  \left( \sum_{\pi \in \calP(n)} \prod_{V\in \pi} u^{(|V|)}(z) \right) e^{u(z)}.
\end{align*}
\end{lemma}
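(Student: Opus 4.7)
The plan is to prove the formula by induction on $n$, using the standard combinatorial bijection between partitions of $[n+1]$ and partitions of $[n]$ augmented by the placement of the element $n+1$.

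For the base case $n=1$, the identity reduces to $\frac{d}{dz}e^{u(z)} = u'(z)e^{u(z)}$, which matches the right-hand side since $\calP(1)$ consists of the single partition $\{\{1\}\}$, yielding the factor $u^{(1)}(z)$.

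For the inductive step, assume the formula holds at level $n$. Differentiate once more and use the product rule on $\left(\sum_{\pi \in \calP(n)} \prod_{V \in \pi} u^{(|V|)}(z)\right) e^{u(z)}$. This produces two contributions: first, from differentiating $e^{u(z)}$, a term
\[
\left(\sum_{\pi \in \calP(n)} u'(z) \prod_{V\in \pi} u^{(|V|)}(z)\right) e^{u(z)},
\]
and second, from differentiating each product $\prod_{V \in \pi} u^{(|V|)}(z)$, a term
\[
\left(\sum_{\pi \in \calP(n)} \sum_{W \in \pi} u^{(|W|+1)}(z) \prod_{V\in \pi,\, V\neq W} u^{(|V|)}(z)\right) e^{u(z)}.
\]

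The key combinatorial step is to identify the sum of these two contributions with $\left(\sum_{\sigma \in \calP(n+1)} \prod_{V\in \sigma} u^{(|V|)}(z)\right) e^{u(z)}$. For this, partition $\calP(n+1)$ according to the block containing $n+1$: either $n+1$ is a singleton, in which case removing it yields a unique $\pi \in \calP(n)$ (contributing the $u'(z)$ factor and matching the first term); or $n+1$ belongs to a block of size at least two, in which case removing $n+1$ from its block yields a unique $\pi \in \calP(n)$ together with a distinguished block $W \in \pi$ whose derivative order is increased by one (matching the second term). This bijection is straightforward to verify and completes the induction.

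I expect no significant obstacle; the only subtlety is bookkeeping in the bijection, which is routine. An alternative, equally clean route would be to expand $u$ in a Taylor series around a fixed point $z_0$, write $e^{u(z)} = e^{u(z_0)} \exp\bigl(\sum_{k\ge 1} \tfrac{u^{(k)}(z_0)}{k!}(z-z_0)^k\bigr)$, and apply the exponential formula for formal power series to read off the coefficient of $(z-z_0)^n/n!$; but the inductive argument is more self-contained and is what I would present.
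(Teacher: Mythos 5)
Your proof is correct, but it takes a different route from the paper. The paper simply invokes Fa\'{a} Di Bruno's formula
\[
\frac{d^n}{dz^n} f(g(z))=\sum_{\pi \in \calP(n)} f^{(|\pi|)} (g(z)) \cdot \prod_{V\in \pi} g^{(|V|)}(z),
\]
specializes to $f(z)=e^z$ and $g=u$, and observes that $f^{(|\pi|)}(z)=e^z$ kills the dependence on $|\pi|$ -- a two-line derivation, at the cost of taking the general partition form of Fa\'{a} Di Bruno as a black box. You instead prove the identity from scratch by induction on $n$, using the standard bijection between $\calP(n+1)$ and pairs consisting of $\pi\in\calP(n)$ together with a choice of where to place $n+1$ (new singleton block, contributing the $u'(z)$ factor from differentiating $e^{u(z)}$, or insertion into an existing block $W$, raising its derivative order to $u^{(|W|+1)}$). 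This bijection argument is exactly right, and in fact it is the usual inductive proof of the partition form of Fa\'{a} Di Bruno itself, so your argument effectively reproves the special case the paper cites. What your approach buys is self-containedness and an elementary verification; what the paper's approach buys is brevity and a pointer to the general formula, which it states explicitly before specializing. Your alternative suggestion via the exponential formula for formal power series would also work, but as you note, the induction is the cleaner self-contained presentation.
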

\begin{proof}
According to Fa\'{a} Di Bruno's formula, for analytic functions $f$ and $g$, we have
\begin{align*}
\frac{d^n}{dz^n} f(g(z))=\sum_{\pi \in \calP(n)} f^{(|\pi|)} (g(z)) \cdot \prod_{V\in \pi} g^{(|V|)}(z).
\end{align*}
Taking $f(z)=e^z$ and $g(z)=u(z)$, and observing that $f^{(|\pi|)}(z)=e^z$, the desired result follows.
\end{proof}


\begin{proof}{\bf (Proposition \ref{prop:Poisson})}\ 
By \eqref{eq:cumulant_Pi_t} and Lemma \ref{lem:FDB_Formula}, we obatin
\begin{align*}
\kappa_n(\Pi_t)
&=\frac{1}{n!} (-2)^{n-1} e^{-2nt} \left(\sum_{\pi\in \calP(n-1)} (-2nt)^{|\pi|} \prod_{V\in \pi} |V|!\right)\\
&=\frac{1}{n!} (-2)^{n-1} e^{-2nt} \left(\sum_{k=1}^{n-1} (-2nt)^{k}\sum_{\substack{\pi\in \calP(n-1)\\ |\pi|=k}}  \prod_{V\in \pi} |V|!\right)\\
&=(-1)^{n-1}2^ne^{-2nt} \left(\sum_{k=1}^{n-1} (-t)^k (2n)^{k-1}\sum_{\substack{\pi\in \calP(n-1)\\ |\pi|=k}} \frac{\prod_{V\in \pi} |V|!}{(n-1)!}\right).
\end{align*}
Hence, it is enough to prove 
$$
\sum_{\substack{\pi\in \calP(n-1)\\ |\pi|=k}} \frac{\prod_{V\in \pi} |V|!}{(n-1)!} = \frac{1}{k!}\binom{n-2}{k-1}
$$
by comparing with \eqref{eq:fish}.
Note that an elementary combinatorial argument shows that
\begin{align}\label{eq:comb_choose}
\sum_{\substack{p_1,\dots, p_{n-1}\ge0\\ p_1+\cdots + (n-1)p_{n-1}=n-1 \\ p_1+\cdots+p_{n-1}=k}} \binom{k}{p_1,\dots, p_{n-1}}=\binom{n-2}{k-1}, \qquad 1\le k \le n-1.
\end{align}
Thus, since the number of partitions with $p_1$ blocks of size $1$, $p_2$ blocks of size $2$,$\dots$, $p_n$ blocks of size $n$ is equal to
\begin{align*}
 \frac{n!}{p_1! p_2! \cdots p_n! (1!)^{p_1} (2!)^{p_2}\cdots(n!)^{p_n}},
\end{align*}
we have
\begin{align*}
\sum_{\substack{\pi\in \calP(n-1)\\ |\pi|=k}} \frac{\prod_{V\in \pi} |V|!}{(n-1)!}
&=\frac{1}{k!}\sum_{\substack{p_1,\dots, p_{n-1}\ge0\\ p_1+\cdots + (n-1)p_{n-1}=n-1 \\ p_1+\cdots+p_{n-1}=k}} \binom{k}{p_1,\dots, p_{n-1}}\\
&=\frac{1}{k!}\binom{n-2}{k-1},
\end{align*}
where the last equality follows from \eqref{eq:comb_choose}.
\end{proof}




\begin{thebibliography}{99}
 \bibitem{AGVP} O. Arizmendi, J. Garza-Vargas and D. Perales, Finite Free Cumulants: Multiplicative Convolutions, Genus Expansion and Infinitesimal Distributions. arXiv:2108.08489.
  \bibitem{AP} O. Arizmendi and D. Perales, Cumulants for finite free convolution. J. Combin. Theory Ser. A \textbf{ 155} (2018), 244--266.
  \bibitem{AV} O. Arizmendi and C. Vargas, Products of free random variables and $k$-divisible non-crossing partitions. Electron. Commun. Probab. \textbf{ 17} (2012),  1--13.
  \bibitem{BV92} H. Bercovici and D. Voiculescu. L\'{e}vy-Hincin type theorems for multiplicative and additive free convolution. Pacific J. Math., \textbf{ 153}, no.2, (1992), 217--248.
  \bibitem{BV93} H. Bercovici and D. Voiculescu, Free convolution of measures with unbounded support. Indiana Univ. Math. J. \textbf{ 42} (1993), 733--773.
  \bibitem{Biane95} P. Biane. Free Brownian motion, free stochastic calculus and random matrices. In Free probability theory (Waterloo, ON, 1995), volume \textbf{ 12} of Fields Inst. Commun., Pages 1--19. Amer. Math. Soc., Providence, RI.
  \bibitem{Biane97} P. Biane. Segal-Bargmann transform, functional calculus on matrix spaces and the theory of semi-circular and circular systems. J. Funct. Anal., \textbf{ 144}, no.1, (1997), 232--286. 
  \bibitem{Cebron} G. C\'{e}bron. Matricial model for the free multiplicative convolution. Ann. Probab., \textbf{ 44}, no.4 (2016), 2427--2478.
  \bibitem{Comtet} L. Comtet. {\it Advanced combinatorics.} D. Reidel Publishing Co., Dordrecht, enlarged edition, 1974. The art of finite and infinite expansions.
  \bibitem{DG-PN} N. Demni, M. Guay-Paquet and A. Nica. Star-cumulants of free unitary Brownian motion. Adv. Appl. Math. \textbf{ 69} (2015), 1--45.
    \bibitem{FU} K. Fujie and Y. Ueda, Law of large numbers for finite free multiplicative convolution of polynomials. SIGMA \textbf{ 19} (2023), 004, 11 pages.
    \bibitem{HM} U. Haagerup and S. M\"{o}ller.
Tha law of large numbers for the free multiplicative convolution.
Operator algebra and dynamics, 157--186, Springer Proc. Math. Stat., \textbf{ 58}, Springer, Heidelberg, 2013.
  \bibitem{HLP34} G. H. Hardy, J. E. Littlewood and G. Po\'{l}ya. Inequalities. Cambridge at the university press. 1934.
  \bibitem{K21} Z. Kabluchko, Repeated differentiation and free unitary Poisson processes, arXiv:2112.14729.
  \bibitem{K22} Z. Kabluchko, Lee-Yang zeroes of the Curie-Weiss ferromagnet, unitary Hermite polynomials, and the backward heat flow, arXiv:2203.05533.
\bibitem{LS59} V. P. Leonov and A. N. Sirjaev. On a method of semi-invariants. Theor. Probability Appl., 4:319–329, 1959.
  \bibitem{M92} H. Maassen, Addition of freely independent random variables. J. Funct. Anal. \textbf{ 106} (1992), 409--438.
  \bibitem{Marcus} A. W. Marcus, Polynomial convolutions and (finite) free probability. arXiv:2108.07054.
  \bibitem{MSS22} A. W. Marcus, D. A. Spielman and N. Srivastava, Finite free convolutions of polynomials. Probab. Theory Relat. Fields \textbf{ 182} (2022), no. 3--4, 807--848.
  \bibitem{Mar66} M. Marden, Geometry of polynomials, 2nd edn. Number 3. American Mathematical Society, 1966.
  \bibitem{MS} J. A. Mingo and R. Speicher, Free probability and random matrices, Fields Institute Monographs, vol. \textbf{ 35}, Springer, New York; Fields Institute for Research in Mathematical Sciences, Toronto, ON, 2017.
  \bibitem{M} B. Mirabelli, Hermitian, Non-Hermitian and Multivariate Finite Free Probability. 2021. URL https://dataspace.princeton.edu/handle/88435/dsp019593tz21r. PhD Thesis, Princeton University.
  \bibitem{NS} A. Nica and R. Speicher, Lectures on the Combinatorics of Free Probability Theory, London Mathematical Society Lecture Note Series, vol. \textbf{ 335}, Cambridge University Press, Cambridge, 2006.
  \bibitem{SY} N. Sakuma and H. Yoshida, New limit theorems related to free multiplicative convolution. Studia Math. \textbf{ 214} (2013), no.3, 251--264. 
  \bibitem{Sz22} G. Sze\"{g}o. Bemerkungen zu einem Satz von J. H. Grace \"{u}ber die Wurzeln algebraischer Gleichungen, Math. Z. \textbf{ 13} (1), 28--55, 1922.
  \bibitem{Tho13} E. G.F. Thomas, A polarization identity for multilinear maps, Indagationes Mathematicae \textbf{ 25}, Issue 3 (2014), 468--474
  \bibitem{VDN92} D. Voiculescu, K. J. Dykema and A. Nica, Free random variables. Number 1. American Mathematical Soc., 1992.
  \bibitem{V86} D. Voiculescu, Addition of certain noncommuting random variables. J. Funct. Anal. \textbf{ 66} (1986), 323--346 .
  \bibitem{Yancey} J. Yancey, Edge-labelled trees, Pi Mu Epsilon Journal, Vol. 8, No. 2 (1985), 96--102.
  \bibitem{Z14} P. Zhong. Free Brownian motion and free convolution semigroups: multiplicative case. Pacific J. Math., \textbf{ 269}, no.1 (2014), 219--256.
\bibitem{Z15} P. Zhong. On the free convolution with a free multiplicative analogue of the normal distribution.
J. Theoret. Probab., \textbf{ 28}, no.4 (2015), 1354--1379.
\end{thebibliography}
\end{document}